\numberwithin{equation}{section}
\begin{document}
\title{Weighted estimates for vector-valued intrinsic square functions and commutators in the Morrey type spaces}
\author{Hua Wang \footnote{E-mail address: wanghua@pku.edu.cn.}\\
\footnotesize{College of Mathematics and Econometrics, Hunan University, Changsha 410082, P. R. China}}
\date{}
\maketitle

\begin{abstract}
In this paper, the boundedness properties of vector-valued intrinsic square functions and their vector-valued commutators with $BMO(\mathbb R^n)$ functions are discussed. We first show the weighted strong type and weak type estimates of vector-valued intrinsic square functions in the Morrey type spaces. Then we obtain weighted strong type estimates of vector-valued analogues of commutators in Morrey type spaces. In the endpoint case, we establish the weighted weak $L\log L$-type estimates for these vector-valued commutators in the setting of weighted Lebesgue spaces. Furthermore, we prove weighted endpoint estimates of these commutator operators in Morrey type spaces. In particular, we can obtain strong type and endpoint estimates of vector-valued intrinsic square functions and their commutators in the weighted Morrey spaces and the generalized Morrey spaces.\\
MSC(2010): 42B25; 42B35\\
Keywords: Vector-valued intrinsic square functions; Morrey type spaces; vector-valued commutators; $A_p$ weights
\end{abstract}

\section{Introduction}

The intrinsic square functions were first introduced by Wilson in \cite{wilson1,wilson2}; they are defined as follows. For $0<\alpha\le1$, let ${\mathcal C}_\alpha$ be the family of functions $\varphi:\mathbb R^n\longmapsto\mathbb R$ such that $\varphi$'s support is contained in $\{x\in\mathbb R^n: |x|\le1\}$, $\int_{\mathbb R^n}\varphi(x)\,dx=0$, and for all $x, x'\in \mathbb R^n$,
\begin{equation*}
\big|\varphi(x)-\varphi(x')\big|\leq \big|x-x'\big|^{\alpha}.
\end{equation*}
For $(y,t)\in {\mathbb R}^{n+1}_{+}=\mathbb R^n\times(0,+\infty)$ and $f\in L^1_{{loc}}(\mathbb R^n)$, we set
\begin{equation*}
A_\alpha(f)(y,t)=\sup_{\varphi\in{\mathcal C}_\alpha}\big|\varphi_t*f(y)\big|=\sup_{\varphi\in{\mathcal C}_\alpha}\bigg|\int_{\mathbb R^n}\varphi_t(y-z)f(z)\,dz\bigg|,
\end{equation*}
where $\varphi_t$ denotes the usual $L^1$ dilation of $\varphi:\varphi_t(y)=t^{-n}\varphi(y/t)$. Then we define the intrinsic square function of $f$ (of order $\alpha$) by the formula
\begin{equation}
\mathcal S_{\alpha}(f)(x):=\left(\iint_{\Gamma(x)}\Big(A_\alpha(f)(y,t)\Big)^2\frac{dydt}{t^{n+1}}\right)^{1/2},
\end{equation}
where $\Gamma(x)$ denotes the usual cone of aperture one:
\begin{equation*}
\Gamma(x):=\big\{(y,t)\in{\mathbb R}^{n+1}_+:|x-y|<t\big\}.
\end{equation*}
This new function is independent of any particular kernel, and it dominates pointwise the classical square function~(Lusin area integral) and its real-variable generalizations, one can see more details in \cite{wilson1,wilson2}. Let $\vec{f}=(f_1,f_2,\ldots)$ be a sequence of locally integrable functions on $\mathbb R^n$. For any $x\in\mathbb R^n$, Wilson \cite{wilson2} also defined the vector-valued intrinsic square functions of $\vec{f}$ by
\begin{equation}\label{vectorvalued}
\mathcal S_\alpha(\vec{f})(x):=\bigg(\sum_{j=1}^\infty\big|\mathcal S_\alpha(f_j)(x)\big|^2\bigg)^{1/2}.
\end{equation}

In \cite{wilson2}, Wilson has established the following two results.

\newtheorem*{thma}{Theorem A}

\begin{thma}[\cite{wilson2}]
Let $0<\alpha\leq 1$, $1<p<\infty$ and $w\in A_p$(\mbox{Muckenhoupt weight class}). Then there exists a constant $C>0$ independent of $\vec{f}=(f_1,f_2,\ldots)$ such that
\begin{equation*}
\bigg\|\bigg(\sum_{j}\big|\mathcal S_\alpha(f_j)\big|^2\bigg)^{1/2}\bigg\|_{L^p_w}
\leq C \bigg\|\bigg(\sum_{j}\big|f_j\big|^2\bigg)^{1/2}\bigg\|_{L^p_w}.
\end{equation*}
\end{thma}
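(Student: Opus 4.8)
The plan is to reduce to a scalar estimate and then pass to the $\ell^{2}$-valued one. I would first establish the scalar weighted bound $\|\mathcal S_\alpha(f)\|_{L^p_w}\le C\|f\|_{L^p_w}$ for every $1<p<\infty$ and every $w\in A_p$, and then upgrade it either (i) by running the good-$\lambda$ method directly in $\ell^{2}$-valued form and invoking the Fefferman--Stein weighted vector-valued maximal inequality, or (ii) by applying the vector-valued form of the Rubio de Francia extrapolation theorem with exponent $q=2$ (for which knowing the scalar bound for a single $p$ and all $w\in A_p$ already suffices). I describe route (i).

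For $\varphi\in\mathcal C_\alpha$ the conditions $\operatorname{supp}\varphi\subset\{|x|\le1\}$, $\int\varphi=0$ and the Hölder bound force $\|\varphi\|_\infty\le C_{n,\alpha}$, hence $\|\varphi\|_1\le C_{n,\alpha}$, so that $A_\alpha(f)(y,t)\le C\,Mf(y)$, where $M$ is the Hardy--Littlewood maximal operator. The unweighted bound $\|\mathcal S_\alpha(f)\|_{L^2}\le C\|f\|_{L^2}$ is obtained by fixing one smooth $\psi\in\mathcal C_\alpha$ realizing a Calderón reproducing formula, writing $\varphi_t*f=\int_0^\infty(\varphi_t*\psi_s)*(\psi_s*f)\,\frac{ds}{s}$, and using the \emph{uniform} kernel bound $\|\varphi_t*\psi_s\|_1\le C\min(t/s,s/t)^\alpha$ — valid for every $\varphi\in\mathcal C_\alpha$, and this is where the mean-zero and Hölder conditions enter; a Schur/Minkowski estimate then dominates $\iint A_\alpha(f)(y,t)^2\,\frac{dy\,dt}{t}$ by $C\iint|M(\psi_s*f)(y)|^2\,\frac{dy\,ds}{s}\lesssim\iint|\psi_s*f(y)|^2\,\frac{dy\,ds}{s}\approx\|f\|_{L^2}^2$, and the Fubini identity $\int_{\mathbb R^n}\mathcal S_\alpha(f)(x)^2\,dx=c_n\iint A_\alpha(f)(y,t)^2\,\frac{dy\,dt}{t}$ concludes. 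The same reasoning localizes: if $\mathcal S_{\alpha,Q}$ denotes the intrinsic square function built from the truncated cone $\{(y,t)\in\Gamma(x):t<\ell(Q)\}$, then $\|\mathcal S_{\alpha,Q}(f)\|_{L^2}\le C\|f\|_{L^2}$ with $C$ independent of the cube $Q$.

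For a weight $w\in A_\infty$ I would then prove the good-$\lambda$ inequality
\[
w\Big(\big\{x:\mathcal S_\alpha(\vec{f})(x)>2\lambda,\ \big(\textstyle\sum_{j}(Mf_j)(x)^2\big)^{1/2}\le\gamma\lambda\big\}\Big)\le C\gamma^{2}\,w\big(\big\{x:\mathcal S_\alpha(\vec{f})(x)>\lambda\big\}\big)
\]
by the standard Whitney-cube argument: decompose the open set $\{\mathcal S_\alpha(\vec{f})>\lambda\}$ into Whitney cubes $Q$, split $\vec{f}=\vec{f}\chi_{2Q}+\vec{f}\chi_{(2Q)^c}$ on each, note that for $x\in Q$ the far part only involves scales $t\gtrsim\ell(Q)$ (otherwise $\varphi_t(y-z)=0$ for $z\notin 2Q$), so the Hölder continuity of the functions in $\mathcal C_\alpha$ yields the oscillation bound $|\mathcal S_\alpha(\vec{f}\chi_{(2Q)^c})(x)-\mathcal S_\alpha(\vec{f}\chi_{(2Q)^c})(x')|\lesssim(\sum_j(Mf_j)(x')^2)^{1/2}$ for $x,x'\in Q$ (the factor $(\ell(Q)/t)^\alpha$ being integrable in $t$ and everything being summed in $\ell^2$ over $j$), while the near part is controlled by the localized $L^2$ bound above applied componentwise and summed in $\ell^2$, which produces the gain $\gamma^{2}$. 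After the usual reduction to $\vec{f}$ with finitely many nonzero, bounded, compactly supported entries (so that $\mathcal S_\alpha(\vec{f})\in L^p_w$ a priori), integrating the good-$\lambda$ inequality against $\lambda^{p-1}\,d\lambda$ gives $\|\mathcal S_\alpha(\vec{f})\|_{L^p_w}\le C\big\|(\sum_j(Mf_j)^2)^{1/2}\big\|_{L^p_w}$ for all $1<p<\infty$ and $w\in A_p$. Finally, the Fefferman--Stein weighted vector-valued maximal inequality $\big\|(\sum_j(Mf_j)^2)^{1/2}\big\|_{L^p_w}\le C\big\|(\sum_j|f_j|^2)^{1/2}\big\|_{L^p_w}$ ($1<p<\infty$, $w\in A_p$) gives Theorem~A.

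The main obstacle lies in the $L^2$ theory and the good-$\lambda$ step: because $A_\alpha$ involves a supremum over the non-linear family $\mathcal C_\alpha$, no Fourier/Plancherel argument applies directly and the available cancellation must be extracted ``by hand'' through the uniform kernel estimate $\|\varphi_t*\psi_s\|_1\lesssim\min(t/s,s/t)^\alpha$; moreover, carrying the good-$\lambda$ argument out in $\ell^2$-valued form requires the tail/oscillation estimate to be summable in $j$, which is true but must be arranged with care. If one prefers to avoid a vector-valued good-$\lambda$, route (ii) is available: once the scalar bound of the previous steps is known for one pair (say $p=2$, all $w\in A_2$), the vector-valued extrapolation theorem yields the $\ell^2$-valued inequality for all $1<p<\infty$ and $w\in A_p$ at once.
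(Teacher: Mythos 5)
The paper does not prove Theorem~A: it is quoted verbatim from Wilson's book \cite{wilson2} and used as a black box, so there is no in-paper argument to compare against. Your outline is therefore being judged on its own terms. The overall architecture is sound and is one of the standard ways to prove such results, and your route~(ii) via Rubio de Francia extrapolation (from the scalar weighted bound for a single exponent to the $\ell^2$-valued bound for all $1<p<\infty$, $w\in A_p$) is correct and the cleanest way to obtain the vector-valued conclusion once the scalar theorem is in hand. The unweighted $L^2$ step is also right: the uniform kernel estimate $\|\varphi_t*\psi_s\|_1\lesssim\min(t/s,s/t)^\alpha$ does follow from the cancellation and the H\"older condition, and combined with a Calder\'on reproducing formula and the Schur test it gives $\|\mathcal S_\alpha f\|_{L^2}\lesssim\|f\|_{L^2}$.

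There is, however, a genuine gap in route~(i) as written, in the good-$\lambda$ step. You claim the near part produces the gain $\gamma^{2}$ by ``the localized $L^2$ bound applied componentwise.'' That requires $\int_{BQ}\sum_j|f_j|^2\,dz\lesssim|Q|(\gamma\lambda)^2$, i.e.\ control of the $L^2$-average of $\|\vec f\|_{\ell^2}$ over $BQ$. But the hypothesis in the good-$\lambda$ set is $\big(\sum_j(Mf_j)(x)^2\big)^{1/2}\le\gamma\lambda$, which (like $M(\|\vec f\|_{\ell^2})$) only controls $L^1$-averages of $\|\vec f\|_{\ell^2}$, not $L^2$-averages; replacing $M$ by the $L^2$-maximal operator $M_2$ would repair the Chebyshev step but destroy the final Fefferman--Stein step for $p\le 2$ (the $\ell^2$-valued inequality for $M_2$ fails there). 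The correct fix is standard: use a weak $(1,1)$ estimate for the vector-valued square function on the near part (this is precisely the paper's Theorem~\r{B} with $w\equiv1$, and it has to be established first, e.g.\ from the $L^2$ bound and the Calder\'on--Zygmund decomposition), which gives $\big|\{x\in Q:\mathcal S_\alpha(\vec f_0)(x)>\lambda/4\}\big|\le C\gamma|Q|$, and then the $A_\infty$ property converts $C\gamma|Q|$ into $C\gamma^{\delta}w(Q)$. A gain of $\gamma^{\delta}$ (rather than $\gamma^{2}$) is all the integrated good-$\lambda$ argument needs, so the theorem still follows, but the exponent you asserted is not what the argument produces. Two further points deserve more than a parenthetical: the oscillation estimate for the far part is purely geometric (it comes from the symmetric difference of the cones $\Gamma(x)$ and $\Gamma(x')$ at scales $t\gtrsim\ell(Q)$ together with the pointwise bound $A_\alpha(\vec f)(y,t)\lesssim M_{\ell^2}\vec f$), not from the H\"older condition on $\varphi$ as your phrasing suggests; and the a priori finiteness of $\|\mathcal S_\alpha(\vec f)\|_{L^p_w}$ for bounded compactly supported data, needed for the absorption, relies on the decay $\mathcal S_\alpha(f)(x)\lesssim|x|^{-n}$ together with the openness of the $A_p$ classes, and should not be dismissed as automatic.
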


\newtheorem*{thmb'}{Theorem \r{B}}

\begin{thmb'}[\cite{wilson2}]
Let $0<\alpha\leq 1$ and $p=1$. Then for any given weight function $w$ and $\lambda>0$, there exists a constant $C>0$ independent of $\vec{f}=(f_1,f_2,\ldots)$ and $\lambda>0$ such that
\begin{equation*}
w\bigg(\bigg\{x\in\mathbb R^n:\bigg(\sum_{j}\big|\mathcal S_{\alpha}(f_j)(x)\big|^2\bigg)^{1/2}>\lambda\bigg\}\bigg)
\leq \frac{C}{\lambda}\int_{\mathbb R^n}\bigg(\sum_{j}\big|f_j(x)\big|^2\bigg)^{1/2}M(w)(x)\,dx,
\end{equation*}
where $M$ denotes the standard Hardy--Littlewood maximal operator.
\end{thmb'}

If we take $w\in A_1$, then $M(w)(x)\le C\cdot w(x)$ for a.e.$x\in\mathbb R^n$ by the definition of $A_1$ weight (see Section 2). Hence, as a straightforward consequence of Theorem \r{B}, we obtain

\newtheorem*{thmb}{Theorem B}

\begin{thmb}
Let $0<\alpha\leq 1$, $p=1$ and $w\in A_1$. Then there exists a constant $C>0$ independent of $\vec{f}=(f_1,f_2,\ldots)$ such that
\begin{equation*}
\bigg\|\bigg(\sum_{j}\big|\mathcal S_{\alpha}(f_j)\big|^2\bigg)^{1/2}\bigg\|_{WL^1_w}
\leq C \bigg\|\bigg(\sum_{j}\big|f_j\big|^2\bigg)^{1/2}\bigg\|_{L^1_w}.
\end{equation*}
\end{thmb}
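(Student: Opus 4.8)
The plan is to obtain Theorem B as an immediate corollary of Theorem \r{B} together with the defining property of the Muckenhoupt class $A_1$. Recall (see Section 2) that $w\in A_1$ is equivalent to the pointwise inequality $M(w)(x)\le C_w\,w(x)$ for a.e.\ $x\in\mathbb R^n$, where $C_w>0$ depends only on the $A_1$ constant of $w$ and, in particular, is independent of $\vec f$. Since Theorem \r{B} already supplies a weak-type bound for $\mathcal S_\alpha(\vec f)$ against the auxiliary weight $M(w)$, all that remains is to absorb $M(w)$ into $w$ at the cost of the constant $C_w$.

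Concretely, I would first apply Theorem \r{B} to the sequence $\vec f=(f_1,f_2,\ldots)$ with the given weight $w$ and an arbitrary $\lambda>0$, which gives
\[
w\bigg(\bigg\{x\in\mathbb R^n:\Big(\sum_j|\mathcal S_\alpha(f_j)(x)|^2\Big)^{1/2}>\lambda\bigg\}\bigg)
\le\frac{C}{\lambda}\int_{\mathbb R^n}\Big(\sum_j|f_j(x)|^2\Big)^{1/2}M(w)(x)\,dx.
\]
Next I would insert the $A_1$ bound $M(w)(x)\le C_w\,w(x)$ into the integral on the right, obtaining the same estimate with $M(w)$ replaced by $w$ and $C$ replaced by $C\cdot C_w$. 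Finally, multiplying through by $\lambda$ and taking the supremum over $\lambda>0$ turns the left-hand side into $\big\|(\sum_j|\mathcal S_\alpha(f_j)|^2)^{1/2}\big\|_{WL^1_w}$ and the right-hand side into a fixed constant multiple of $\big\|(\sum_j|f_j|^2)^{1/2}\big\|_{L^1_w}$, which is exactly the asserted inequality.

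There is no genuine obstacle in this argument: the entire analytic content lies in Theorem \r{B}, and the passage to $w\in A_1$ costs only the elementary inequality $M(w)\lesssim w$. The one point worth stating explicitly is that the final constant $C'=C\cdot C_w$ is independent of $\vec f$, which is clear because the constant $C$ furnished by Theorem \r{B} and the $A_1$ constant $C_w$ are both independent of $\vec f$. (If one wished to avoid quoting the equivalence $w\in A_1\Leftrightarrow M(w)\lesssim w$, one could argue instead from the cube estimate $\frac{1}{|Q|}\int_Q w\le C_w\inf_{x\in Q}w(x)$, but this merely reproduces the same two-line computation.)
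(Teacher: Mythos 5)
Your proposal is correct and coincides exactly with the paper's own derivation: the paper also deduces Theorem B directly from Theorem \r{B} by invoking the pointwise bound $M(w)(x)\le C\,w(x)$ that characterizes $w\in A_1$, and then absorbing $M(w)$ into $w$ inside the integral. No further comment is needed.
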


Let $b$ be a locally integrable function on $\mathbb R^n$ and $0<\alpha\leq 1$, the commutators generated by $b$ and intrinsic square functions $\mathcal S_{\alpha}$ are defined by the following expression in \cite{wang1}.
\begin{equation}
\big[b,\mathcal S_\alpha\big](f)(x):=\left(\iint_{\Gamma(x)}\sup_{\varphi\in{\mathcal C}_\alpha}\bigg|\int_{\mathbb R^n}\big[b(x)-b(z)\big]\varphi_t(y-z)f(z)\,dz\bigg|^2\frac{dydt}{t^{n+1}}\right)^{1/2}.
\end{equation}

In this paper, we will consider the vector-valued analogues of these commutator operators. Let $\vec{f}=(f_1,f_2,\ldots)$ be a sequence of locally integrable functions on $\mathbb R^n$. For any $x\in\mathbb R^n$, in the same way, we can define the commutators for vector-valued intrinsic square functions of $\vec{f}$ as
\begin{equation}\label{vectorvaluedc}
\big[b,\mathcal S_\alpha\big](\vec{f})(x):=\bigg(\sum_{j=1}^\infty\big|\big[b,\mathcal S_\alpha\big](f_j)(x)\big|^2\bigg)^{1/2}.
\end{equation}

On the other hand, the classical Morrey space was originally introduced by Morrey in \cite{morrey} to study the local behavior of solutions to second order elliptic partial differential equations. Since then, this space played an important role in studying the regularity of solutions to partial differential equations. In \cite{mizuhara}, Mizuhara introduced the generalized Morrey space $L^{p,\Theta}(\mathbb R^n)$ which was later extended and studied by many authors. In \cite{komori}, Komori and Shirai defined a version of the weighted Morrey space $L^{p,\kappa}(w)$ which is a natural generalization of weighted Lebesgue space. Recently, in \cite{wang2}, we have established the strong type and weak type estimates for vector-valued intrinsic square functions on $L^{p,\Theta}(\mathbb R^n)$ and $L^{p,\kappa}(w)$.

The main purpose of this paper is twofold. We first define a new kind of Morrey type spaces $\mathcal M^{p,\theta}(w)$ containing generalized Morrey space $L^{p,\Theta}(\mathbb R^n)$ and weighted Morrey space $L^{p,\kappa}(w)$ as special cases, and then we will discuss the boundedness properties of vector-valued intrinsic square functions \eqref{vectorvalued} and vector-valued commutators \eqref{vectorvaluedc} defined above in these Morrey type spaces $\mathcal M^{p,\theta}(w)$ for all $1\leq p<\infty$.

\section{Main results}

\subsection{Notations and preliminaries}

A weight $w$ will always mean a positive function which is locally integrable on $\mathbb R^n$, $B=B(x_0,r_B)=\{x\in\mathbb R^n:|x-x_0|<r_B\}$ denotes the open ball centered at $x_0$ and with radius $r_B>0$. For $1<p<\infty$, a weight function $w$ is said to belong to the Muckenhoupt's class $A_p$, if there is a constant $C>0$ such that for every ball $B\subseteq \mathbb R^n$~(see \cite{garcia,muckenhoupt}),
\begin{equation*}
\left(\frac1{|B|}\int_B w(x)\,dx\right)^{1/p}\left(\frac1{|B|}\int_B w(x)^{-p'/p}\,dx\right)^{1/{p'}}\leq C,
\end{equation*}
where $p'$ is the dual of $p$ such that $1/p+1/{p'}=1$. For the case $p=1$, $w\in A_1$, if there is a constant $C>0$ such that for every ball $B\subseteq \mathbb R^n$,
\begin{equation*}
\frac1{|B|}\int_B w(x)\,dx\le C\cdot\underset{x\in B}{\mbox{ess\,inf}}\;w(x).
\end{equation*}
We also define $A_\infty=\bigcup_{1\leq p<\infty}A_p$. It is well known that if $w\in A_p$ with $1\leq p<\infty$, then for any ball $B$, there exists an absolute constant $C>0$ such that
\begin{equation}\label{weights}
w(2B)\le C\cdot w(B).
\end{equation}
In general, for $w\in A_1$ and any $j\in\mathbb Z_+$, there exists an absolute constant $C>0$ such that (see \cite{garcia})
\begin{equation}\label{general weights}
w\big(2^j B\big)\le C\cdot 2^{jn}w(B).
\end{equation}
Moreover, if $w\in A_\infty$, then for all balls $B$ and all measurable subsets $E$ of $B$, there exists a number $\delta>0$ independent of $E$ and $B$ such that (see \cite{garcia})
\begin{equation}\label{compare}
\frac{w(E)}{w(B)}\le C\left(\frac{|E|}{|B|}\right)^\delta.
\end{equation}
A weight function $w$ is said to belong to the reverse H\"{o}lder class $RH_r$, if there exist two constants $r>1$ and
$C>0$ such that the following reverse H\"{o}lder inequality holds for every
ball $B\subseteq \mathbb R^n$.
\begin{equation*}
\left(\frac{1}{|B|}\int_B w(x)^r\,dx\right)^{1/r}\le C\left(\frac{1}{|B|}\int_B w(x)\,dx\right).
\end{equation*}
Given a ball $B$ and $\lambda>0$, $\lambda B$ denotes the ball with the same center as $B$ whose radius is $\lambda$ times that of $B$. For a given weight function $w$ and a measurable set $E$, we also denote the Lebesgue measure of $E$ by $|E|$ and the weighted measure of $E$ by $w(E)$, where $w(E)=\int_E w(x)\,dx$. Equivalently, we could define the above notions with cubes
instead of balls. Hence we shall use these two different definitions appropriate to calculations.

Given a weight function $w$ on $\mathbb R^n$, for $1\leq p<\infty$, the weighted Lebesgue space $L^p_w(\mathbb R^n)$ is defined as the set of all functions $f$ such that
\begin{equation*}
\big\|f\big\|_{L^p_w}:=\bigg(\int_{\mathbb R^n}|f(x)|^pw(x)\,dx\bigg)^{1/p}<\infty.
\end{equation*}
We also denote by $WL^1_w(\mathbb R^n)$ the weighted weak Lebesgue space consisting of all measurable functions $f$ such that
\begin{equation*}
\big\|f\big\|_{WL^1_w}:=
\sup_{\lambda>0}\lambda\cdot w\big(\big\{x\in\mathbb R^n:|f(x)|>\lambda \big\}\big)<\infty.
\end{equation*}

We next recall some basic definitions and facts about Orlicz spaces needed for the proofs of the main results. For more information on the subject, one can see \cite{rao}. A function $\Phi$ is called a Young function if it is continuous, nonnegative, convex and strictly increasing on $[0,+\infty)$ with $\Phi(0)=0$ and $\Phi(t)\to +\infty$ as $t\to +\infty$. We define the $\Phi$-average of a function $f$ over a ball $B$ by means of the following Luxemburg norm:
\begin{equation*}
\big\|f\big\|_{\Phi,B}:=\inf\left\{\sigma>0:\frac{1}{|B|}\int_B\Phi\left(\frac{|f(x)|}{\sigma}\right)dx\leq1\right\}.
\end{equation*}
An equivalent norm that is often useful in calculations is as follows(see \cite{rao,perez1}):
\begin{equation}\label{equiv norm}
\big\|f\big\|_{\Phi,B}\leq \inf_{\eta>0}\left\{\eta+\frac{\eta}{|B|}\int_B\Phi\left(\frac{|f(x)|}{\eta}\right)dx\right\}\leq 2\big\|f\big\|_{\Phi,B}.
\end{equation}
Given a Young function $\Phi$, we use $\bar\Phi$ to denote the complementary Young function associated to $\Phi$. Then the following generalized H\"older's inequality holds for any given ball $B$ (see \cite{perez1,perez2}).
\begin{equation*}
\frac{1}{|B|}\int_B|f(x)\cdot g(x)|\,dx\leq 2\big\|f\big\|_{\Phi,B}\big\|g\big\|_{\bar\Phi,B}.
\end{equation*}
In order to deal with the weighted case, for $w\in A_\infty$, we need to define the weighted $\Phi$-average of a function $f$ over a ball $B$ by means of the weighted Luxemburg norm:
\begin{equation*}
\big\|f\big\|_{\Phi(w),B}:=\inf\left\{\sigma>0:\frac{1}{w(B)}\int_B\Phi\left(\frac{|f(x)|}{\sigma}\right)w(x)\,dx\leq1\right\}.
\end{equation*}
It can be shown that for $w\in A_\infty$(see \cite{rao,zhang}),
\begin{equation}\label{equiv norm with weight}
\big\|f\big\|_{\Phi(w),B}\approx \inf_{\eta>0}\left\{\eta+\frac{\eta}{w(B)}\int_B\Phi\left(\frac{|f(x)|}{\eta}\right)w(x)\,dx\right\},
\end{equation}
and
\begin{equation*}
\frac{1}{w(B)}\int_B|f(x)\cdot g(x)|w(x)\,dx\leq C\big\|f\big\|_{\Phi(w),B}\big\|g\big\|_{\bar\Phi(w),B}.
\end{equation*}
The young function that we are going to use is $\Phi(t)=t\cdot(1+\log^+t)$ with its complementary Young function $\bar\Phi(t)\approx\exp(t)$. Here by $A\approx B$, we mean that there exists a constant $C>1$ such that $\frac1C\le\frac AB\le C$. In the present situation, we denote
\begin{equation*}
\big\|f\big\|_{L\log L(w),B}=\big\|f\big\|_{\Phi(w),B}, \qquad \big\|g\big\|_{\exp L(w),B}=\big\|g\big\|_{\bar\Phi(w),B}.
\end{equation*}
By the generalized H\"older's inequality with weight, we have (see \cite{perez1,zhang})
\begin{equation}\label{weighted holder}
\frac{1}{w(B)}\int_B|f(x)\cdot g(x)|w(x)\,dx\leq C\big\|f\big\|_{L\log L(w),B}\big\|g\big\|_{\exp L(w),B}.
\end{equation}

Let us now recall the definition of the space of $BMO(\mathbb R^n)$ (Bounded Mean Oscillation) (see \cite{duoand,john}).
A locally integrable function $b$ is said to be in $BMO(\mathbb R^n)$, if
\begin{equation*}
\|b\|_*:=\sup_{B}\frac{1}{|B|}\int_B|b(x)-b_B|\,dx<\infty,
\end{equation*}
where $b_B$ stands for the average of $b$ on $B$, i.e., $b_B=\frac{1}{|B|}\int_B b(y)\,dy$ and the supremum is taken
over all balls $B$ in $\mathbb R^n$. Modulo constants, the space $BMO(\mathbb R^n)$ is a Banach space with respect to the norm $\|\cdot\|_*$.
By the John--Nirenberg's inequality, it is not difficult to see that for any $w\in A_\infty$ and any given ball $B$ (see \cite{zhang}),
\begin{equation}\label{weighted exp}
\big\|b-b_B\big\|_{\exp L(w),B}\leq C\|b\|_*.
\end{equation}

\subsection{Morrey type spaces}

\newtheorem{defn}{Definition}[section]

\begin{defn}[\cite{komori}]
Let $1\leq p<\infty$, $0<\kappa<1$ and $w$ be a weight function on $\mathbb R^n$. Then the weighted Morrey space $L^{p,\kappa}(w)$ is defined by
\begin{equation*}
L^{p,\kappa}(w):=\left\{f\in L^p_{loc}(w):\big\|f\big\|_{L^{p,\kappa}(w)}
=\sup_B\left(\frac{1}{w(B)^{\kappa}}\int_B|f(x)|^pw(x)\,dx\right)^{1/p}<\infty\right\},
\end{equation*}
where the supremum is taken over all balls $B$ in $\mathbb R^n$. We also denote by $WL^{1,\kappa}(w)$ the weighted weak Morrey space of all measurable functions $f$ such that
\begin{equation*}
\sup_B\sup_{\lambda>0}\frac{1}{w(B)^{\kappa}}\lambda\cdot w\big(\big\{x\in B:|f(x)|>\lambda\big\}\big)
\leq C<\infty.
\end{equation*}
\end{defn}

Let $\Theta=\Theta(r)$, $r>0$, be a growth function, that is, a positive increasing function in $(0,+\infty)$ and satisfy the following doubling condition:
\begin{equation}\label{doubling}
\Theta(2r)\leq D\cdot\Theta(r), \quad \mbox{for all }\,r>0,
\end{equation}
where $D=D(\Theta)\ge1$ is a doubling constant independent of $r$.

\begin{defn}[\cite{mizuhara}]
Let $1\leq p<\infty$ and $\Theta$ be a growth function in $(0,+\infty)$. Then the generalized Morrey space $L^{p,\Theta}(\mathbb R^n)$ is defined as the set of all locally integrable functions $f$ for which
\begin{equation*}
\big\|f\big\|_{L^{p,\Theta}}:=\sup_{r>0;B(x_0,r)}\left(\frac{1}{\Theta(r)}\int_{B(x_0,r)}|f(x)|^p\,dx\right)^{1/p}<\infty,
\end{equation*}
where the supremum is taken over all balls $B(x_0,r)$ in $\mathbb R^n$ with $x_0\in\mathbb R^n$. We also denote by $WL^{1,\Theta}(\mathbb R^n)$ the generalized weak Morrey space of all measurable functions $f$ for which
\begin{equation*}
\sup_{B(x_0,r)}\sup_{\lambda>0}\frac{1}{\Theta(r)}\lambda\cdot\big|\big\{x\in B(x_0,r):|f(x)|>\lambda\big\}\big|
\leq C<\infty.
\end{equation*}
\end{defn}

In order to unify these two definitions, we will introduce Morrey type spaces as follows.
Let $0\leq\kappa<1$. Assume that $\theta(\cdot)$ is a positive increasing function defined in $(0,+\infty)$ and satisfies the following $\mathcal D_\kappa$ condition:
\begin{equation}\label{D condition}
\frac{\theta(\xi)}{\xi^\kappa}\leq C\cdot\frac{\theta(\xi')}{(\xi')^\kappa},\quad \mbox{for any}\;0<\xi'<\xi<+\infty,
\end{equation}
where $C>0$ is a constant independent of $\xi$ and $\xi'$.

\begin{defn}
Let $1\leq p<\infty$, $0\leq\kappa<1$, $\theta$ satisfy the $\mathcal D_\kappa$ condition $(\ref{D condition})$ and $w$ be a weight function on $\mathbb R^n$. We denote by $\mathcal M^{p,\theta}(w)$ the generalized weighted Morrey space of all locally integrable functions $f$ defined on $\mathbb R^n$, such that for every ball $B$ in $\mathbb R^n$,
\begin{equation*}
\left(\frac{1}{\theta(w(B))}\int_B |f(x)|^pw(x)\,dx\right)^{1/p}\leq C<\infty.
\end{equation*}
Then we let $\|f\|_{\mathcal M^{p,\theta}(w)}$ be the smallest constant $C>0$ satisfying the above estimate and $\mathcal M^{p,\theta}(w)$ becomes a Banach function space with norm $\|\cdot\|_{\mathcal M^{p,\theta}(w)}$. In the unweighted case(when $w$ equals a constant function), we denote the generalized unweighted Morrey space by $\mathcal M^{p,\theta}(\mathbb R^n)$. That is, let $1\leq p<\infty$ and $\theta$ satisfy the $\mathcal D_\kappa$ condition $(\ref{D condition})$ with $0\leq\kappa<1$, we define
\begin{equation*}
\mathcal M^{p,\theta}(\mathbb R^n):=\left\{f\in L^p_{loc}(\mathbb R^n):
\big\|f\big\|_{\mathcal M^{p,\theta}}=\sup_B\left(\frac{1}{\theta(|B|)}\int_B |f(x)|^p\,dx\right)^{1/p}<\infty\right\}.
\end{equation*}
\end{defn}

\begin{defn}
Let $p=1$, $0\leq\kappa<1$, $\theta$ satisfy the $\mathcal D_\kappa$ condition $(\ref{D condition})$ and $w$ be a weight function on $\mathbb R^n$. We denote by $W\mathcal M^{1,\theta}(w)$ the generalized weighted weak Morrey space consisting of all measurable functions $f$ defined on $\mathbb R^n$ for which
\begin{equation*}
\big\|f\big\|_{W\mathcal M^{1,\theta}(w)}:=\sup_B\sup_{\sigma>0}\frac{1}{\theta(w(B))}\sigma\cdot w\big(\big\{x\in B:|f(x)|>\sigma\big\}\big)
\leq C<\infty.
\end{equation*}
In the unweighted case(when $w$ equals a constant function), we denote the generalized unweighted weak Morrey space by $W\mathcal M^{1,\theta}(\mathbb R^n)$. That is, let $p=1$ and $\theta$ satisfy the $\mathcal D_\kappa$ condition $(\ref{D condition})$ with $0\leq\kappa<1$, we define
\begin{equation*}
W\mathcal M^{1,\theta}(\mathbb R^n):=\left\{f:
\big\|f\big\|_{W\mathcal M^{1,\theta}}=\sup_B\sup_{\sigma>0}\frac{1}{\theta(|B|)}\sigma\cdot\big|\big\{x\in B:|f(x)|>\sigma\big\}\big|<\infty\right\}.
\end{equation*}
\end{defn}
Note that
\begin{itemize}
  \item If $\theta(x)\equiv 1$, then $\mathcal M^{p,\theta}(w)=L^p_w(\mathbb R^n)$ and $W\mathcal M^{p,\theta}(w)=WL^p_w(\mathbb R^n)$. Thus our (weak) Morrey type space is an extension of the weighted (weak) Lebesgue space;
  \item If $\theta(x)=x^{\kappa}$ with $0<\kappa<1$, then $\mathcal M^{p,\theta}(w)$ is just the weighted Morrey space $L^{p,\kappa}(w)$, and $W\mathcal M^{1,\theta}(w)$ is just the weighted weak Morrey space $WL^{1,\kappa}(w)$;
  \item If $w$ equals a constant function, below we will show that $\mathcal M^{p,\theta}(\mathbb R^n)$ reduces to the generalized Morrey space $L^{p,\Theta}(\mathbb R^n)$, and $W\mathcal M^{1,\theta}(\mathbb R^n)$ reduces to the generalized weak Morrey space $WL^{1,\Theta}(\mathbb R^n)$.
\end{itemize}

\subsection{Main theorems}

\newtheorem{theorem}{Theorem}[section]

\newtheorem{corollary}[theorem]{Corollary}

The main results of this paper can be stated as follows.
\begin{theorem}\label{mainthm:1}
Let $0<\alpha\le1$, $1<p<\infty$ and $w\in A_p$. Assume that $\theta$ satisfies the $\mathcal D_\kappa$ condition $(\ref{D condition})$ with $0\leq\kappa<1$, then there is a constant $C>0$ independent of $\vec{f}=(f_1,f_2,\ldots)$ such that
\begin{equation*}
\bigg\|\bigg(\sum_{j}\big|\mathcal S_\alpha(f_j)\big|^2\bigg)^{1/2}\bigg\|_{\mathcal M^{p,\theta}(w)}
\leq C \bigg\|\bigg(\sum_{j}\big|f_j\big|^2\bigg)^{1/2}\bigg\|_{\mathcal M^{p,\theta}(w)}.
\end{equation*}
\end{theorem}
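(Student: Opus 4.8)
The plan is to reduce the Morrey-type estimate to the known weighted $L^p$ bound (Theorem A) by a standard localization argument. Fix a ball $B=B(x_0,r_B)$. Split $\vec f = \vec f\cdot\chi_{2B} + \vec f\cdot\chi_{(2B)^c} =: \vec f_0 + \vec f_\infty$, where the multiplication is understood componentwise, so that $\mathcal S_\alpha(\vec f)(x) \le \mathcal S_\alpha(\vec f_0)(x) + \mathcal S_\alpha(\vec f_\infty)(x)$ by the triangle inequality in $\ell^2$ (more precisely in the mixed $\ell^2(L^2(\Gamma(x)))$ norm defining the vector-valued square function). It then suffices to control $\frac{1}{\theta(w(B))}\int_B \mathcal S_\alpha(\vec f_0)(x)^p w(x)\,dx$ and the analogous quantity for $\vec f_\infty$, uniformly in $B$.

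For the local term, I would apply Theorem A directly: since $\mathcal S_\alpha(\vec f_0) \ge 0$, we have
\begin{equation*}
\int_B \mathcal S_\alpha(\vec f_0)(x)^p w(x)\,dx \le \big\|\mathcal S_\alpha(\vec f_0)\big\|_{L^p_w}^p \le C\int_{2B}\bigg(\sum_j |f_j(x)|^2\bigg)^{p/2} w(x)\,dx \le C\,\theta(w(2B))\,\big\|\vec f\big\|_{\mathcal M^{p,\theta}(w)}^p.
\end{equation*}
Since $w\in A_p\subset A_\infty$ is doubling, $w(2B)\le C\,w(B)$ by \eqref{weights}; combined with the fact that $\theta$ is increasing and satisfies the $\mathcal D_\kappa$ condition \eqref{D condition}, one gets $\theta(w(2B))\le C\,\theta(w(B))$, which yields the desired bound after dividing by $\theta(w(B))$.

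The main work is the global term $\mathcal S_\alpha(\vec f_\infty)$. Here I would exploit the size condition on the kernels $\varphi\in\mathcal C_\alpha$ — namely $\mathrm{supp}\,\varphi\subseteq\{|x|\le 1\}$ and $\|\varphi\|_\infty\le C$ (the latter following from the Hölder condition and vanishing integral) — to obtain a pointwise estimate: for $x\in B$ and $z\in(2B)^c$, the contribution of $\varphi_t(y-z)$ to $A_\alpha(f_j)(y,t)$ with $(y,t)\in\Gamma(x)$ forces $t\gtrsim |z-x_0|$, and a routine computation gives, after summing in $j$ via Minkowski's inequality,
\begin{equation*}
\mathcal S_\alpha(\vec f_\infty)(x) \le C\sum_{k=1}^\infty \frac{1}{|2^{k+1}B|}\int_{2^{k+1}B\setminus 2^kB}\bigg(\sum_j |f_j(z)|^2\bigg)^{1/2}dz.
\end{equation*}
Applying Hölder's inequality with exponent $p$, the $A_p$ condition on $w$ (to pass from $|E|$-averages to $w$-averages over $2^{k+1}B$), the Morrey hypothesis on $\vec f$, and the dyadic reverse-doubling/doubling estimates \eqref{weights} and \eqref{compare} to bound $w(B)/w(2^{k+1}B)$ by $C\,2^{-(k+1)n\delta}$ for some $\delta>0$, each term is dominated by $C\,2^{-k\varepsilon}\big(\theta(w(2^{k+1}B))/\theta(w(B))\big)^{1/p}\|\vec f\|_{\mathcal M^{p,\theta}(w)}$ for a suitable $\varepsilon>0$; finally the $\mathcal D_\kappa$ condition controls $\theta(w(2^{k+1}B))/\theta(w(B))$ by $C\big(w(2^{k+1}B)/w(B)\big)^\kappa \le C\,2^{(k+1)n\kappa}$ (using \eqref{general weights}-type growth), so the series converges provided $\varepsilon$ beats $\kappa$, which the $A_p$/reverse-Hölder gain $\delta$ guarantees after choosing the splitting parameters appropriately. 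The delicate point — and the one I would treat most carefully — is balancing this geometric decay against the possible growth of $\theta(w(2^{k+1}B))/\theta(w(B))$; the $\mathcal D_\kappa$ condition with $\kappa<1$ together with the $A_\infty$ comparison \eqref{compare} is exactly what makes the sum summable.
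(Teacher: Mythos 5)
Your overall architecture mirrors the paper's: split each $f_j$ at $2B$, handle the local piece with Theorem A plus doubling and $\mathcal D_\kappa$, derive a pointwise ``tail'' bound for the global piece, and then sum a geometric series. The local term and the pointwise estimate for $\mathcal S_\alpha(\vec f_\infty)$ are fine and match the paper's calculations. The gap is in how you sum the tail.

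You propose to bound the two factors in $\frac{\theta(w(2^{l+1}B))^{1/p}}{\theta(w(B))^{1/p}}\cdot\frac{w(B)^{1/p}}{w(2^{l+1}B)^{1/p}}$ \emph{separately}: \eqref{compare} gives $\frac{w(B)}{w(2^{l+1}B)}\le C\,2^{-(l+1)n\delta}$, and then $\mathcal D_\kappa$ plus a growth bound on $w$ gives $\frac{\theta(w(2^{l+1}B))}{\theta(w(B))}\le C\Big(\frac{w(2^{l+1}B)}{w(B)}\Big)^{\kappa}\le C\,2^{(l+1)n\kappa}$. Two problems. First, the cited bound \eqref{general weights} is stated only for $w\in A_1$; for $w\in A_p$ with $p>1$ the growth rate is $2^{(l+1)np}$, so the $\theta$-ratio is only $\lesssim 2^{(l+1)np\kappa}$. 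Second, and more seriously, after raising everything to the power $1/p$ the resulting series is $\sum_l 2^{-(l+1)n\delta/p}\cdot 2^{(l+1)n\kappa}$, which converges only when $\delta>p\kappa$. There is no such constraint: the $A_\infty$ exponent $\delta$ in \eqref{compare} is intrinsic to $w$ and can be arbitrarily small, while $\kappa$ may be taken close to $1$, so ``$\varepsilon$ beats $\kappa$'' is not guaranteed and no ``choice of splitting parameters'' rescues it. The correct move, and the one the paper uses, is to combine the two factors \emph{before} invoking \eqref{compare}: by $\mathcal D_\kappa$ one has
\begin{equation*}
\frac{\theta(w(2^{l+1}B))^{1/p}}{\theta(w(B))^{1/p}}\cdot\frac{w(B)^{1/p}}{w(2^{l+1}B)^{1/p}}
\le C\left(\frac{w(B)}{w(2^{l+1}B)}\right)^{(1-\kappa)/p},
\end{equation*}
and only then does one apply \eqref{compare}, obtaining decay $2^{-(l+1)n\delta(1-\kappa)/p}$ whose exponent is automatically positive for every $\kappa<1$, every $\delta>0$, and every $p$. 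The numerology you wrote down loses exactly this cancellation; your closing remark names the right two ingredients but your intermediate estimates do not actually implement them, so as stated the proposal does not yield convergence of the series for general $w\in A_p$ and general $\kappa\in(0,1)$.
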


\begin{theorem}\label{mainthm:2}
Let $0<\alpha\le1$, $p=1$ and $w\in A_1$. Assume that $\theta$ satisfies the $\mathcal D_\kappa$ condition $(\ref{D condition})$ with $0\leq\kappa<1$, then there is a constant $C>0$ independent of $\vec{f}=(f_1,f_2,\ldots)$ such that
\begin{equation*}
\bigg\|\bigg(\sum_{j}\big|\mathcal S_\alpha(f_j)\big|^2\bigg)^{1/2}\bigg\|_{W\mathcal M^{1,\theta}(w)}
\leq C \bigg\|\bigg(\sum_{j}\big|f_j\big|^2\bigg)^{1/2}\bigg\|_{\mathcal M^{1,\theta}(w)}.
\end{equation*}
\end{theorem}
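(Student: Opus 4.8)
\noindent\emph{Proof proposal.} Write $\mathcal N:=\big\|(\sum_{j}|f_j|^2)^{1/2}\big\|_{\mathcal M^{1,\theta}(w)}$ for the quantity on the right-hand side. The plan is to estimate $\|\cdot\|_{W\mathcal M^{1,\theta}(w)}$ ball by ball via the standard local/global splitting. Fix a ball $B=B(x_0,r_B)$ and write $f_j=f_j^{0}+f_j^{\infty}$ with $f_j^{0}=f_j\chi_{2B}$ and $f_j^{\infty}=f_j\chi_{\mathbb R^n\setminus 2B}$. Since $A_\alpha(\cdot)(y,t)$ is subadditive in its argument (a supremum of absolute values of linear functionals of $f$) and $\mathcal S_\alpha(f)(x)$ is the $L^2(\Gamma(x),t^{-n-1}\,dy\,dt)$-norm of $A_\alpha(f)$, Minkowski's inequality gives $\mathcal S_\alpha(f_j)(x)\le\mathcal S_\alpha(f_j^{0})(x)+\mathcal S_\alpha(f_j^{\infty})(x)$ and hence, taking $\ell^2$-norms in $j$,
\[
\Big(\sum_{j}|\mathcal S_\alpha(f_j)(x)|^2\Big)^{1/2}\le\Big(\sum_{j}|\mathcal S_\alpha(f_j^{0})(x)|^2\Big)^{1/2}+\Big(\sum_{j}|\mathcal S_\alpha(f_j^{\infty})(x)|^2\Big)^{1/2}.
\]
It thus suffices to bound, uniformly in $B$ and $\sigma>0$, each of $\frac{1}{\theta(w(B))}\,\sigma\cdot w(\{x\in B:(\sum_{j}|\mathcal S_\alpha(f_j^{0})(x)|^2)^{1/2}>\sigma\})$ and the analogue with $f_j^{\infty}$ by a constant multiple of $\mathcal N$ (the harmless factor $1/2$ from splitting the level set being absorbed into the constant).

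\emph{Local part.} Applying Theorem B to the truncated vector $(f_1^{0},f_2^{0},\dots)$ gives $w(\{x:(\sum_{j}|\mathcal S_\alpha(f_j^{0})(x)|^2)^{1/2}>\sigma\})\le\frac{C}{\sigma}\int_{2B}(\sum_{j}|f_j|^2)^{1/2}w\,dx$. Multiplying by $\sigma/\theta(w(B))$ cancels the $\sigma$'s, and the definition of $\mathcal M^{1,\theta}(w)$ bounds the remaining integral by $\theta(w(2B))\,\mathcal N$. Because $w\in A_1$ is doubling, $w(2B)\le Cw(B)$ by \eqref{weights}, whence $\theta(w(2B))=\frac{\theta(w(2B))}{w(2B)^{\kappa}}\,w(2B)^{\kappa}\le C\frac{\theta(w(B))}{w(B)^{\kappa}}\,w(2B)^{\kappa}\le C\,\theta(w(B))$ by the $\mathcal D_\kappa$ condition~\eqref{D condition}, which finishes this part.

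\emph{Global part.} The crux is the pointwise estimate: for $x\in B$ and $g$ supported in $\mathbb R^n\setminus 2B$,
\[
\mathcal S_\alpha(g)(x)\le C\sum_{k=1}^{\infty}\frac{1}{|2^{k+1}B|}\int_{2^{k+1}B}|g(z)|\,dz.
\]
I would prove it by writing $\mathbb R^n\setminus 2B=\bigcup_{k\ge1}A_k$, $A_k=2^{k+1}B\setminus 2^{k}B$, observing that for $(y,t)\in\Gamma(x)$ and $z\in A_k$ the kernel $\varphi_t(y-z)$ is nonzero only when $|y-z|\le t$, which (since $|x-x_0|<r_B$ and $|z-x_0|\ge 2^{k}r_B$) forces $t\gtrsim 2^{k}r_B$; on that range $A_\alpha(g\chi_{A_k})(y,t)\le Ct^{-n}\int_{A_k}|g|$ (every $\varphi\in\mathcal C_\alpha$ being uniformly bounded), and integrating over $\{(y,t)\in\Gamma(x):t\gtrsim 2^{k}r_B\}$ against $t^{-n-1}\,dy\,dt$ gives $\mathcal S_\alpha(g\chi_{A_k})(x)\le C(2^{k}r_B)^{-n}\int_{A_k}|g|$; the subadditivity of $\mathcal S_\alpha$ then sums these. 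Putting $g=f_j^{\infty}$, taking $\ell^2$-norms in $j$ and applying Minkowski's inequality twice, one obtains for all $x\in B$
\[
\Big(\sum_{j}|\mathcal S_\alpha(f_j^{\infty})(x)|^2\Big)^{1/2}\le C\sum_{k=1}^{\infty}\frac{1}{|2^{k+1}B|}\int_{2^{k+1}B}\Big(\sum_{j}|f_j(z)|^2\Big)^{1/2}\,dz.
\]
Since $w\in A_1$, the average of $w$ over $2^{k+1}B$ is comparable to its essential infimum there, so the $k$-th summand is $\lesssim\theta(w(2^{k+1}B))\,w(2^{k+1}B)^{-1}\,\mathcal N$; factoring $\theta(\xi)/\xi=(\theta(\xi)/\xi^{\kappa})\,\xi^{-(1-\kappa)}$ and using \eqref{D condition} on the first factor together with the reverse-doubling bound $w(B)/w(2^{k+1}B)\le C\,2^{-(k+1)n\delta}$ from \eqref{compare} on the second shows this summand is $\le C\,2^{-(k+1)n\delta(1-\kappa)}\,\theta(w(B))\,w(B)^{-1}\,\mathcal N$. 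As $0\le\kappa<1$ and $\delta>0$, the geometric series converges, giving $(\sum_{j}|\mathcal S_\alpha(f_j^{\infty})(x)|^2)^{1/2}\le C_0:=C\,\theta(w(B))\,w(B)^{-1}\,\mathcal N$ for every $x\in B$. Hence the level set inside $B$ is empty for $\sigma>C_0$, and for $\sigma\le C_0$ one bounds the weighted measure of the level set by $w(B)$ and uses $\sigma\le C_0$ to get $\theta(w(B))^{-1}\sigma\,w(B)\le C\,\mathcal N$. Adding the two parts and taking suprema over $B$ and $\sigma$ finishes the proof.

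The step I expect to be the main obstacle is the global part: first pinning down the pointwise estimate with the correct geometric factors $|2^{k+1}B|^{-1}$ emerging from the cone integral, and then summing the resulting series in the \emph{weighted} Morrey scale — this is exactly where the $\mathcal D_\kappa$ condition and the $A_\infty$ reverse-doubling inequality \eqref{compare} must be played off against each other to produce genuine geometric decay in $k$, and it is precisely here that the hypothesis $\kappa<1$ is used. (The weighted weak type $(1,1)$ estimate and the weighted weak Morrey estimate fall out as the special cases $\theta\equiv1$ and $\theta(t)=t^{\kappa}$.)
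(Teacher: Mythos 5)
Your proposal is correct and follows essentially the same route as the paper: split $f_j$ into a local piece supported on $2B$ and a global piece, handle the local piece via Theorem~B together with the $\mathcal D_\kappa$ condition and the doubling of $w$, and handle the global piece via the pointwise estimate $\big(\sum_j|\mathcal S_\alpha(f_j^\infty)(x)|^2\big)^{1/2}\le C\sum_k|2^{k+1}B|^{-1}\int_{2^{k+1}B}\|\vec f\|_{\ell^2}$, then sum using the $A_1$ condition, the $\mathcal D_\kappa$ condition, and the reverse-doubling bound \eqref{compare}. The only cosmetic differences are that the paper uses duality plus Cauchy--Schwarz where you invoke Minkowski's inequality to get the $\ell^2$ pointwise bound, and the paper applies Chebyshev's inequality to the pointwise bound where you argue directly that the level set is either empty or has weighted measure at most $w(B)$ — both yield identical bounds.
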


\begin{theorem}\label{mainthm:3}
Let $0<\alpha\le1$, $1<p<\infty$, $w\in A_p$ and $b\in BMO(\mathbb R^n)$. Assume that $\theta$ satisfies the $\mathcal D_\kappa$ condition $(\ref{D condition})$ with $0\leq\kappa<1$, then there is a constant $C>0$ independent of $\vec{f}=(f_1,f_2,\ldots)$ such that
\begin{equation*}
\bigg\|\bigg(\sum_{j}\big|\big[b,\mathcal S_\alpha\big](f_j)\big|^2\bigg)^{1/2}\bigg\|_{\mathcal M^{p,\theta}(w)}
\leq C \bigg\|\bigg(\sum_{j}\big|f_j\big|^2\bigg)^{1/2}\bigg\|_{\mathcal M^{p,\theta}(w)}.
\end{equation*}
\end{theorem}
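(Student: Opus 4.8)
The plan is to run the standard reduction of a Morrey-type bound to a supremum over balls, splitting $\vec f$ into a local and a global piece, and to use the $A_\infty$ John--Nirenberg machinery to absorb the $BMO$ factor. We may assume only finitely many $f_j$ are nonzero, and set $g:=\big(\sum_j|f_j|^2\big)^{1/2}$ and $[b,\mathcal S_\alpha](\vec h):=\big(\sum_j|[b,\mathcal S_\alpha](h_j)|^2\big)^{1/2}$. Fix a ball $B=B(x_0,r_B)$, decompose $f_j=f_j\chi_{2B}+f_j\chi_{(2B)^c}=:f_j^0+f_j^\infty$, and put $\vec f_0=(f_j^0)_j$, $\vec f_\infty=(f_j^\infty)_j$. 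By the triangle inequality in $\ell^2$ it suffices to bound $\big\|[b,\mathcal S_\alpha](\vec f_0)\big\|_{L^p_w(B)}$ and $\big\|[b,\mathcal S_\alpha](\vec f_\infty)\big\|_{L^p_w(B)}$ each by $C\|b\|_*\,\theta(w(B))^{1/p}\|g\|_{\mathcal M^{p,\theta}(w)}$, since dividing by $\theta(w(B))^{1/p}$ and taking the supremum over all $B$ then yields the assertion. For the local piece I would invoke the weighted $L^p$-boundedness of the vector-valued commutator, $\big\|[b,\mathcal S_\alpha](\vec h)\big\|_{L^p_w}\le C\|b\|_*\big\|(\sum_j|h_j|^2)^{1/2}\big\|_{L^p_w}$ for $w\in A_p$ --- the commutator counterpart of Theorem A, which follows from the scalar weighted estimate in \cite{wang1} by a routine vector-valued argument. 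Applied to $\vec f_0$, and using that each $f_j^0$ is supported in $2B$, this gives $\big\|[b,\mathcal S_\alpha](\vec f_0)\big\|_{L^p_w(B)}\le C\|b\|_*\|g\|_{L^p_w(2B)}\le C\|b\|_*\,\theta(w(2B))^{1/p}\|g\|_{\mathcal M^{p,\theta}(w)}$, and the doubling property \eqref{weights} together with the $\mathcal D_\kappa$ condition \eqref{D condition} yields $\theta(w(2B))\le C\,\theta(w(B))(w(2B)/w(B))^{\kappa}\le C\,\theta(w(B))$, as wanted.

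For the global piece the crux is a pointwise estimate on $B$. If $x\in B$ and $(y,t)\in\Gamma(x)$, then $z\mapsto\varphi_t(y-z)$ is supported in $B(y,t)\subseteq B(x,2t)$ with $\|\varphi_t\|_\infty\le Ct^{-n}$, and the inner integral against a function supported in $(2B)^c$ vanishes unless $t>r_B/2$; decomposing the cone integral over the dyadic shells $t\in[2^{k-2}r_B,2^{k-1}r_B)$, $k\ge1$, carrying out the (now trivial) $y$-integration, and applying Minkowski's inequality in $\ell^2$ and in the integral, one is led to
\begin{equation*}
\big[b,\mathcal S_\alpha\big](\vec f_\infty)(x)\le C\sum_{k=1}^\infty\frac{1}{|2^{k+1}B|}\int_{2^{k+1}B}\big|b(x)-b(z)\big|\,g(z)\,dz,\qquad x\in B.
\end{equation*}
Splitting $|b(x)-b(z)|\le|b(x)-b_{2^{k+1}B}|+|b(z)-b_{2^{k+1}B}|$ breaks the $k$-th summand into $\mathrm I_k(x)=|b(x)-b_{2^{k+1}B}|\cdot|2^{k+1}B|^{-1}\int_{2^{k+1}B}g$ and the $x$-independent $\mathrm{II}_k=|2^{k+1}B|^{-1}\int_{2^{k+1}B}|b(z)-b_{2^{k+1}B}|\,g(z)\,dz$.

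Taking $L^p_w(B)$-norms: for $\mathrm I_k$ one uses $\big\|b-b_{2^{k+1}B}\big\|_{L^p_w(B)}\le\big\|b-b_B\big\|_{L^p_w(B)}+\big|b_B-b_{2^{k+1}B}\big|\,w(B)^{1/p}\le C(k+1)\|b\|_*\,w(B)^{1/p}$, the first summand being controlled by the weighted John--Nirenberg estimate \eqref{weighted exp} and $t^p\le C_p\,e^t$, the second by the classical $\big|b_B-b_{2^{k+1}B}\big|\le C(k+1)\|b\|_*$. For $\mathrm{II}_k$ one applies H\"older with exponents $p,p'$, uses $\sigma:=w^{1-p'}\in A_{p'}\subseteq A_\infty$ to get $\big(\int_{2^{k+1}B}|b-b_{2^{k+1}B}|^{p'}\sigma\big)^{1/p'}\le C\|b\|_*\,\sigma(2^{k+1}B)^{1/p'}$ (\eqref{weighted exp} applied with the weight $\sigma$), and invokes the $A_p$ inequality in the form $\sigma(2^{k+1}B)^{1/p'}\le C\,|2^{k+1}B|\,w(2^{k+1}B)^{-1/p}$. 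In both cases the remaining factor --- $|2^{k+1}B|^{-1}\int_{2^{k+1}B}g$ for $\mathrm I_k$ (H\"older again, plus the $A_p$ bound just quoted) and $\big(\int_{2^{k+1}B}g^pw\big)^{1/p}$ for $\mathrm{II}_k$ --- is at most $C\|g\|_{\mathcal M^{p,\theta}(w)}\big(\theta(w(2^{k+1}B))/w(2^{k+1}B)\big)^{1/p}$, respectively $\|g\|_{\mathcal M^{p,\theta}(w)}\,\theta(w(2^{k+1}B))^{1/p}$. Collecting factors, $\big\|\mathrm I_k\big\|_{L^p_w(B)}+\big\|\mathrm{II}_k\big\|_{L^p_w(B)}\le C(k+1)\|b\|_*\|g\|_{\mathcal M^{p,\theta}(w)}\,w(B)^{1/p}\big(\theta(w(2^{k+1}B))/w(2^{k+1}B)\big)^{1/p}$; the $\mathcal D_\kappa$ condition rewrites the final product as $\theta(w(B))^{1/p}\big(w(B)/w(2^{k+1}B)\big)^{(1-\kappa)/p}$, and \eqref{compare} with $E=B\subseteq 2^{k+1}B$ gives $\big(w(B)/w(2^{k+1}B)\big)^{(1-\kappa)/p}\le C\,2^{-(k+1)n\delta(1-\kappa)/p}$. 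Since $\sum_{k\ge1}(k+1)2^{-(k+1)n\delta(1-\kappa)/p}<\infty$, summing in $k$ bounds the global piece by $C\|b\|_*\,\theta(w(B))^{1/p}\|g\|_{\mathcal M^{p,\theta}(w)}$, finishing the proof.

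The step I expect to be the main obstacle is the global piece: first, pinning down the pointwise kernel bound for the commutator with the correct $L^1$-normalization --- this rests on the support and size properties of $\varphi\in\mathcal C_\alpha$ and on a careful dyadic decomposition of $\Gamma(x)$ --- and second, the bookkeeping that extracts genuine geometric decay in $k$, where the $\mathcal D_\kappa$ condition, the $A_p$ duality $\sigma(Q)^{1/p'}\approx|Q|\,w(Q)^{-1/p}$, the weighted $BMO$ estimate \eqref{weighted exp}, and the $A_\infty$ comparison \eqref{compare} must all act in concert so that the $k$-series converges. Everything else is the routine ``reduce to a supremum over balls'' scheme already used in the proofs of Theorems \ref{mainthm:1} and \ref{mainthm:2}.
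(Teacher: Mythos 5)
Your proposal is correct and follows essentially the same ``reduce to a supremum over balls, split into local and global pieces, and sum a geometric series'' scheme as the paper. The organization of the global piece differs cosmetically: you split $|b(x)-b(z)|\le|b(x)-b_{2^{k+1}B}|+|b(z)-b_{2^{k+1}B}|$ directly, producing two terms, whereas the paper first splits at $b_B$ and then again at $b_{2^{l+1}B}$, producing three terms ($J_3$, $J_4$, $J_5$). Both routes use the same ingredients (the BMO estimate $\|b-b_Q\|_{L^p_w(Q)}\le C\|b\|_*w(Q)^{1/p}$ via the reverse H\"older class, the $\ell^2$-duality/Minkowski estimate for the tail, the $A_p$ duality $\sigma(Q)^{1/p'}\approx|Q|w(Q)^{-1/p}$ with $\sigma=w^{1-p'}\in A_{p'}$, the $\mathcal D_\kappa$ condition, and the $A_\infty$ comparison \eqref{compare}), and in both cases the series $\sum_k(k+1)2^{-(k+1)n\delta(1-\kappa)/p}$ converges, so the outcomes agree.

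The one point you should tighten is the provenance of the vector-valued weighted $L^p$ bound for the commutator, i.e.\ the paper's Theorem \ref{commutator thm}. You state it ``follows from the scalar weighted estimate in \cite{wang1} by a routine vector-valued argument,'' but passing from a scalar $\ell^1\to\ell^1$-type bound to an $\ell^2$-valued bound for a \emph{sublinear} (not linear) operator such as $[b,\mathcal S_\alpha]$ is not automatic, and no Marcinkiewicz--Zygmund--type transference applies directly. The paper instead derives Theorem \ref{commutator thm} from the vector-valued Theorem~A by Coifman's analyticity device (the contour-integral formula $b(x)-b(z)=\frac{1}{2\pi}\int_0^{2\pi}e^{e^{i\theta}[b(x)-b(z)]}e^{-i\theta}\,d\theta$), which pushes the $BMO$ factor into an exponential weight that is absorbed by the $A_p$ class; the $\ell^2$-duality step and Cauchy--Schwarz are then applied inside the $\theta$-integral before invoking Theorem~A. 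Since the result you invoke is true, this does not create a gap in your argument, but the attribution of its proof to a ``routine'' scalar-to-vector passage is inaccurate and would need replacing with the contour-integral argument (or an explicit citation of Theorem \ref{commutator thm}).
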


In order to simplify the notations, for any given $\sigma>0$, we set
\begin{equation*}
\Phi\left(\frac{|f(x)|}{\sigma}\right)=\frac{|f(x)|}{\sigma}\cdot\left(1+\log^+\frac{|f(x)|}{\sigma}\right)
\end{equation*}
when $\Phi(t)=t\cdot(1+\log^+t)$ and $\log^+t=\max\{\log t,0\}$. For the endpoint estimates for these commutator operators in the weighted Lebesgue space $L^1_w(\mathbb R^n)$, we will show

\begin{theorem}\label{mainthm:4}
Let $0<\alpha\leq1$, $p=1$, $w\in A_1$ and $b\in BMO(\mathbb R^n)$. Then for any given $\sigma>0$, there exists a constant $C>0$ independent of $\vec{f}=(f_1,f_2,\ldots)$ and $\sigma>0$ such that
\begin{equation*}
\begin{split}
w\bigg(\bigg\{x\in\mathbb R^n:\bigg(\sum_{j}\big|\big[b,\mathcal S_{\alpha}\big](f_j)(x)\big|^2\bigg)^{1/2}>\sigma\bigg\}\bigg)
\leq C\int_{\mathbb R^n}\Phi\bigg(\frac{\|\vec{f}(x)\|_{\ell^2}}{\sigma}\bigg)\cdot w(x)\,dx,
\end{split}
\end{equation*}
where $\Phi(t)=t\cdot(1+\log^+t)$ and $\big\|\vec{f}(x)\big\|_{\ell^2}=\Big(\sum_{j}|f_j(x)|^2\Big)^{1/2}$.
\end{theorem}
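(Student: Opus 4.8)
\medskip
\noindent\textbf{Proof proposal.}\ The plan is to run a Calder\'on--Zygmund argument, using as black boxes the weighted $L^2$ bound for the vector-valued commutator (the special case $\theta\equiv1$, $p=2$ of Theorem \ref{mainthm:3}, which applies since $A_1\subset A_2$), the vector-valued weak $(1,1)$ inequality for $\mathcal S_\alpha$ (Theorem B), and the weighted Orlicz estimates \eqref{equiv norm with weight}, \eqref{weighted holder}, \eqref{weighted exp} of Section 2. By the positive homogeneity of $[b,\mathcal S_\alpha](\cdot)$ in $\vec f$ we may assume $\sigma=1$, and by a routine limiting argument we may assume that only finitely many $f_j$ are nonzero, each bounded with compact support, so that $g:=\|\vec f(\cdot)\|_{\ell^2}\in L^1(\mathbb R^n)$. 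First I would form the Calder\'on--Zygmund decomposition of $g$ at height $1$: disjoint cubes $\{Q_j\}$ with $1<\frac1{|Q_j|}\int_{Q_j}g\le 2^n$ and $g\le1$ a.e.\ on $\Omega^c$, $\Omega=\bigcup_jQ_j$. Splitting $\vec f=\vec h+\vec\beta$ componentwise, with $h^{(i)}=f_i\chi_{\Omega^c}+\sum_j\big(\frac1{|Q_j|}\int_{Q_j}f_i\big)\chi_{Q_j}$ and $\beta^{(i)}_j=\big(f_i-\frac1{|Q_j|}\int_{Q_j}f_i\big)\chi_{Q_j}$, $\beta^{(i)}=\sum_j\beta^{(i)}_j$, Minkowski's integral inequality gives $\|\vec h(x)\|_{\ell^2}\le 2^n$ a.e., $\int_{Q_j}\beta^{(i)}_j=0$, and $\int_{Q_j}\|\vec\beta_j(z)\|_{\ell^2}\,dz\le 2\int_{Q_j}g$; it then suffices to bound separately the level sets of $[b,\mathcal S_\alpha](\vec h)$ and $[b,\mathcal S_\alpha](\vec\beta)$ above $1/2$. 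For the good part I would use Chebyshev's inequality and the $L^2_w$ bound of Theorem \ref{mainthm:3} to get $w(\{\|[b,\mathcal S_\alpha](\vec h)\|_{\ell^2}>\tfrac12\})\lesssim\int\|\vec h(x)\|_{\ell^2}^2w(x)\,dx$; on $\Omega^c$ one has $g^2\le g\le\Phi(g)$, and on each $Q_j$ one has $\|\vec h\|_{\ell^2}^2\le 2^n\frac1{|Q_j|}\int_{Q_j}g$, so the $A_1$ inequality $\frac{w(Q_j)}{|Q_j|}\le C\operatorname{ess\,inf}_{Q_j}w$ yields $\int_{Q_j}\|\vec h\|_{\ell^2}^2w\lesssim\int_{Q_j}g\,w\le\int_{Q_j}\Phi(g)w$, and summing over the disjoint $Q_j$ bounds the good part by $C\int_{\mathbb R^n}\Phi(g)w$.

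For the bad part, let $\Omega^*=\bigcup_jQ_j^*$ where $Q_j^*$ is a suitable fixed dilate of $Q_j$. Since $w\in A_1$, \eqref{general weights} gives $w(\Omega^*)\lesssim\sum_jw(Q_j)$, and $w(Q_j)\lesssim(\operatorname{ess\,inf}_{Q_j}w)|Q_j|\le\int_{Q_j}g\,w\le\int_{Q_j}\Phi(g)w$ because $|Q_j|<\int_{Q_j}g$, so $w(\Omega^*)\lesssim\int_{\mathbb R^n}\Phi(g)w$. It remains to estimate $w(\{x\notin\Omega^*:\|[b,\mathcal S_\alpha](\vec\beta)(x)\|_{\ell^2}>\tfrac12\})$. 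Writing $b(x)-b(z)=(b(x)-b_{Q_j})-(b(z)-b_{Q_j})$ for $z\in Q_j$, then using $\int_{Q_j}\beta^{(i)}_j=0$ to insert $-\varphi_t(y-c_j)$ ($c_j$ the center of $Q_j$) in the first piece and the subadditivity of $\mathcal S_\alpha$ and of its commutator, one obtains the pointwise bound
\[
\|[b,\mathcal S_\alpha](\vec\beta)(x)\|_{\ell^2}\le\Big(\sum_i|F^{(i)}(x)|^2\Big)^{1/2}+\mathcal S_\alpha(\vec{\widetilde\beta})(x),\qquad \widetilde\beta^{(i)}:=\sum_j(b-b_{Q_j})\beta^{(i)}_j,
\]
where $F^{(i)}(x)=\big(\iint_{\Gamma(x)}\sup_{\varphi\in\mathcal C_\alpha}\big|\sum_j(b(x)-b_{Q_j})\int_{Q_j}[\varphi_t(y-z)-\varphi_t(y-c_j)]\beta^{(i)}_j(z)\,dz\big|^2\frac{dydt}{t^{n+1}}\big)^{1/2}$, so it suffices to threshold each of the two terms on the right above $1/4$.

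The term $\mathcal S_\alpha(\vec{\widetilde\beta})$ I would dispatch with Theorem B, which gives $w(\{\mathcal S_\alpha(\vec{\widetilde\beta})>\tfrac14\})\lesssim\int\|\vec{\widetilde\beta}(x)\|_{\ell^2}w(x)\,dx=\sum_j\int_{Q_j}|b-b_{Q_j}|\,\|\vec\beta_j\|_{\ell^2}w$; using $\|\vec\beta_j(x)\|_{\ell^2}\le g(x)+\frac1{|Q_j|}\int_{Q_j}g$, the $\frac1{|Q_j|}\int_{Q_j}g$-part is handled by $\int_{Q_j}|b-b_{Q_j}|w\lesssim\|b\|_*w(Q_j)$ (from \eqref{weighted holder}, \eqref{weighted exp}) and $A_1$, while the $g(x)$-part is handled by \eqref{weighted holder}, \eqref{weighted exp}, \eqref{equiv norm with weight}, which produce exactly $\int_{Q_j}|b-b_{Q_j}|g\,w\lesssim\|b\|_*w(Q_j)\|g\|_{L\log L(w),Q_j}\lesssim\|b\|_*\big(w(Q_j)+\int_{Q_j}\Phi(g)w\big)$ --- this is the one step where the $L\log L$ form of the right-hand side is genuinely needed. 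Summing over the disjoint $Q_j$ and recalling $\sum_jw(Q_j)\lesssim\int\Phi(g)w$ bounds this contribution by $C\int_{\mathbb R^n}\Phi(g)w$.

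The remaining term $\big(\sum_i|F^{(i)}|^2\big)^{1/2}$ is where the real work lies. After pulling $|b(x)-b_{Q_j}|$ out by Minkowski's inequality in $L^2(\Gamma(x))$ and in $\ell^2_i$, Chebyshev reduces matters to bounding $\sum_j\int_{(\Omega^*)^c}|b(x)-b_{Q_j}|\,\mathbf G_j(x)w(x)\,dx$, where $\mathbf G_j(x)=\big(\sum_i|G^{(i)}_j(x)|^2\big)^{1/2}$ and $G^{(i)}_j(x)=\big(\iint_{\Gamma(x)}\sup_{\varphi}|\int_{Q_j}[\varphi_t(y-z)-\varphi_t(y-c_j)]\beta^{(i)}_j(z)\,dz|^2\frac{dydt}{t^{n+1}}\big)^{1/2}$. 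The main obstacle will be the pointwise decay estimate for the intrinsic square function of the mean-zero bump: for $x\notin\Omega^*$, with $r_j$ the radius of $Q_j$,
\[
\mathbf G_j(x)\le C\,\frac{r_j^{\alpha}}{|x-c_j|^{n+\alpha}}\int_{Q_j}\|\vec\beta_j(z)\|_{\ell^2}\,dz\le C\,\frac{r_j^{\alpha}}{|x-c_j|^{n+\alpha}}\int_{Q_j}g.
\]
To prove this I would use the Lipschitz-type bound $|\varphi_t(y-z)-\varphi_t(y-c_j)|\le t^{-n-\alpha}|z-c_j|^\alpha$ together with the support condition $\operatorname{supp}\varphi\subset\{|x|\le1\}$; these force $t\gtrsim|x-c_j|$ on the cone $\Gamma(x)$ once $x$ is well separated from $Q_j$, and the ensuing integration in $(y,t)$ is elementary. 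Granting the decay estimate, I would split $(\Omega^*)^c$ into the dyadic annuli $\{2^kQ_j^*\setminus 2^{k-1}Q_j^*\}_{k\ge1}$, use $\int_{2^{k+1}Q_j^*}|b-b_{Q_j}|w\lesssim(k+1)\|b\|_*\,w(2^{k+1}Q_j^*)$ together with \eqref{general weights}, and sum the convergent series $\sum_{k\ge1}(k+1)2^{-k\alpha}$ to obtain $\int_{(\Omega^*)^c}|b(x)-b_{Q_j}|\mathbf G_j(x)w(x)\,dx\lesssim\|b\|_*(\operatorname{ess\,inf}_{Q_j}w)\int_{Q_j}g\le\|b\|_*\int_{Q_j}\Phi(g)w$. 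Summing over the disjoint cubes $Q_j$ then closes the argument.
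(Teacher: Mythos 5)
Your proposal is correct and follows essentially the same route as the paper: the same Calder\'on--Zygmund decomposition of $\|\vec f\|_{\ell^2}$, the same good/bad split, the same use of the weighted $L^2$ commutator bound (via $A_1\subset A_2$) for the good part, the same splitting $b(x)-b(z)=(b(x)-b_{Q_i})-(b(z)-b_{Q_i})$ with the mean-zero correction for the bad part, the decay estimate $\lesssim \ell(Q_i)^\alpha|x-c_i|^{-n-\alpha}\int_{Q_i}\|\vec f\|_{\ell^2}$ from the Lipschitz bound on $\varphi$, the weighted weak $(1,1)$ estimate of Theorem~B for the term $\mathcal S_\alpha(\sum_i(b_{Q_i}-b)h_{ij})$, and the $\exp L$/$L\log L$ duality \eqref{weighted holder}--\eqref{weighted exp} combined with \eqref{equiv norm with weight} (choosing $\eta=\sigma$) exactly where the paper applies it. The only differences are presentational (you normalize $\sigma=1$ and fold the $|b-b_{2^{k+1}Q_i}|$ and $|b_{2^{k+1}Q_i}-b_{Q_i}|$ contributions into one estimate, whereas the paper keeps them as separate terms~I and~II), and you cite Theorem~\ref{mainthm:3} with $\theta\equiv1$ where the paper cites its Theorem~\ref{commutator thm}, which is the same statement.
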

For the endpoint estimates of commutators generated by $BMO(\mathbb R^n)$ functions and vector-valued intrinsic square functions in the Morrey type spaces associated to $\theta$, we will prove

\begin{theorem}\label{mainthm:5}
Let $0<\alpha\leq1$, $p=1$, $w\in A_1$ and $b\in BMO(\mathbb R^n)$. Assume that $\theta$ satisfies the $\mathcal D_\kappa$ condition $(\ref{D condition})$ with $0\leq\kappa<1$, then for any given $\sigma>0$ and any ball $B$ in $\mathbb R^n$, there exists a constant $C>0$ independent of $\vec{f}=(f_1,f_2,\ldots)$, $B$ and $\sigma>0$ such that
\begin{equation*}
\begin{split}
&\frac{1}{\theta(w(B))}\cdot w\bigg(\bigg\{x\in B:\bigg(\sum_{j}\big|\big[b,\mathcal S_{\alpha}\big](f_j)(x)\big|^2\bigg)^{1/2}>\sigma\bigg\}\bigg)\\
\leq &C\cdot\sup_B\left\{\frac{\Phi\Big(\frac{w(B)}{\theta(w(B))}\Big)}{w(B)}
\int_{B}\Phi\bigg(\frac{\|\vec{f}(x)\|_{\ell^2}}{\sigma}\bigg)\cdot w(x)\,dx\right\},
\end{split}
\end{equation*}
where $\Phi(t)=t\cdot(1+\log^+t)$ and $\big\|\vec{f}(x)\big\|_{\ell^2}=\Big(\sum_{j}|f_j(x)|^2\Big)^{1/2}$.
\end{theorem}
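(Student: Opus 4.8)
The plan is to localize and bootstrap from the weighted weak $L\log L$ estimate of Theorem \ref{mainthm:4}. I would fix a ball $B=B(x_0,r_B)$ and $\sigma>0$ and decompose $\vec f=\vec f\chi_{2B}+\vec f\chi_{(2B)^c}=:\vec f_0+\vec f_\infty$. Since $\vec g\mapsto\big(\sum_j|[b,\mathcal S_\alpha](g_j)|^2\big)^{1/2}$ is subadditive (the operator $[b,\mathcal S_\alpha]$ is sublinear and $\|\cdot\|_{\ell^2}$ obeys the triangle inequality), the superlevel set over $B$ at height $\sigma$ is contained in the union of the corresponding superlevel sets for $\vec f_0$ and $\vec f_\infty$ at height $\sigma/2$, so I would bound $\frac1{\theta(w(B))}\,w(\{\cdots\})\le \mathrm I+\mathrm{II}$ and treat the two pieces separately.

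For the local piece $\mathrm I$, apply Theorem \ref{mainthm:4} to $\vec f_0$ at height $\sigma/2$ and use the doubling property $\Phi(2t)\le C\Phi(t)$ to obtain $w(\{\cdots\}_0)\le C\int_{2B}\Phi(\|\vec f(x)\|_{\ell^2}/\sigma)\,w(x)\,dx$. To fit this under the right-hand side of the theorem it is enough to know that $\frac1{\theta(w(B))}\le C\,\frac{\Phi(w(2B)/\theta(w(2B)))}{w(2B)}$, which follows from $\Phi(s)\ge s$ together with $\theta(w(2B))\le C\theta(w(B))$ — the latter a consequence of the $\mathcal D_\kappa$ condition \eqref{D condition} and the doubling \eqref{weights} of $w\in A_1$; choosing $B'=2B$ in the supremum then controls $\mathrm I$.

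The far piece $\mathrm{II}$ is the heart of the argument. For $x\in B$ I would split $b(x)-b(z)=(b(x)-b_B)+(b_B-b(z))$ to reduce the commutator to ordinary intrinsic square functions, and then use the elementary off-diagonal size estimate: for $\vec h$ supported in $(2B)^c$ and $x\in B$,
\[
\Big(\sum_j|\mathcal S_\alpha(h_j)(x)|^2\Big)^{1/2}\le C\sum_{k\ge1}\frac1{|2^{k+1}B|}\int_{2^{k+1}B\setminus 2^kB}\|\vec h(z)\|_{\ell^2}\,dz ,
\]
which comes from $|\varphi_t*h_j(y)|\le t^{-n}\int|h_j|$, Minkowski's inequality in $\ell^2$, and the observation that a point of $\Gamma(x)$ contributing to the $k$-th annulus must satisfy $t\gtrsim 2^kr_B$. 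Applying this to $\vec f_\infty$ and to $(b-b_B)\vec f_\infty$, and writing $|b(z)-b_B|\le|b(z)-b_{2^{k+1}B}|+|b_{2^{k+1}B}-b_B|$ with $|b_{2^{k+1}B}-b_B|\le C(k+1)\|b\|_*$, would give for $x\in B$
\[
\Big(\sum_j|[b,\mathcal S_\alpha](f_j^\infty)(x)|^2\Big)^{1/2}\le |b(x)-b_B|\,\mathcal T_1+\mathcal T_2+\mathcal T_3 ,
\]
where, with $C_k:=2^{k+1}B\setminus 2^kB$, the constants are $\mathcal T_1=C\sum_k\frac1{|2^{k+1}B|}\int_{C_k}\|\vec f\|_{\ell^2}$, $\mathcal T_2=C\|b\|_*\sum_k\frac{k+1}{|2^{k+1}B|}\int_{C_k}\|\vec f\|_{\ell^2}$, and $\mathcal T_3=C\sum_k\frac1{|2^{k+1}B|}\int_{C_k}|b-b_{2^{k+1}B}|\,\|\vec f\|_{\ell^2}$; correspondingly $\mathrm{II}\le\mathrm{II}_1+\mathrm{II}_2+\mathrm{II}_3$, and $\mathrm{II}_2,\mathrm{II}_3$ are each either $0$ or $\frac{w(B)}{\theta(w(B))}$. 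The summability over $k$ is governed by one lemma: with $\beta_k:=\big[\Phi\big(w(2^{k+1}B)/\theta(w(2^{k+1}B))\big)\big]^{-1}$, the $\mathcal D_\kappa$ condition (used in the sharp form $\theta(w(2^{k+1}B))\le C\,\theta(w(B))\,(w(2^{k+1}B)/w(B))^\kappa$) and the reverse-doubling inequality \eqref{compare} for $A_\infty$ weights give $\beta_k\le\frac{\theta(w(2^{k+1}B))}{w(2^{k+1}B)}\le C\,\frac{\theta(w(B))}{w(B)}\,2^{-kn\delta(1-\kappa)}$, so $\sum_k(k+1)\beta_k\le C\,\frac{\theta(w(B))}{w(B)}$ — this is exactly where $\kappa<1$ enters. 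Writing $\mathcal R$ for the right-hand side of the theorem, its definition yields $\int_{2^{k+1}B}\Phi(\|\vec f\|_{\ell^2}/\sigma)\,w\le\mathcal R\,w(2^{k+1}B)\,\beta_k$; combining this with $t\le\Phi(t)$ and with the pointwise lower bound $w(z)\ge c\,w(B')/|B'|$ valid on every ball $B'$ (since $w\in A_1$) converts each annular average into $\frac1{|2^{k+1}B|}\int_{C_k}\|\vec f\|_{\ell^2}\le C\sigma\mathcal R\beta_k$. Hence $\mathcal T_1/\sigma,\mathcal T_2/\sigma\le C_b\,\mathcal R\,\frac{\theta(w(B))}{w(B)}$; and for $\mathcal T_3$, running the annular integrals of $|b-b_{2^{k+1}B}|\,\|\vec f\|_{\ell^2}$ through the generalized weighted H\"older inequality \eqref{weighted holder}, the $BMO$ bound \eqref{weighted exp}, and the Luxemburg-norm equivalences \eqref{equiv norm}, \eqref{equiv norm with weight} gives $\mathcal T_3/\sigma\le C_b$ times a quantity that is increasing in $\mathcal R\,\theta(w(B))/w(B)$ and vanishes as $\mathcal R\to0$. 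Finally: $\mathrm{II}_2$ and $\mathrm{II}_3$ are nonzero only when $\mathcal T_2$, respectively $\mathcal T_3$, exceeds $\sigma/6$, which by the preceding bounds forces $\frac{w(B)}{\theta(w(B))}\le C_b\mathcal R$, so $\mathrm{II}_2,\mathrm{II}_3\le C_b\mathcal R$; while $\mathrm{II}_1$ is handled by cases — if $\sigma/(6\mathcal T_1)<1$ the same forcing applies, and if $\sigma/(6\mathcal T_1)\ge1$ the weighted John--Nirenberg distributional estimate $w(\{x\in B:|b(x)-b_B|>\lambda\})\le Cw(B)\,e^{-c\lambda/\|b\|_*}$ (a consequence of \eqref{weighted exp}) together with $e^{-c/t}\le C_bt$ gives $\mathrm{II}_1\le C_b\,\frac{w(B)}{\theta(w(B))}\cdot\frac{\mathcal T_1}{\sigma}\le C_b\mathcal R$. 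Altogether $\mathrm{II}\le C_b\mathcal R$, which is the desired bound.

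I expect the main obstacle to be the far piece $\mathrm{II}$. The crucial structural point is that, in contrast to Calder\'on--Zygmund operators with smooth kernels, the intrinsic square function admits \emph{no} decay in its off-diagonal size estimate — each distant annulus $C_k$ contributes an undamped average of $\|\vec f\|_{\ell^2}$ — so the entire geometric gain needed to sum over $k$ must be manufactured out of the reverse-doubling of the $A_1$ weight together with the hypothesis $\kappa<1$; and the term $\mathcal T_3$, in which the oscillation of $b$ over the far annuli is paired with the $L\log L$ data there, must be estimated carefully enough that the Orlicz bookkeeping produces a convergent series rather than a divergent one.
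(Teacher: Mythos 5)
Your proposal follows the same skeleton as the paper's proof: the split of $\vec f$ at $2B$, the reduction of the local piece to Theorem~\ref{mainthm:4}, the commutator split $b(x)-b(z)=(b(x)-b_B)+(b_B-b(z))$, the annular off-diagonal size bound for $\mathcal S_\alpha$, the further split $|b-b_B|\le|b-b_{2^{k+1}B}|+|b_{2^{k+1}B}-b_B|$, and the summability lemma $\sum_k (k+1)\theta(w(2^{k+1}B))w(B)/\big(\theta(w(B))w(2^{k+1}B)\big)\lesssim1$ via $\mathcal D_\kappa$ and \eqref{compare}. The two genuine points of departure are: (i) you decompose the far piece $\mathrm{II}$ into superlevel sets and argue by ``forcing'' (if $\mathcal T_2$ or $\mathcal T_3$ exceeds $\sigma/6$ then the full $\frac{w(B)}{\theta(w(B))}$ is automatically dominated by $C\mathcal R$), whereas the paper applies Chebyshev once to the whole integral and keeps the pieces $K_3',K_4',K_5',K_6'$; and (ii) you control the $|b(x)-b_B|\mathcal T_1$ piece via the weighted John--Nirenberg \emph{distributional} bound with a case split, whereas the paper uses Chebyshev plus the integral form of John--Nirenberg through $RH_r$. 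These are acceptable variants.

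However, there is a gap in your treatment of $\mathcal T_3$ and hence $\mathrm{II}_3$. Your forcing argument for $\mathrm{II}_3$ rests on the asserted bound ``$\mathcal T_3/\sigma\le C_b\,g(s)$ with $s=\mathcal R\,\theta(w(B))/w(B)$, $g$ increasing and $g(0^+)=0$,'' but you never verify this. It does \emph{not} follow from \eqref{weighted holder}, \eqref{weighted exp} and \eqref{equiv norm with weight} with the natural test value $\eta_k=\sigma\,\theta(w(2^{k+1}B))/w(2^{k+1}B)$ (the choice the paper makes in $K_5'$): that choice gives
\begin{equation*}
\frac{\mathcal T_3}{\sigma}\ \lesssim\ \|b\|_*\,(1+\mathcal R)\,\frac{\theta(w(B))}{w(B)},
\end{equation*}
which tends to $C\|b\|_*\,\theta(w(B))/w(B)$ as $\mathcal R\to0$ rather than to $0$, so the forcing fails to produce $\frac{w(B)}{\theta(w(B))}\lesssim\mathcal R$ when $\mathcal R$ is small. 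To make your claim work you would need to optimize the Luxemburg infimum more carefully — essentially to show $\|\,\|\vec f\|_{\ell^2}\|_{L\log L(w),2^{k+1}B}\lesssim\sigma\,\mathcal R\beta_k\log(e/(\mathcal R\beta_k))$ when $\mathcal R\beta_k\le1$, where $\beta_k^{-1}=\Phi\big(w(2^{k+1}B)/\theta(w(2^{k+1}B))\big)$, and then sum the geometric series against the logarithm — none of which is in the write-up. (The paper sidesteps this by bounding $K_5'$ directly and absorbing the constant, a step which is itself rather delicate and which the paper's own ``Further remark'' flags; you should either import that argument explicitly or supply the optimized-$\eta$ calculation sketched above.) As written, the $\mathcal T_3$ bound is the one load-bearing step that is missing.

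Smaller remarks: your pointwise lower bound $w(z)\ge c\,w(B')/|B'|$ is exactly the $A_1$ condition in the form used in \eqref{Iprime2}, and your $\beta_k$ decay is precisely \eqref{theta2}/\eqref{theta4}; it would be cleaner to cite these rather than re-derive. Also, you should state explicitly that $\mathcal T_1,\mathcal T_2,\mathcal T_3$ do not depend on $x\in B$ (which is what makes $\mathrm{II}_2,\mathrm{II}_3$ be $0$ or $w(B)/\theta(w(B))$), and that $\Phi(2t)\le C\Phi(t)$ is being used in the local piece.
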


In particular, if we take $\theta(x)=x^\kappa$ with $0<\kappa<1$, then we immediately
get the following strong type estimates and endpoint estimates of vector-valued intrinsic square functions and commutators in the weighted Morrey spaces $L^{p,\kappa}(w)$ for all $0<\kappa<1$ and $1\leq p<\infty$.

\begin{corollary}
Let $0<\alpha\le1$, $1<p<\infty$, $0<\kappa<1$ and $w\in A_p$. Then there is a constant $C>0$ independent of $\vec{f}=(f_1,f_2,\ldots)$ such that
\begin{equation*}
\bigg\|\bigg(\sum_{j}\big|\mathcal S_\alpha(f_j)\big|^2\bigg)^{1/2}\bigg\|_{L^{p,\kappa}(w)}
\leq C\bigg\|\bigg(\sum_{j}\big|f_j\big|^2\bigg)^{1/2}\bigg\|_{L^{p,\kappa}(w)}.
\end{equation*}
\end{corollary}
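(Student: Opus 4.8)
The plan is to obtain this corollary as an immediate specialization of Theorem \ref{mainthm:1}, by choosing the growth function $\theta(x)=x^{\kappa}$. First I would check that this $\theta$ is admissible in the sense required by Theorem \ref{mainthm:1}, namely that it is positive and increasing on $(0,+\infty)$ (clear) and that it satisfies the $\mathcal D_\kappa$ condition \eqref{D condition} with the same exponent $0<\kappa<1$. For this particular $\theta$ one has $\theta(\xi)/\xi^{\kappa}\equiv 1$, so the ratio $\theta(\xi)/\xi^{\kappa}$ is constant in $\xi$ and the inequality $\theta(\xi)/\xi^{\kappa}\le C\cdot\theta(\xi')/(\xi')^{\kappa}$ holds trivially with $C=1$ for every $0<\xi'<\xi<+\infty$; in particular $0\le\kappa<1$, so the hypotheses of Theorem \ref{mainthm:1} are met.

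Next I would invoke the identification recorded in the second bullet following Definition 2.4: with $\theta(x)=x^{\kappa}$, the Morrey type space $\mathcal M^{p,\theta}(w)$ is precisely the weighted Morrey space $L^{p,\kappa}(w)$ of Komori and Shirai from Definition 2.1, with coincidence of the norms,
\[
\big\|f\big\|_{\mathcal M^{p,\theta}(w)}=\sup_{B}\left(\frac{1}{w(B)^{\kappa}}\int_B|f(x)|^{p}w(x)\,dx\right)^{1/p}=\big\|f\big\|_{L^{p,\kappa}(w)}.
\]
Applying this to the vector-valued quantities $\big(\sum_j|\mathcal S_\alpha(f_j)|^2\big)^{1/2}$ and $\big(\sum_j|f_j|^2\big)^{1/2}$ turns the conclusion of Theorem \ref{mainthm:1} into exactly the stated inequality in $L^{p,\kappa}(w)$, with the constant $C$ inherited from Theorem \ref{mainthm:1} and hence independent of $\vec f=(f_1,f_2,\ldots)$.

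There is essentially no obstacle here: the corollary is a pure specialization, and the only content is the bookkeeping that $\theta(x)=x^{\kappa}$ lies within the scope of the $\mathcal D_\kappa$ condition and that the two Morrey norms agree on the nose. If one wished to be fully self-contained, the single routine verification to include is the constancy of $\theta(\xi)/\xi^{\kappa}$ noted above; everything else is a direct substitution into Theorem \ref{mainthm:1}.
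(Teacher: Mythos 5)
Your proof is correct and follows exactly the route the paper takes: specialize Theorem \ref{mainthm:1} by choosing $\theta(x)=x^{\kappa}$, note that $\theta(\xi)/\xi^{\kappa}\equiv 1$ so the $\mathcal D_\kappa$ condition holds trivially, and use the identification $\mathcal M^{p,\theta}(w)=L^{p,\kappa}(w)$ recorded after Definition 2.4. There is nothing to add.
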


\begin{corollary}
Let $0<\alpha\le1$, $p=1$, $0<\kappa<1$ and $w\in A_1$. Then there is a constant $C>0$ independent of $\vec{f}=(f_1,f_2,\ldots)$ such that
\begin{equation*}
\bigg\|\bigg(\sum_{j}\big|\mathcal S_\alpha(f_j)\big|^2\bigg)^{1/2}\bigg\|_{WL^{1,\kappa}(w)}
\leq C\bigg\|\bigg(\sum_{j}\big|f_j\big|^2\bigg)^{1/2}\bigg\|_{L^{1,\kappa}(w)}.
\end{equation*}
\end{corollary}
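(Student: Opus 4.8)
The plan is to obtain this corollary as an immediate specialization of Theorem~\ref{mainthm:2} to the weight-growth function $\theta(x)=x^{\kappa}$. The first---and essentially only---point to check is that such a $\theta$ is admissible in Theorem~\ref{mainthm:2}, i.e.\ that it satisfies the $\mathcal D_\kappa$ condition~\eqref{D condition}. This is trivial: for $\theta(x)=x^{\kappa}$ one has $\theta(\xi)/\xi^{\kappa}\equiv 1$, so \eqref{D condition} holds with constant $C=1$ for the very same exponent $\kappa\in(0,1)$.

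Next I would use the identification of the spaces recorded in the remarks following the definitions in Section~2.2: when $\theta(x)=x^{\kappa}$ with $0<\kappa<1$ one has $\theta(w(B))=w(B)^{\kappa}$, hence by comparing the defining inequalities the generalized weighted Morrey space $\mathcal M^{1,\theta}(w)$ coincides, with identical norm, with the weighted Morrey space $L^{1,\kappa}(w)$ of Definition~2.1, and likewise $W\mathcal M^{1,\theta}(w)$ coincides with $WL^{1,\kappa}(w)$. Substituting these identifications into the conclusion of Theorem~\ref{mainthm:2} gives precisely
\[
\Bigl\|\bigl(\textstyle\sum_{j}|\mathcal S_\alpha(f_j)|^2\bigr)^{1/2}\Bigr\|_{WL^{1,\kappa}(w)}\le C\,\Bigl\|\bigl(\textstyle\sum_{j}|f_j|^2\bigr)^{1/2}\Bigr\|_{L^{1,\kappa}(w)},
\]
with $C$ the constant furnished by Theorem~\ref{mainthm:2} for this particular $\theta$, so that it depends only on $n$, $\alpha$, $\kappa$ and the $A_1$ constant of $w$, and in particular is independent of $\vec f=(f_1,f_2,\dots)$.

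Since the whole argument is a pure specialization, I do not expect any genuine obstacle: all of the analytic work---the Calder\'on--Zygmund type splitting $f_j=f_j\chi_{2B}+f_j\chi_{(2B)^c}$ over a fixed ball $B$, the use of the weak $(1,1)$ bound of Theorem~B on the local part together with the doubling bound~\eqref{weights} for $w\in A_1$, and the pointwise control of the tail on $B$ together with the growth estimate~\eqref{general weights} $w(2^{j+1}B)\lesssim 2^{(j+1)n}w(B)$ and the summation of the resulting series---has already been carried out in the proof of Theorem~\ref{mainthm:2}. One could of course rerun that proof verbatim with $\theta(x)=x^{\kappa}$ to obtain a self-contained argument, but nothing new would be required.
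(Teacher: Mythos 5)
Your proposal is correct and is exactly how the paper obtains this corollary: it is a direct specialization of Theorem~\ref{mainthm:2} to $\theta(x)=x^{\kappa}$, using that this $\theta$ trivially satisfies the $\mathcal D_\kappa$ condition and that under this choice $\mathcal M^{1,\theta}(w)=L^{1,\kappa}(w)$ and $W\mathcal M^{1,\theta}(w)=WL^{1,\kappa}(w)$, as already noted in the remarks following Definition~2.4.
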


\begin{corollary}
Let $0<\alpha\le1$, $1<p<\infty$, $0<\kappa<1$, $w\in A_p$ and $b\in BMO(\mathbb R^n)$. Then there is a constant $C>0$ independent of $\vec{f}=(f_1,f_2,\ldots)$ such that
\begin{equation*}
\bigg\|\bigg(\sum_{j}\big|\big[b,\mathcal S_\alpha\big](f_j)\big|^2\bigg)^{1/2}\bigg\|_{L^{p,\kappa}(w)}
\leq C \bigg\|\bigg(\sum_{j}\big|f_j\big|^2\bigg)^{1/2}\bigg\|_{L^{p,\kappa}(w)}.
\end{equation*}
\end{corollary}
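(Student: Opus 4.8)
The plan is to deduce this corollary directly from Theorem \ref{mainthm:3} by specializing the growth function $\theta$. First I would check that the power function $\theta(x)=x^\kappa$, with $0<\kappa<1$, is an admissible choice: it is clearly positive and strictly increasing on $(0,+\infty)$, and it satisfies the $\mathcal D_\kappa$ condition $(\ref{D condition})$ in the most trivial way possible, since $\theta(\xi)/\xi^\kappa\equiv 1$ for every $\xi>0$, so the required inequality $\theta(\xi)/\xi^\kappa\le C\cdot\theta(\xi')/(\xi')^\kappa$ holds with $C=1$ for all $0<\xi'<\xi<+\infty$.

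Next I would invoke the identification recorded right after Definition 4: for $\theta(x)=x^\kappa$ with $0<\kappa<1$ the generalized weighted Morrey space $\mathcal M^{p,\theta}(w)$ is nothing but the weighted Morrey space $L^{p,\kappa}(w)$ of Komori and Shirai, and the two norms literally coincide, because
\begin{equation*}
\big\|f\big\|_{\mathcal M^{p,\theta}(w)}=\sup_B\left(\frac{1}{\theta(w(B))}\int_B|f(x)|^pw(x)\,dx\right)^{1/p}
=\sup_B\left(\frac{1}{w(B)^\kappa}\int_B|f(x)|^pw(x)\,dx\right)^{1/p}=\big\|f\big\|_{L^{p,\kappa}(w)}.
\end{equation*}

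Finally, under the stated hypotheses $0<\alpha\le1$, $1<p<\infty$, $w\in A_p$ and $b\in BMO(\mathbb R^n)$, I would apply Theorem \ref{mainthm:3} with this particular $\theta$ to the sequence $\vec f=(f_1,f_2,\ldots)$. The bound it provides for $\big(\sum_j|[b,\mathcal S_\alpha](f_j)|^2\big)^{1/2}$ in $\mathcal M^{p,\theta}(w)$, once the two norms above are identified on both sides, is exactly the asserted strong type estimate in $L^{p,\kappa}(w)$, with the same constant $C$ (which is independent of $\vec f$). Since every step is an elementary verification, there is no genuine obstacle here; the only point that has to be checked at all — and it is immediate — is that the power weight $\theta(x)=x^\kappa$ indeed lies in the class of growth functions covered by Theorem \ref{mainthm:3}.
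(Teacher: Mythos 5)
Your proposal is correct and follows exactly the route the paper takes: specializing Theorem \ref{mainthm:3} to $\theta(x)=x^\kappa$ with $0<\kappa<1$, verifying the $\mathcal D_\kappa$ condition (trivially, with $C=1$), and using the identification $\mathcal M^{p,\theta}(w)=L^{p,\kappa}(w)$ noted after Definition 2.4. No further comment is needed.
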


\begin{corollary}
Let $0<\alpha\leq1$, $p=1$, $0<\kappa<1$, $w\in A_1$ and $b\in BMO(\mathbb R^n)$. Then for any given $\sigma>0$ and any ball $B$ in $\mathbb R^n$, there exists a constant $C>0$ independent of $\vec{f}=(f_1,f_2,\ldots)$, $B$ and $\sigma>0$ such that
\begin{equation*}
\begin{split}
&\frac{1}{w(B)^\kappa}\cdot w\bigg(\bigg\{x\in B:\bigg(\sum_{j}\big|\big[b,\mathcal S_{\alpha}\big](f_j)(x)\big|^2\bigg)^{1/2}>\sigma\bigg\}\bigg)\\
\leq& C\cdot\sup_B\left\{\frac{\Phi\big(w(B)^{1-\kappa}\big)}{w(B)}
\int_{B}\Phi\bigg(\frac{\|\vec{f}(x)\|_{\ell^2}}{\sigma}\bigg)\cdot w(x)\,dx\right\},
\end{split}
\end{equation*}
where $\Phi(t)=t\cdot(1+\log^+t)$ and $\big\|\vec{f}(x)\big\|_{\ell^2}=\Big(\sum_{j}|f_j(x)|^2\Big)^{1/2}$.
\end{corollary}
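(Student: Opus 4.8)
The plan is to derive this corollary directly from Theorem \ref{mainthm:5} by specializing the growth function to $\theta(x)=x^\kappa$ with $0<\kappa<1$. First I would verify that $\theta(x)=x^\kappa$ satisfies the $\mathcal D_\kappa$ condition $(\ref{D condition})$: indeed $\theta(\xi)/\xi^\kappa=1$ is constant, so the inequality holds trivially with constant $C=1$. Hence all hypotheses of Theorem \ref{mainthm:5} are met with this choice of $\theta$, and we may invoke its conclusion. With $\theta(w(B))=w(B)^\kappa$, the left-hand side of the estimate in Theorem \ref{mainthm:5} becomes precisely
\begin{equation*}
\frac{1}{w(B)^\kappa}\cdot w\bigg(\bigg\{x\in B:\bigg(\sum_{j}\big|\big[b,\mathcal S_{\alpha}\big](f_j)(x)\big|^2\bigg)^{1/2}>\sigma\bigg\}\bigg),
\end{equation*}
which matches the left-hand side of the corollary.

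It remains to rewrite the right-hand side. The quantity $w(B)/\theta(w(B))$ appearing inside $\Phi$ becomes $w(B)/w(B)^\kappa = w(B)^{1-\kappa}$, so $\Phi\big(w(B)/\theta(w(B))\big)=\Phi\big(w(B)^{1-\kappa}\big)$. Substituting this into the supremum on the right-hand side of Theorem \ref{mainthm:5} yields exactly
\begin{equation*}
C\cdot\sup_B\left\{\frac{\Phi\big(w(B)^{1-\kappa}\big)}{w(B)}\int_{B}\Phi\bigg(\frac{\|\vec{f}(x)\|_{\ell^2}}{\sigma}\bigg)\cdot w(x)\,dx\right\},
\end{equation*}
which is the right-hand side claimed in the corollary. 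Since $w\in A_1\subseteq A_p$ and $b\in BMO(\mathbb R^n)$ are assumed, and the constant $C$ from Theorem \ref{mainthm:5} is independent of $\vec{f}$, $B$, and $\sigma$, the same independence is inherited here, completing the proof.

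There is essentially no obstacle in this argument — it is a pure specialization of the general theorem. The only point requiring a word of care is confirming that $x^\kappa$ genuinely falls under the $\mathcal D_\kappa$ framework (it does, with the defining ratio being identically $1$), and that the substitutions $\theta(w(B))=w(B)^\kappa$ are made consistently in both the normalizing factor and inside the Young function $\Phi$. All the analytic content — the weighted weak $L\log L$ bound, the decomposition of the commutator, and the handling of the Morrey normalization — has already been carried out in the proof of Theorem \ref{mainthm:5}, so this corollary is immediate.
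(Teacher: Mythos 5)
Your proof is correct and is exactly the approach the paper takes: the paper simply states that Corollary 2.9 follows immediately from Theorem \ref{mainthm:5} by taking $\theta(x)=x^\kappa$, and your verification that $x^\kappa$ satisfies the $\mathcal D_\kappa$ condition and the resulting substitutions $\theta(w(B))=w(B)^\kappa$, $w(B)/\theta(w(B))=w(B)^{1-\kappa}$ make this explicit.
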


We can also take $w$ to be a constant function, then we immediately get the following unweighted results.

\begin{corollary}
Let $0<\alpha\le1$ and $1<p<\infty$. Assume that $\theta$ satisfies the $\mathcal D_\kappa$ condition $(\ref{D condition})$ with $0\leq\kappa<1$, then there is a constant $C>0$ independent of $\vec{f}=(f_1,f_2,\ldots)$ such that
\begin{equation*}
\bigg\|\bigg(\sum_{j}\big|\mathcal S_\alpha(f_j)\big|^2\bigg)^{1/2}\bigg\|_{\mathcal M^{p,\theta}}
\leq C \bigg\|\bigg(\sum_{j}\big|f_j\big|^2\bigg)^{1/2}\bigg\|_{\mathcal M^{p,\theta}}.
\end{equation*}
\end{corollary}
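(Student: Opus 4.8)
The plan is to read off this corollary directly from Theorem \ref{mainthm:1} by taking the weight $w$ to be a constant function. The first thing I would check is that a constant weight is admissible in Theorem \ref{mainthm:1}, i.e.\ that it lies in $A_p$ for every $1<p<\infty$. This is immediate: for $w\equiv 1$ and any ball $B$,
\[
\left(\frac1{|B|}\int_B w(x)\,dx\right)^{1/p}\left(\frac1{|B|}\int_B w(x)^{-p'/p}\,dx\right)^{1/p'}=1,
\]
so $w\equiv1\in A_p$ with Muckenhoupt constant $1$, and hence Theorem \ref{mainthm:1} applies with this choice of $w$.

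Next I would identify the two function spaces. With $w\equiv1$ one has $w(B)=|B|$ and $w(x)\,dx=dx$, so for every ball $B$ the quantity $\big(\theta(w(B))^{-1}\int_B|f(x)|^pw(x)\,dx\big)^{1/p}$ is exactly $\big(\theta(|B|)^{-1}\int_B|f(x)|^p\,dx\big)^{1/p}$; taking the supremum over all balls gives $\|f\|_{\mathcal M^{p,\theta}(w)}=\|f\|_{\mathcal M^{p,\theta}}$. Applying the inequality of Theorem \ref{mainthm:1} to $\vec f=(f_1,f_2,\dots)$ with this weight therefore produces precisely the asserted estimate, and the constant $C$ may be taken the same as there (it depends only on $n$, $p$, $\alpha$, $\kappa$ and the $\mathcal D_\kappa$ constant of $\theta$, since $[w]_{A_p}=1$).

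If instead one wishes to allow $w$ to be an arbitrary positive constant $c$, the extra observation needed is that the $\mathcal D_\kappa$ condition $(\ref{D condition})$ forces $\theta(c\,\xi)\approx\theta(\xi)$ uniformly in $\xi>0$: when $c\ge1$, monotonicity of $\theta$ gives $\theta(\xi)\le\theta(c\xi)$ while $(\ref{D condition})$ applied to the pair $\xi<c\xi$ gives $\theta(c\xi)\le C\,c^\kappa\theta(\xi)$, and the case $0<c<1$ is symmetric. Hence $\mathcal M^{p,\theta}(c)$ coincides with $\mathcal M^{p,\theta}(\mathbb R^n)$ with equivalent norms, and the conclusion is unchanged.

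I do not expect any genuine obstacle here: the corollary is a direct specialization of the main theorem, and the only points to verify --- that a constant weight lies in $A_p$ and that the weighted Morrey type norm collapses to the unweighted one --- are both immediate from the definitions.
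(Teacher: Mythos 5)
Your proof is correct and follows exactly the route the paper intends: the paper obtains this corollary by taking $w$ to be a constant function in Theorem \ref{mainthm:1}, which is precisely what you do. Your additional remark verifying that an arbitrary positive constant $c$ gives an equivalent norm via the $\mathcal D_\kappa$ condition is a small but welcome extra check that the paper leaves implicit.
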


\begin{corollary}
Let $0<\alpha\le1$ and $p=1$. Assume that $\theta$ satisfies the $\mathcal D_\kappa$ condition $(\ref{D condition})$ with $0\leq\kappa<1$, then there is a constant $C>0$ independent of $\vec{f}=(f_1,f_2,\ldots)$ such that
\begin{equation*}
\bigg\|\bigg(\sum_{j}\big|\mathcal S_\alpha(f_j)\big|^2\bigg)^{1/2}\bigg\|_{W\mathcal M^{1,\theta}}
\leq C \bigg\|\bigg(\sum_{j}\big|f_j\big|^2\bigg)^{1/2}\bigg\|_{\mathcal M^{1,\theta}}.
\end{equation*}
\end{corollary}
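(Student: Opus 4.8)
The plan is to read the statement off from Theorem~\ref{mainthm:2} by specializing the weight. The constant function $w\equiv1$ belongs to $A_1$ (its $A_1$ constant is $1$), so Theorem~\ref{mainthm:2} applies with this $w$. For this choice $w(B)=|B|$ for every ball $B$ and $w(E)=|E|$ for every measurable set $E$, hence $\theta(w(B))=\theta(|B|)$ and the weighted (weak) Morrey norms $\|\cdot\|_{\mathcal M^{1,\theta}(w)}$, $\|\cdot\|_{W\mathcal M^{1,\theta}(w)}$ collapse to $\|\cdot\|_{\mathcal M^{1,\theta}}$, $\|\cdot\|_{W\mathcal M^{1,\theta}}$. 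Applying Theorem~\ref{mainthm:2} with $g=\big(\sum_j|f_j|^2\big)^{1/2}$ then gives exactly the claimed inequality, with the same constant $C$.

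For completeness I indicate a self-contained argument, which is the $w\equiv1$ trace of the proof of Theorem~\ref{mainthm:2}. Fix a ball $B=B(x_0,r_B)$ and a height $\sigma>0$, and split $\vec f=\vec f\,\chi_{2B}+\sum_{k\ge1}\vec f\,\chi_{2^{k+1}B\setminus 2^kB}$. For the local part, the unweighted weak $(1,1)$ estimate for vector-valued intrinsic square functions (the case $w\equiv1$ of Theorem~B) gives
\begin{equation*}
\big|\big\{x\in B:\mathcal S_\alpha(\vec f\,\chi_{2B})(x)>\sigma/2\big\}\big|\le\frac{C}{\sigma}\int_{2B}\|\vec f(x)\|_{\ell^2}\,dx\le\frac{C}{\sigma}\,\theta(|2B|)\,\big\|\,\|\vec f\|_{\ell^2}\,\big\|_{\mathcal M^{1,\theta}};
\end{equation*}
since $\theta$ is increasing and the $\mathcal D_\kappa$ condition yields $\theta(|2B|)=\theta(2^n|B|)\le C\,\theta(|B|)$, multiplying by $\sigma/\theta(|B|)$ controls this term by $C\big\|\,\|\vec f\|_{\ell^2}\,\big\|_{\mathcal M^{1,\theta}}$.

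For each annular piece, if $x\in B$, $(y,t)\in\Gamma(x)$ and $\varphi_t(y-z)\ne0$ for some $z\in 2^{k+1}B\setminus 2^kB$ and some $\varphi\in\mathcal C_\alpha$, then $|y-z|\le t$ together with $|z-x_0|\ge 2^kr_B$ forces $t\ge c\,2^kr_B$; combining the crude bound $|\varphi_t*f_j(y)|\le C\,t^{-n}\int_{2^{k+1}B}|f_j|$ with this truncation of $\Gamma(x)$ and integrating $dy\,dt/t^{n+1}$ gives
\begin{equation*}
\mathcal S_\alpha\big(f_j\,\chi_{2^{k+1}B\setminus 2^kB}\big)(x)\le\frac{C}{|2^kB|}\int_{2^{k+1}B}|f_j(z)|\,dz .
\end{equation*}
Using sublinearity of $\mathcal S_\alpha$, then Minkowski's inequality in $j$, then the defining bound of $\mathcal M^{1,\theta}$ on $2^{k+1}B$, one obtains $\mathcal S_\alpha(\vec f\,\chi_{(2B)^c})(x)\le C\sum_{k\ge1}\big(\theta(|2^{k+1}B|)/|2^{k+1}B|\big)\,\big\|\,\|\vec f\|_{\ell^2}\,\big\|_{\mathcal M^{1,\theta}}$ for every $x\in B$. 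The $\mathcal D_\kappa$ condition gives $\theta(|2^{k+1}B|)\le C\,\theta(|B|)\,2^{(k+1)n\kappa}$, so the $k$-th term is $\lesssim 2^{-(k+1)n(1-\kappa)}\,\theta(|B|)/|B|$, and since $\kappa<1$ the series converges; hence $\mathcal S_\alpha(\vec f\,\chi_{(2B)^c})(x)\le M:=C\,\big(\theta(|B|)/|B|\big)\,\big\|\,\|\vec f\|_{\ell^2}\,\big\|_{\mathcal M^{1,\theta}}$ uniformly on $B$. Consequently the set $\{x\in B:\mathcal S_\alpha(\vec f\,\chi_{(2B)^c})(x)>\sigma/2\}$ is empty when $\sigma>2M$ and has measure at most $|B|$ otherwise, so $\sigma\cdot|\{\cdots\}|/\theta(|B|)\le 2M|B|/\theta(|B|)=2C\big\|\,\|\vec f\|_{\ell^2}\,\big\|_{\mathcal M^{1,\theta}}$. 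Adding the two contributions and taking the supremum over $\sigma>0$ and over all balls $B$ finishes the argument.

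The main difficulty here is essentially nonexistent: the corollary is a direct specialization of Theorem~\ref{mainthm:2}, and even the self-contained route is routine. If one does carry out the direct proof, the only steps needing care are the truncation of the cone $\Gamma(x)$ to heights $t\gtrsim 2^kr_B$ for the annular pieces and the verification that the resulting decay $2^{-(k+1)n(1-\kappa)}$ of the tail dominates the growth $\theta(|2^{k+1}B|)/\theta(|B|)$ allowed by $\mathcal D_\kappa$ — which is precisely where $\kappa<1$ enters, and which is already built into the proof of Theorem~\ref{mainthm:2}.
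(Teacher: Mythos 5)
Your proposal is correct and matches the paper's route exactly: the paper obtains this corollary precisely by taking $w$ to be a constant function in Theorem~\ref{mainthm:2}, under which $w(B)=|B|$ and the weighted (weak) Morrey norms reduce to the unweighted ones. Your optional self-contained argument is likewise just the $w\equiv1$ trace of the paper's proof of Theorem~\ref{mainthm:2}, using the same cone-truncation estimate $t\gtrsim 2^{k}r_B$ for the annular pieces and the same $\mathcal D_\kappa$-plus-geometric-decay summation.
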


\begin{corollary}
Let $0<\alpha\le1$, $1<p<\infty$ and $b\in BMO(\mathbb R^n)$. Assume that $\theta$ satisfies the $\mathcal D_\kappa$ condition $(\ref{D condition})$ with $0\leq\kappa<1$, then there is a constant $C>0$ independent of $\vec{f}=(f_1,f_2,\ldots)$ such that
\begin{equation*}
\bigg\|\bigg(\sum_{j}\big|\big[b,\mathcal S_\alpha\big](f_j)\big|^2\bigg)^{1/2}\bigg\|_{\mathcal M^{p,\theta}}
\leq C \bigg\|\bigg(\sum_{j}\big|f_j\big|^2\bigg)^{1/2}\bigg\|_{\mathcal M^{p,\theta}}.
\end{equation*}
\end{corollary}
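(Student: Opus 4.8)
The plan is to obtain this corollary directly from Theorem \ref{mainthm:3} by specializing the weight to the constant function $w\equiv1$. The first step is to check that $w\equiv1$ is an admissible weight: for every ball $B$ the $A_p$ product in the definition of the Muckenhoupt class equals $1\cdot1=1$, so $1\in A_p$ for all $1<p<\infty$ with constant $C=1$. The hypothesis on $\theta$ in the corollary (the $\mathcal D_\kappa$ condition with $0\le\kappa<1$) is literally the hypothesis required by Theorem \ref{mainthm:3}, so all assumptions of that theorem hold for this choice of $w$.

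The second step is to identify the norms. When $w\equiv1$ one has $w(B)=|B|$ and $w(x)\,dx=dx$, hence for every ball $B$
\[
\left(\frac{1}{\theta(w(B))}\int_B|f(x)|^pw(x)\,dx\right)^{1/p}=\left(\frac{1}{\theta(|B|)}\int_B|f(x)|^p\,dx\right)^{1/p};
\]
taking the supremum over all balls $B$ shows that $\mathcal M^{p,\theta}(w)$ and the unweighted space $\mathcal M^{p,\theta}(\mathbb R^n)$ coincide, with equal norms. (For a general constant weight $w\equiv c>0$ the same holds up to a $c$-dependent constant: the $\mathcal D_\kappa$ condition forces $\theta(c\,|B|)\approx\theta(|B|)$, and the extra factor $c$ cancels from both sides of the desired inequality.) Applying Theorem \ref{mainthm:3} with $w\equiv1$ to the sequence $\vec{f}=(f_1,f_2,\ldots)$ and rewriting both sides with $\|\cdot\|_{\mathcal M^{p,\theta}}$ in place of $\|\cdot\|_{\mathcal M^{p,\theta}(w)}$ then yields exactly the asserted estimate, with the same constant $C$.

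I do not expect any genuine obstacle here, since the statement is contained in Theorem \ref{mainthm:3}; the only points to verify --- that constant weights belong to $A_p$ and that the weighted Morrey norm reduces to the unweighted one --- are both immediate. Were one to insist on a self-contained argument not invoking the weighted theorem, the substantive step would be a pointwise domination of $[b,\mathcal S_\alpha](f_j)$ by a composition of Hardy--Littlewood maximal operators times $\|b\|_*$ (this is where $b\in BMO(\mathbb R^n)$ enters), followed by the $\mathcal M^{p,\theta}(\mathbb R^n)$-boundedness of that maximal composition together with the vector-valued (Fefferman--Stein type) inequality in $\ell^2$; but that route is unnecessary for the present corollary.
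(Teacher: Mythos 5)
Your proposal is correct and follows exactly the paper's route: the paper states that Corollaries 2.10--2.13 follow by taking $w$ to be a constant function in Theorems 2.1--2.5, which is precisely your specialization of Theorem \ref{mainthm:3} to $w\equiv1$. The two verifications you supply (that $1\in A_p$ and that $\mathcal M^{p,\theta}(1)=\mathcal M^{p,\theta}(\mathbb R^n)$ with equal norms) are the only points needed, and you handle them correctly.
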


\begin{corollary}
Let $0<\alpha\leq1$, $p=1$ and $b\in BMO(\mathbb R^n)$. Assume that $\theta$ satisfies the $\mathcal D_\kappa$ condition $(\ref{D condition})$ with $0\leq\kappa<1$, then for any given $\sigma>0$ and any ball $B$ in $\mathbb R^n$, there exists a constant $C>0$ independent of $\vec{f}=(f_1,f_2,\ldots)$, $B$ and $\sigma>0$ such that
\begin{equation*}
\begin{split}
&\frac{1}{\theta(|B|)}\cdot\bigg|\bigg\{x\in B:\bigg(\sum_{j}\big|\big[b,\mathcal S_{\alpha}\big](f_j)(x)\big|^2\bigg)^{1/2}>\sigma\bigg\}\bigg|\\
\leq &C\cdot\sup_B\left\{\frac{\Phi\Big(\frac{|B|}{\theta(|B|)}\Big)}{|B|}
\int_{B}\Phi\bigg(\frac{\|\vec{f}(x)\|_{\ell^2}}{\sigma}\bigg)dx\right\},
\end{split}
\end{equation*}
where $\Phi(t)=t\cdot(1+\log^+t)$ and $\big\|\vec{f}(x)\big\|_{\ell^2}=\Big(\sum_{j}|f_j(x)|^2\Big)^{1/2}$.
\end{corollary}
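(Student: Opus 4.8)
The plan is to obtain this statement as an immediate specialization of Theorem~\ref{mainthm:5}. First I would note that the constant function $w\equiv1$ belongs to the Muckenhoupt class $A_1$, since the defining inequality $\frac1{|B|}\int_B w\,dx\le C\cdot\mathrm{ess\,inf}_{x\in B}w(x)$ holds trivially (with $C=1$). Hence Theorem~\ref{mainthm:5} may be applied with this choice of weight, for every $0<\alpha\le1$, every $b\in BMO(\mathbb R^n)$, and every $\theta$ satisfying the $\mathcal D_\kappa$ condition $(\ref{D condition})$ with $0\le\kappa<1$.

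Next I would record the elementary identifications that occur when $w\equiv1$: one has $w(B)=|B|$ for every ball $B$ and $w(E)=|E|$ for every measurable set $E\subseteq\mathbb R^n$, so that the weighted quantities appearing in Theorem~\ref{mainthm:5} turn into their unweighted analogues. Concretely, the left-hand side
\[
\frac{1}{\theta(w(B))}\cdot w\Big(\big\{x\in B:\big[b,\mathcal S_\alpha\big](\vec{f})(x)>\sigma\big\}\Big)
\]
becomes $\dfrac{1}{\theta(|B|)}\big|\{x\in B:\big[b,\mathcal S_\alpha\big](\vec{f})(x)>\sigma\}\big|$, while on the right-hand side $\dfrac{\Phi(w(B)/\theta(w(B)))}{w(B)}$ becomes $\dfrac{\Phi(|B|/\theta(|B|))}{|B|}$ and $\int_B\Phi(\|\vec{f}(x)\|_{\ell^2}/\sigma)\,w(x)\,dx$ becomes $\int_B\Phi(\|\vec{f}(x)\|_{\ell^2}/\sigma)\,dx$. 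Substituting these into the conclusion of Theorem~\ref{mainthm:5} yields exactly the asserted inequality, with the very same constant $C$, which remains independent of $\vec{f}$, $B$ and $\sigma$. No further work is required; this is precisely the reasoning indicated by the remark preceding the corollary that one may take $w$ to be a constant function.

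For completeness I note the route one would follow to prove the corollary \emph{without} invoking the weighted theorem, since that is where the only genuine work would lie. One would replace Theorem~\ref{mainthm:4} by its $w\equiv1$ case, and then, for a fixed ball $B=B(x_0,r_B)$, split $\vec{f}=\vec{f}\cdot\chi_{2B}+\vec{f}\cdot\chi_{(2B)^c}$. The local piece is handled by the unweighted weak $L\log L$ estimate together with the $\mathcal D_\kappa$ condition on $\theta$ and the elementary inequality $\Phi(st)\le\Phi(s)\Phi(t)$, valid for $\Phi(t)=t(1+\log^+t)$; the global piece is estimated pointwise, using the support condition on $\varphi_t$ (so that $(y,t)\in\Gamma(x)$ and $\varphi_t(y-z)\ne0$ force $|x-z|<2t$), decomposing $(2B)^c$ into annuli $2^{j+1}B\setminus2^jB$, controlling $|b(x)-b(z)|$ through the averages $b_{2^{j+1}B}$ with the bound $\|b-b_{2^{j+1}B}\|_{\exp L,2^{j+1}B}\le C(j+1)\|b\|_*$, and summing the resulting series in $j$. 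The main obstacle along that alternative path is the bookkeeping of the interplay between the growth of $\theta$, the $L\log L$ normalization, and these $BMO$ factors in the global term; since all of it is already carried out in the proof of Theorem~\ref{mainthm:5}, the specialization argument above is the cleaner option.
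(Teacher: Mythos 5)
Your proposal is correct and matches the paper's intended argument: the paper derives this corollary precisely by observing that a constant weight lies in $A_1$ and then specializing Theorem~\ref{mainthm:5}, under which $w(B)=|B|$ and every weighted quantity collapses to its unweighted counterpart. The additional sketch of a direct proof is accurate but unnecessary, since the paper offers no separate proof beyond the specialization.
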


Let $\Theta=\Theta(r)$, $r>0$, be a growth function with doubling constant $D(\Theta):1\le D(\Theta)<2^n$. If for any fixed $x_0\in\mathbb R^n$, we set $\theta(|B(x_0,r)|)=\Theta(r)$, then
\begin{equation*}
\theta(2^n|B(x_0,r)|)=\theta(|B(x_0,2r)|)=\Theta(2r).
\end{equation*}
For the doubling constant $D(\Theta)$ satisfying $1\le D(\Theta)<2^n$, which means that $D(\Theta)=2^{\kappa\cdot n}$ for some $0\leq\kappa<1$, then we are able to verify that $\theta$ is an increasing function and satisfies the $\mathcal D_\kappa$ condition $(\ref{D condition})$ with some $0\leq\kappa<1$. Thus, by the above unweighted results(Corollaries 2.10 through 2.13), we can also obtain strong type estimates and endpoint estimates of vector-valued intrinsic square functions and commutators in the generalized Morrey spaces $L^{p,\Theta}(\mathbb R^n)$ when $1\leq p<\infty$ and $\Theta$ satisfies the doubling condition $(\ref{doubling})$.

\begin{corollary}
Let $0<\alpha\le1$ and $1<p<\infty$. Suppose that $\Theta$ satisfies the doubling condition $(\ref{doubling})$ and $1\le D(\Theta)<2^n$, then there is a constant $C>0$ independent of $\vec{f}=(f_1,f_2,\ldots)$ such that
\begin{equation*}
\bigg\|\bigg(\sum_{j}\big|\mathcal S_\alpha(f_j)\big|^2\bigg)^{1/2}\bigg\|_{L^{p,\Theta}}
\leq C\bigg\|\bigg(\sum_{j}\big|f_j\big|^2\bigg)^{1/2}\bigg\|_{L^{p,\Theta}}.
\end{equation*}
\end{corollary}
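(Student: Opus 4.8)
The plan is to derive this from the unweighted Morrey type space estimate of Corollary 2.10 by realizing the generalized Morrey space $L^{p,\Theta}(\mathbb R^n)$ as a space $\mathcal M^{p,\theta}(\mathbb R^n)$ for a suitably chosen auxiliary function $\theta$.

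First I would fix the correspondence between $\Theta$ and $\theta$. Since every ball in $\mathbb R^n$ has Lebesgue measure $|B(x_0,r)|=v_n r^n$ with $v_n:=|B(0,1)|$ independent of the center $x_0$, the relation $\theta(|B(x_0,r)|)=\Theta(r)$ unambiguously defines a function $\theta(t):=\Theta\big((t/v_n)^{1/n}\big)$ on $(0,+\infty)$. As $\Theta$ is positive and increasing and $t\mapsto (t/v_n)^{1/n}$ is increasing, $\theta$ is positive and increasing as well.

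Next I would check that $\theta$ satisfies the $\mathcal D_\kappa$ condition $(\ref{D condition})$ with the value $\kappa\in[0,1)$ determined by $D(\Theta)=2^{\kappa n}$, which is possible precisely because $1\le D(\Theta)<2^n$. Given $0<r'<r$, one can choose $j\ge1$ with $2^{j-1}r'\le r<2^j r'$; iterating the doubling condition $(\ref{doubling})$ gives $\Theta(r)\le\Theta(2^j r')\le D(\Theta)^j\,\Theta(r')=2^{\kappa n j}\,\Theta(r')$, while $2^{j-1}\le r/r'$ yields $2^{\kappa n j}\le 2^{\kappa n}(r/r')^{\kappa n}$. Writing $t=v_n r^n$, $t'=v_n (r')^n$, so that $(r/r')^{\kappa n}=(t/t')^{\kappa}$, this becomes
\[
\frac{\theta(t)}{t^{\kappa}}\;\le\;2^{\kappa n}\,\frac{\theta(t')}{(t')^{\kappa}},
\]
which is exactly $(\ref{D condition})$ with constant $C=2^{\kappa n}=D(\Theta)$, and with an exponent $\kappa$ strictly below $1$.

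Finally, taking $w$ to be a constant function in the definition of $\mathcal M^{p,\theta}$ and using $\theta(|B(x_0,r)|)=\Theta(r)$, one has $\|f\|_{\mathcal M^{p,\theta}}=\sup_{B(x_0,r)}\big(\tfrac{1}{\Theta(r)}\int_{B(x_0,r)}|f(x)|^p\,dx\big)^{1/p}=\|f\|_{L^{p,\Theta}}$, so the two spaces coincide with equal norms. Applying Corollary 2.10 to the sequence $\vec f=(f_1,f_2,\ldots)$ in $\mathcal M^{p,\theta}(\mathbb R^n)$ then gives precisely the asserted bound in $L^{p,\Theta}(\mathbb R^n)$. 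I do not expect any genuine obstacle here: the only step needing a little care is the translation of the doubling condition on $\Theta$ (a statement about radii) into the $\mathcal D_\kappa$ condition on $\theta$ (a statement about Lebesgue measures of balls), which is handled by the elementary dyadic iteration above, and it is exactly the hypothesis $D(\Theta)<2^n$ that makes $\kappa<1$ available — the hypothesis required to invoke Corollary 2.10.
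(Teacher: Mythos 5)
Your proposal is correct and follows essentially the same route as the paper: set $\theta(|B(x_0,r)|)=\Theta(r)$, determine $\kappa\in[0,1)$ from $D(\Theta)=2^{\kappa n}$, check that $\theta$ is increasing and satisfies the $\mathcal D_\kappa$ condition $(\ref{D condition})$, and then apply the unweighted Corollary 2.10 with $w$ a constant. The only difference is that you write out the dyadic iteration verifying the $\mathcal D_\kappa$ condition, a step the paper states "we are able to verify" without giving the computation.
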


\begin{corollary}
Let $0<\alpha\le1$ and $p=1$. Suppose that $\Theta$ satisfies the doubling condition $(\ref{doubling})$ and $1\le D(\Theta)<2^n$, then there is a constant $C>0$ independent of $\vec{f}=(f_1,f_2,\ldots)$ such that
\begin{equation*}
\bigg\|\bigg(\sum_{j}\big|\mathcal S_\alpha(f_j)\big|^2\bigg)^{1/2}\bigg\|_{WL^{1,\Theta}}
\leq C\bigg\|\bigg(\sum_{j}\big|f_j\big|^2\bigg)^{1/2}\bigg\|_{L^{1,\Theta}}.
\end{equation*}
\end{corollary}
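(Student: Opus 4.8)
The plan is to obtain Corollary~2.15 as a direct specialization of the unweighted weak-type bound in $\mathcal M^{1,\theta}(\mathbb R^n)$ (Corollary~2.11), once $L^{1,\Theta}(\mathbb R^n)$ is identified with an $\mathcal M^{1,\theta}(\mathbb R^n)$ for a suitable choice of $\theta$. Writing $v_n=|B(0,1)|$, I would set $\theta(s):=\Theta\big((s/v_n)^{1/n}\big)$ for $s>0$, so that $\theta(|B(x_0,r)|)=\Theta(r)$ for every ball $B(x_0,r)$. Since $\Theta$ is positive and increasing on $(0,+\infty)$ and $s\mapsto(s/v_n)^{1/n}$ is increasing, $\theta$ is a positive increasing function on $(0,+\infty)$.

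The only point needing verification is that this $\theta$ satisfies the $\mathcal D_\kappa$ condition $(\ref{D condition})$ for the exponent $\kappa\in[0,1)$ determined by $D(\Theta)=2^{\kappa n}$; such a $\kappa$ exists precisely because $1\le D(\Theta)<2^n$. Since $\theta(\xi)/\xi^{\kappa}$ equals, up to the constant factor $v_n^{-\kappa}$, the quantity $\Theta(r)\,r^{-\kappa n}$ when $\xi=v_n r^n$, this amounts to showing $\Theta(r)\,r^{-\kappa n}\le C\,\Theta(r')\,(r')^{-\kappa n}$ whenever $0<r'<r$. I would argue dyadically: choose $k\in\mathbb Z_+$ with $2^{k-1}<r/r'\le 2^k$, iterate the doubling condition $(\ref{doubling})$ $k$ times, and use monotonicity of $\Theta$ to obtain
\[
\Theta(r)\le\Theta\big(2^k r'\big)\le D(\Theta)^{k}\,\Theta(r')=2^{k\kappa n}\,\Theta(r').
\]
Since $2^{k}\le 2\,(r/r')$, this gives $\Theta(r)\le 2^{\kappa n}(r/r')^{\kappa n}\Theta(r')$, hence $\Theta(r)\,r^{-\kappa n}\le 2^{\kappa n}\,\Theta(r')\,(r')^{-\kappa n}$, i.e. $(\ref{D condition})$ holds with $C=2^{\kappa n}$. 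This is exactly the verification announced in the paragraph introducing the generalized Morrey spaces above.

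Finally, substituting $\theta(|B(x_0,r)|)=\Theta(r)$ into the respective definitions immediately gives $\|f\|_{\mathcal M^{1,\theta}}=\|f\|_{L^{1,\Theta}}$ and $\|f\|_{W\mathcal M^{1,\theta}}=\|f\|_{WL^{1,\Theta}}$ for every measurable $f$; applying Corollary~2.11 to $\vec f=(f_1,f_2,\ldots)$ with this $\theta$ then yields the asserted inequality. I do not anticipate any genuine obstacle here: all of the analytic work — the splitting of an arbitrary ball into a local part and a tail, the application of Theorem~B on the local part, and the summation of the tail estimate controlled by the $\mathcal D_\kappa$ condition — is already carried out in the proof of Theorem~\ref{mainthm:2} and its unweighted specialization (Corollary~2.11), so the present statement reduces to the change of variable $\theta\leftrightarrow\Theta$ together with the elementary doubling estimate above.
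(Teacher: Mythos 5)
Your proposal is correct and follows exactly the route the paper takes: set $\theta(|B(x_0,r)|)=\Theta(r)$, observe that $D(\Theta)=2^{\kappa n}$ with $0\le\kappa<1$ forces $\theta$ to be increasing and to satisfy the $\mathcal D_\kappa$ condition, identify $\mathcal M^{1,\theta}(\mathbb R^n)=L^{1,\Theta}(\mathbb R^n)$ and $W\mathcal M^{1,\theta}(\mathbb R^n)=WL^{1,\Theta}(\mathbb R^n)$, and invoke Corollary~2.11. Your dyadic verification of the $\mathcal D_\kappa$ condition (choosing $k$ with $2^{k-1}<r/r'\le 2^k$ and iterating the doubling bound) spells out a step the paper merely asserts, but the argument is the same.
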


\begin{corollary}
Let $0<\alpha\le1$, $1<p<\infty$ and $b\in BMO(\mathbb R^n)$. Suppose that $\Theta$ satisfies the doubling condition $(\ref{doubling})$ and $1\le D(\Theta)<2^n$, then there is a constant $C>0$ independent of $\vec{f}=(f_1,f_2,\ldots)$ such that
\begin{equation*}
\bigg\|\bigg(\sum_{j}\big|\big[b,\mathcal S_\alpha\big](f_j)\big|^2\bigg)^{1/2}\bigg\|_{L^{p,\Theta}}
\leq C\bigg\|\bigg(\sum_{j}\big|f_j\big|^2\bigg)^{1/2}\bigg\|_{L^{p,\Theta}}.
\end{equation*}
\end{corollary}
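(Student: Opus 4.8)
The plan is to obtain this corollary as an immediate consequence of the unweighted Morrey type estimate for vector-valued commutators, Corollary 2.12 (the unweighted specialization of Theorem \ref{mainthm:3} to the space $\mathcal M^{p,\theta}(\mathbb R^n)$), by realizing $L^{p,\Theta}(\mathbb R^n)$ as one of the spaces $\mathcal M^{p,\theta}(\mathbb R^n)$. First I would fix $\Theta$ satisfying the doubling condition \eqref{doubling} with $1\le D(\Theta)<2^n$, set $v_n=|B(0,1)|$ so that $|B(x_0,r)|=v_nr^n$ for every $x_0\in\mathbb R^n$, and define $\theta$ on $(0,+\infty)$ by $\theta(t):=\Theta\big((t/v_n)^{1/n}\big)$. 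Since $t\mapsto(t/v_n)^{1/n}$ is increasing and $\Theta$ is a positive increasing function, $\theta$ is positive and increasing, and $\theta(|B(x_0,r)|)=\Theta(r)$ for all $x_0$ and all $r>0$. Consequently, for every ball $B=B(x_0,r)$ the quantities $\frac1{\theta(|B|)}\int_B|f|^p$ and $\frac1{\Theta(r)}\int_B|f|^p$ coincide, so $\|f\|_{\mathcal M^{p,\theta}}=\|f\|_{L^{p,\Theta}}$ and the two spaces are literally equal.

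The only genuine verification is that $\theta$ satisfies the $\mathcal D_\kappa$ condition \eqref{D condition} for some $\kappa\in[0,1)$. Since $1\le D(\Theta)<2^n$, set $\kappa:=\tfrac1n\log_2 D(\Theta)\in[0,1)$, so that $D(\Theta)=2^{\kappa n}$. Given $0<\xi'<\xi<+\infty$, write $\xi=v_nr^n$ and $\xi'=v_n(r')^n$ with $0<r'<r$, and choose the least $j\in\mathbb Z_+$ with $r<2^jr'$, so that also $2^{j-1}r'\le r$. Iterating \eqref{doubling} gives $\Theta(r)\le\Theta(2^jr')\le D(\Theta)^j\Theta(r')=2^{jn\kappa}\Theta(r')$, while $r>2^{j-1}r'$ gives $r^{n\kappa}>2^{(j-1)n\kappa}(r')^{n\kappa}$. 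Combining these,
\[
\frac{\theta(\xi)}{\xi^{\kappa}}=\frac{\Theta(r)}{v_n^{\kappa}\,r^{n\kappa}}
\le\frac{2^{jn\kappa}\Theta(r')}{v_n^{\kappa}\,2^{(j-1)n\kappa}(r')^{n\kappa}}
=2^{n\kappa}\cdot\frac{\Theta(r')}{v_n^{\kappa}(r')^{n\kappa}}
=2^{n\kappa}\cdot\frac{\theta(\xi')}{(\xi')^{\kappa}},
\]
which is precisely \eqref{D condition} with constant $C=2^{n\kappa}$. With this $\theta$, Corollary 2.12 applied to $[b,\mathcal S_\alpha]$ yields the claimed bound in $\mathcal M^{p,\theta}$-norm, and rewriting both sides in terms of the equal $L^{p,\Theta}$-norm finishes the proof.

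The main — and essentially only — obstacle is the step just carried out: the hypothesis $D(\Theta)<2^n$ is exactly what forces $\kappa<1$, which is what places $\theta$ inside the class of growth functions covered by Corollary 2.12; had one only assumed $D(\Theta)\le2^n$, the argument would produce $\kappa=1$, which lies outside the admissible range of the $\mathcal D_\kappa$ condition. No estimates for the operator $[b,\mathcal S_\alpha]$ itself are needed here, since all the analytic content (the $A_p$ and $BMO$ machinery, the pointwise and $L^p$ bounds) has already been absorbed into Theorem \ref{mainthm:3} and its unweighted specialization.
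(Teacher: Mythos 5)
Your proposal is correct and follows essentially the same path as the paper: the paper itself (in the paragraph between Corollaries~2.13 and~2.14) defines $\theta$ implicitly by $\theta(|B(x_0,r)|)=\Theta(r)$, observes that $1\le D(\Theta)<2^n$ forces $D(\Theta)=2^{\kappa n}$ with $0\le\kappa<1$ and that this makes $\theta$ satisfy the $\mathcal D_\kappa$ condition, and then invokes Corollary~2.12. Your write-up is merely more explicit (giving $\theta(t)=\Theta((t/v_n)^{1/n})$ and carrying out the iteration of the doubling inequality to produce the constant $2^{n\kappa}$, where the paper only asserts the verification can be done); the only blemish is a cosmetic slip where you write $r>2^{j-1}r'$ in place of the $r\ge 2^{j-1}r'$ you had just derived, which does not affect the estimate.
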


\begin{corollary}
Let $0<\alpha\leq1$, $p=1$ and $b\in BMO(\mathbb R^n)$. Suppose that $\Theta$ satisfies the doubling condition $(\ref{doubling})$ and $1\le D(\Theta)<2^n$, then for any given $\sigma>0$ and any ball $B(x_0,r)$ in $\mathbb R^n$, there exists a constant $C>0$ independent of $\vec{f}=(f_1,f_2,\ldots)$, $B(x_0,r)$ and $\sigma>0$ such that
\begin{equation*}
\begin{split}
&\frac{1}{\Theta(r)}\cdot\bigg|\bigg\{x\in B(x_0,r):\bigg(\sum_{j}\big|\big[b,\mathcal S_{\alpha}\big](f_j)(x)\big|^2\bigg)^{1/2}>\sigma\bigg\}\bigg|\\
\leq& C\cdot\sup_{r>0}\left\{\frac{\Phi\Big(\frac{|B(x_0,r)|}{\Theta(r)}\Big)}{|B(x_0,r)|}
\int_{B(x_0,r)}\Phi\bigg(\frac{\|\vec{f}(x)\|_{\ell^2}}{\sigma}\bigg)dx\right\},
\end{split}
\end{equation*}
where $\Phi(t)=t\cdot(1+\log^+t)$ and $\big\|\vec{f}(x)\big\|_{\ell^2}=\Big(\sum_{j}|f_j(x)|^2\Big)^{1/2}$.
\end{corollary}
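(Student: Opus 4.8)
The plan is to derive this corollary from Theorem~\ref{mainthm:5} by two successive specializations, exactly as outlined in the paragraph preceding Corollary 2.14. First I would put $w\equiv 1$ (or any constant) in Theorem~\ref{mainthm:5}: a constant weight lies in $A_1$, and every occurrence of the constant cancels in the quotients $w(B)/\theta(w(B))$ and $\frac1{w(B)}\int_B(\cdots)\,w$, so the conclusion degenerates to the unweighted endpoint inequality
\[
\frac{1}{\theta(|B|)}\bigg|\bigg\{x\in B:\bigg(\sum_{j}\big|\big[b,\mathcal S_{\alpha}\big](f_j)(x)\big|^2\bigg)^{1/2}>\sigma\bigg\}\bigg|\le C\sup_{B}\left\{\frac{\Phi\big(|B|/\theta(|B|)\big)}{|B|}\int_{B}\Phi\bigg(\frac{\|\vec{f}(x)\|_{\ell^2}}{\sigma}\bigg)\,dx\right\},
\]
valid for every $\theta$ obeying the $\mathcal D_\kappa$ condition $(\ref{D condition})$ with some $0\le\kappa<1$; this is Corollary 2.13.

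The second step is to feed the growth function $\Theta$ into this inequality. Fix $x_0\in\mathbb R^n$ and define $\theta$ on $(0,+\infty)$ by $\theta\big(|B(x_0,r)|\big)=\Theta(r)$; since $r\mapsto|B(x_0,r)|$ is a strictly increasing bijection of $(0,+\infty)$ onto itself — it equals a fixed dimensional constant times $r^n$, independently of $x_0$ — this unambiguously defines $\theta$, and $\theta$ is increasing because $\Theta$ is. To verify $(\ref{D condition})$, write $D(\Theta)=2^{\kappa n}$, so that $\kappa=\log_2 D(\Theta)/n$ and the hypothesis $1\le D(\Theta)<2^n$ forces $0\le\kappa<1$. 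The doubling bound $\Theta(2r)\le D(\Theta)\Theta(r)$ rearranges to $\Theta(2r)/(2r)^{\kappa n}\le\Theta(r)/r^{\kappa n}$, so $r\mapsto\Theta(r)/r^{\kappa n}$ decreases along powers of two. Given $0<r'<r$, choose the least integer $k\ge1$ with $r\le 2^kr'$; by monotonicity and $k$ applications of the doubling bound, $\Theta(r)\le D(\Theta)^k\Theta(r')$, and since minimality of $k$ gives $2^{k-1}<r/r'$ we obtain $D(\Theta)^k=(2^k)^{\kappa n}<D(\Theta)(r/r')^{\kappa n}$, hence $\Theta(r)/r^{\kappa n}\le D(\Theta)\,\Theta(r')/(r')^{\kappa n}$. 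Writing $\xi=|B(x_0,r)|$, $\xi'=|B(x_0,r')|$ so that $\xi/\xi'=(r/r')^n$, the dimensional constant drops out and this reads $\theta(\xi)/\xi^\kappa\le C\,\theta(\xi')/(\xi')^\kappa$ with $C=D(\Theta)$; thus $\theta$ satisfies $(\ref{D condition})$.

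It then remains only to apply Corollary 2.13 with this $\theta$ and translate notation: for a ball $B=B(x_0,r)$ one has $\theta(|B|)=\Theta(r)$, so the left-hand side becomes $\Theta(r)^{-1}\,|\{x\in B(x_0,r):\cdots>\sigma\}|$, the pre-factor $\Phi\big(|B|/\theta(|B|)\big)/|B|$ equals $\Phi\big(|B(x_0,r)|/\Theta(r)\big)/|B(x_0,r)|$ — which depends only on the radius — and the supremum over all balls on the right is the supremum over $r>0$ displayed in the statement. This yields exactly the asserted bound. I do not expect any genuine obstacle here: apart from invoking Theorem~\ref{mainthm:5}, the only point requiring a moment's care is checking that the passage from the doubling condition $(\ref{doubling})$ on $\Theta$ to the $\mathcal D_\kappa$ condition $(\ref{D condition})$ on $\theta$ works precisely when $1\le D(\Theta)<2^n$, which is exactly why that hypothesis is imposed.
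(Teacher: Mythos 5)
Your proposal is correct and follows exactly the route the paper sketches in the paragraph preceding Corollaries 2.14--2.17: take $w\equiv1$ in Theorem~\ref{mainthm:5} to obtain the unweighted Corollary 2.13, then set $\theta(|B(x_0,r)|)=\Theta(r)$ and use $1\le D(\Theta)=2^{\kappa n}<2^n$ to verify the $\mathcal D_\kappa$ condition. Your verification of $\mathcal D_\kappa$ via the dyadic iteration of the doubling bound is a correct filling-in of a step the paper leaves implicit.
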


Throughout this paper, the letter $C$ always denotes a positive constant independent of the main parameters involved, but it may be different from line to line.

\section{Proofs of Theorems \ref{mainthm:1} and \ref{mainthm:2}}

\begin{proof}[Proof of Theorem $\ref{mainthm:1}$]

Let $\big(\sum_{j}|f_j|^2\big)^{1/2}\in\mathcal M^{p,\theta}(w)$ with $1<p<\infty$ and $w\in A_p$. For arbitrary $x_0\in\mathbb R^n$, set $B=B(x_0,r_B)$ for the ball centered at $x_0$ and of radius $r_B$. We represent $f_j$ as
\begin{equation*}
f_j=f_j\cdot\chi_{2B}+f_j\cdot\chi_{(2B)^c}:=f^0_j+f^\infty_j,
\end{equation*}
where $\chi_{2B}$ denotes the characteristic function of $2B=B(x_0,2r_B)\subseteq\mathbb R^n$, $j=1,2,\ldots$. Then we write
\begin{equation*}
\begin{split}
&\frac{1}{\theta(w(B))^{1/p}}\Bigg(\int_B\bigg(\sum_{j}\big|\mathcal S_\alpha(f_j)(x)\big|^2\bigg)^{p/2}w(x)\,dx\Bigg)^{1/p}\\
\leq\ &\frac{1}{\theta(w(B))^{1/p}}\Bigg(\int_B\bigg(\sum_{j}\big|\mathcal S_\alpha(f^0_j)(x)\big|^2\bigg)^{p/2}w(x)\,dx\Bigg)^{1/p}\\
&+\frac{1}{\theta(w(B))^{1/p}}\Bigg(\int_B\bigg(\sum_{j}\big|\mathcal S_\alpha(f^\infty_j)(x)\big|^2\bigg)^{p/2}w(x)\,dx\Bigg)^{1/p}\\
:=\ &I_1+I_2.
\end{split}
\end{equation*}
Let us first estimate $I_1$. From the boundedness of vector-valued intrinsic square functions in $L^p_w(\mathbb R^n)$ (see Theorem A), it follows that
\begin{equation*}
\begin{split}
I_1&\leq \frac{1}{\theta(w(B))^{1/p}}
\bigg\|\bigg(\sum_{j}\big|\mathcal S_\alpha(f^0_j)\big|^2\bigg)^{1/2}\bigg\|_{L^p_w}\\
&\leq C\cdot\frac{1}{\theta(w(B))^{1/p}}
\Bigg(\int_{2B}\bigg(\sum_{j}\big|f_j(x)\big|^2\bigg)^{p/2}w(x)\,dx\Bigg)^{1/p}\\
&\leq C\bigg\|\bigg(\sum_{j}\big|f_j\big|^2\bigg)^{1/2}\bigg\|_{\mathcal M^{p,\theta}(w)}
\cdot\frac{\theta(w(2B))^{1/p}}{\theta(w(B))^{1/p}}.
\end{split}
\end{equation*}
Moreover, since $0<w(B)<w(2B)<+\infty$ when $w\in A_p$ with $1<p<\infty$, then by the $\mathcal D_\kappa$ condition (\ref{D condition}) of $\theta$ and the inequality \eqref{weights}, we obtain
\begin{equation*}
\begin{split}
I_1&\leq  C\bigg\|\bigg(\sum_{j}\big|f_j\big|^2\bigg)^{1/2}\bigg\|_{\mathcal M^{p,\theta}(w)}\cdot\frac{w(2B)^{\kappa/p}}{w(B)^{\kappa/p}}\\
&\leq C\bigg\|\bigg(\sum_{j}\big|f_j\big|^2\bigg)^{1/2}\bigg\|_{\mathcal M^{p,\theta}(w)}.
\end{split}
\end{equation*}
We now consider the other term $I_2$. For any $\varphi\in{\mathcal C}_\alpha$, $0<\alpha\le1$, $j=1,2,\ldots$, and $(y,t)\in\Gamma(x)$ with $x\in B$, we have
\begin{align}\label{Key1}
\bigg|\int_{\mathbb R^n}\varphi_t(y-z)f^\infty_j(z)\,dz\bigg|
&=\bigg|\int_{(2B)^c}\varphi_t(y-z)f_j(z)\,dz\bigg|\notag\\
&\leq C\cdot t^{-n}\int_{(2B)^c\cap\{z:|y-z|\le t\}}\big|f_j(z)\big|\,dz\notag\\
&\leq C\cdot t^{-n}\sum_{l=1}^\infty\int_{(2^{l+1}B\backslash 2^{l}B)\cap\{z:|y-z|\le t\}}\big|f_j(z)\big|\,dz.
\end{align}
Since $|y-z|\le t$ and $(y,t)\in\Gamma(x)$, then one has $|x-z|\leq|x-y|+|y-z|\leq 2t$. Hence, for any $x\in B$ and $z\in\big(2^{l+1}B\backslash 2^{l}B\big)$, a direct computation shows that
\begin{equation}\label{Key2}
2t\geq |x-z|\geq |z-x_0|-|x-x_0|\geq 2^{l-1}r_B.
\end{equation}
Therefore, by using the above inequalities \eqref{Key1} and \eqref{Key2} together with Minkowski's inequality for integrals, we can deduce
\begin{equation*}
\begin{split}
\mathcal S_\alpha(f^\infty_j)(x)&=\left(\iint_{\Gamma(x)}\sup_{\varphi\in{\mathcal C}_\alpha}\bigg|\int_{\mathbb R^n} \varphi_t(y-z)f^\infty_j(z)\,dz\bigg|^2\frac{dydt}{t^{n+1}}\right)^{1/2}\\
&\leq C\left(\int_{2^{l-2}r_B}^\infty\int_{|x-y|<t}\bigg|t^{-n}\sum_{l=1}^\infty\int_{2^{l+1}B\backslash 2^{l}B} \big|f_j(z)\big|\,dz\bigg|^2\frac{dydt}{t^{n+1}}\right)^{1/2}\\
&\le C\left(\sum_{l=1}^\infty\int_{2^{l+1}B\backslash 2^{l}B}\big|f_j(z)\big|\,dz\right)
\left(\int_{2^{l-2}r_B}^\infty\frac{dt}{t^{2n+1}}\right)^{1/2}\\
&\leq C\sum_{l=1}^\infty\frac{1}{|2^{l+1}B|}\int_{2^{l+1}B\backslash 2^{l}B}\big|f_j(z)\big|\,dz.
\end{split}
\end{equation*}
Then by duality and Cauchy--Schwarz inequality, we get
\begin{align}\label{key estimate1}
&\bigg(\sum_{j}\big|\mathcal S_\alpha(f^\infty_j)(x)\big|^2\bigg)^{1/2}\notag\\
&\leq C\Bigg(\sum_{j}\bigg|\sum_{l=1}^\infty\frac{1}{|2^{l+1}B|}
\int_{2^{l+1}B\backslash 2^{l}B}\big|f_j(z)\big|\,dz\bigg|^2\Bigg)^{1/2}\notag\\
&\leq C\sup_{(\sum_j|\zeta_j|^2)^{1/2}\leq1}\sum_{j}\bigg(\sum_{l=1}^\infty\frac{1}{|2^{l+1}B|}
\int_{2^{l+1}B}\big|f_j(z)\big|\,dz\cdot\zeta_j\bigg)\notag\\
&\leq C\sum_{l=1}^\infty\frac{1}{|2^{l+1}B|}\int_{2^{l+1}B}\sup_{(\sum_j|\zeta_j|^2)^{1/2}\leq1}
\bigg(\sum_{j}\big|f_j(z)\big|\cdot\zeta_j\bigg)dz\notag\\
&\leq C\sum_{l=1}^\infty\frac{1}{|2^{l+1}B|}\int_{2^{l+1}B}\bigg(\sum_{j}\big|f_j(z)\big|^2\bigg)^{1/2}dz.
\end{align}
Furthermore, it follows from H\"older's inequality and $A_p$ condition on $w$ that
\begin{equation*}
\begin{split}
&\bigg(\sum_{j}\big|\mathcal S_\alpha(f^\infty_j)(x)\big|^2\bigg)^{1/2}\\
&\leq C\sum_{l=1}^\infty\frac{1}{|2^{l+1}B|}\Bigg(\int_{2^{l+1}B}
\bigg(\sum_{j}\big|f_j(z)\big|^2\bigg)^{p/2}w(z)\,dz\Bigg)^{1/p}
\times\left(\int_{2^{l+1}B}w(z)^{-{p'}/p}\,dz\right)^{1/{p'}}\\
&\leq C\bigg\|\bigg(\sum_{j}\big|f_j\big|^2\bigg)^{1/2}\bigg\|_{\mathcal M^{p,\theta}(w)}
\times\sum_{l=1}^\infty\frac{\theta(w(2^{l+1}B))^{1/p}}{w(2^{l+1}B)^{1/p}}.
\end{split}
\end{equation*}
Hence, by the above pointwise estimate,
\begin{equation*}
\begin{split}
I_2&\leq C\bigg\|\bigg(\sum_{j}\big|f_j\big|^2\bigg)^{1/2}\bigg\|_{\mathcal M^{p,\theta}(w)}
\times\sum_{l=1}^\infty\frac{\theta(w(2^{l+1}B))^{1/p}}{\theta(w(B))^{1/p}}\cdot\frac{w(B)^{1/p}}{w(2^{l+1}B)^{1/p}}.
\end{split}
\end{equation*}
For any $l\in\mathbb Z^+$, since $0<w(B)<w(2^{l+1}B)<+\infty$ when $w\in A_p\subset A_\infty$ with $1<p<\infty$, then by using the $\mathcal D_\kappa$ condition (\ref{D condition}) of $\theta$ again, the inequality (\ref{compare}) with exponent $\delta>0$ and the fact that $0\leq\kappa<1$, we find that
\begin{align}\label{theta1}
\sum_{l=1}^\infty\frac{\theta(w(2^{l+1}B))^{1/p}}{\theta(w(B))^{1/p}}\cdot\frac{w(B)^{1/p}}{w(2^{l+1}B)^{1/p}}
&\leq C\sum_{l=1}^\infty\frac{w(B)^{{(1-\kappa)}/p}}{w(2^{l+1}B)^{{(1-\kappa)}/p}}\notag\\
&\leq C\sum_{l=1}^\infty\left(\frac{|B|}{|2^{l+1}B|}\right)^{\delta {(1-\kappa)}/p}\notag\\
&\leq C\sum_{l=1}^\infty\left(\frac{1}{2^{(l+1)n}}\right)^{\delta {(1-\kappa)}/p}\notag\\
&\leq C,
\end{align}
where the last series is convergent since the exponent $\delta {(1-\kappa)}/p$ is positive. This implies our desired estimate
\begin{equation*}
I_2\leq C\bigg\|\bigg(\sum_{j}\big|f_j\big|^2\bigg)^{1/2}\bigg\|_{\mathcal M^{p,\theta}(w)}.
\end{equation*}
Combining the above two estimates for $I_1$ and $I_2$, and then taking the supremum over all balls $B\subset\mathbb R^n$, we complete the proof of Theorem \ref{mainthm:1}.
\end{proof}

\begin{proof}[Proof of Theorem $\ref{mainthm:2}$]
Let $\big(\sum_{j}|f_j|^2\big)^{1/2}\in\mathcal M^{1,\theta}(w)$ with $w\in A_1$. For arbitrary $x_0\in\mathbb R^n$, set $B=B(x_0,r_B)$ for the ball centered at $x_0$ and of radius $r_B$. Write $f_j=f^0_j+f^\infty_j$ with $f^0_j=f_j\cdot\chi_{2B}$ and $f^\infty_j=f_j\cdot\chi_{(2B)^c}$, $j=1,2,\ldots$. Then for any given $\sigma>0$, we have
\begin{equation*}
\begin{split}
&\frac{1}{\theta(w(B))}\sigma\cdot w\bigg(\bigg\{x\in B:\bigg(\sum_{j}\big|\mathcal S_{\alpha}(f_j)(x)\big|^2\bigg)^{1/2}>\sigma\bigg\}\bigg)\\
\end{split}
\end{equation*}
\begin{equation*}
\begin{split}
&\leq\frac{1}{\theta(w(B))}\sigma\cdot w\bigg(\bigg\{x\in B:\bigg(\sum_{j}\big|\mathcal S_{\alpha}(f^0_j)(x)\big|^2\bigg)^{1/2}>\sigma/2\bigg\}\bigg)\\
&+ \frac{1}{\theta(w(B))}\sigma\cdot w\bigg(\bigg\{x\in B:\bigg(\sum_{j}\big|\mathcal S_{\alpha}(f^{\infty}_j)(x)\big|^2\bigg)^{1/2}>\sigma/2\bigg\}\bigg)\\
:=&I'_1+I'_2.
\end{split}
\end{equation*}
Below we will give the estimates of $I'_1$ and $I'_2$, respectively. From the weighted weak $(1,1)$ boundedness of vector-valued intrinsic square functions (see Theorem B), it follows that
\begin{equation*}
\begin{split}
I'_1&\leq \frac{2}{\theta(w(B))}
\bigg\|\bigg(\sum_{j}\big|\mathcal S_\alpha(f^0_j)\big|^2\bigg)^{1/2}\bigg\|_{WL^1_w}\\
&\leq C\cdot\frac{1}{\theta(w(B))}\Bigg(\int_{2B}\bigg(\sum_{j}\big|f_j(x)\big|^2\bigg)^{1/2}w(x)\,dx\Bigg)\\
&\leq C\bigg\|\bigg(\sum_{j}\big|f_j\big|^2\bigg)^{1/2}\bigg\|_{\mathcal M^{1,\theta}(w)}
\cdot\frac{\theta(w(2B))}{\theta(w(B))}.
\end{split}
\end{equation*}
Moreover, since $0<w(B)<w(2B)<+\infty$ when $w\in A_1$, then by the $\mathcal D_\kappa$ condition (\ref{D condition}) of $\theta$ and inequality (\ref{weights}), we get
\begin{equation*}
\begin{split}
I'_1&\leq C\bigg\|\bigg(\sum_{j}\big|f_j\big|^2\bigg)^{1/2}\bigg\|_{\mathcal M^{1,\theta}(w)}
\cdot\frac{w(2B)^{\kappa}}{w(B)^{\kappa}}\\
&\leq C\bigg\|\bigg(\sum_{j}\big|f_j\big|^2\bigg)^{1/2}\bigg\|_{\mathcal M^{1,\theta}(w)}.
\end{split}
\end{equation*}
As for the term $I'_2$, it follows directly from Chebyshev's inequality and the pointwise estimate \eqref{key estimate1} that
\begin{align}\label{Iprime}
I'_2&\leq\frac{1}{\theta(w(B))}\sigma\cdot\frac{\,2\,}{\sigma}
\int_B\bigg(\sum_{j}\big|\mathcal S_{\alpha}(f^{\infty}_j)(x)\big|^2\bigg)^{1/2}w(x)\,dx\notag\\
&\leq C\cdot\frac{w(B)}{\theta(w(B))}
\sum_{l=1}^\infty\frac{1}{|2^{l+1}B|}\int_{2^{l+1}B}\bigg(\sum_{j}\big|f_j(z)\big|^2\bigg)^{1/2}dz.
\end{align}
Another application of $A_1$ condition on $w$ gives that
\begin{equation*}
\begin{split}
&\frac{1}{|2^{l+1}B|}\int_{2^{l+1}B}\bigg(\sum_{j}\big|f_j(z)\big|^2\bigg)^{1/2}dz\\
&\leq C\frac{1}{w(2^{l+1}B)}\cdot\underset{z\in 2^{l+1}B}{\mbox{ess\,inf}}\;w(z)
\int_{2^{l+1}B}\bigg(\sum_{j}\big|f_j(z)\big|^2\bigg)^{1/2}dz\\
&\leq C\frac{1}{w(2^{l+1}B)}\bigg(\int_{2^{l+1}B}\bigg(\sum_{j}\big|f_j(z)\big|^2\bigg)^{1/2}w(z)\,dz\bigg)\\
\end{split}
\end{equation*}
\begin{equation}\label{Iprime2}
\begin{split}
&\leq C\bigg\|\bigg(\sum_{j}\big|f_j\big|^2\bigg)^{1/2}\bigg\|_{\mathcal M^{1,\theta}(w)}\cdot\frac{\theta(w(2^{l+1}B))}{w(2^{l+1}B)}.
\end{split}
\end{equation}
Substituting the above inequality \eqref{Iprime2} into \eqref{Iprime}, we thus obtain
\begin{equation*}
\begin{split}
I'_2&\leq C\bigg\|\bigg(\sum_{j}\big|f_j\big|^2\bigg)^{1/2}\bigg\|_{\mathcal M^{1,\theta}(w)}
\times\sum_{l=1}^\infty\frac{\theta(w(2^{l+1}B))}{\theta(w(B))}\cdot\frac{w(B)}{w(2^{l+1}B)}.
\end{split}
\end{equation*}
Note that $w\in A_1\subset A_\infty$, then one has $0<w(B)<w(2^{l+1}B)<+\infty$ for any $l\in\mathbb Z^+$. Thus, by using the $\mathcal D_\kappa$ condition (\ref{D condition}) of $\theta$ again, the inequality \eqref{compare} with exponent $\delta^*>0$ and the fact that $0\leq\kappa<1$, we find that
\begin{align}\label{theta2}
\sum_{l=1}^\infty\frac{\theta(w(2^{l+1}B))}{\theta(w(B))}\cdot\frac{w(B)}{w(2^{l+1}B)}
&\leq C\sum_{l=1}^\infty\frac{w(B)^{1-\kappa}}{w(2^{l+1}B)^{1-\kappa}}\notag\\
&\leq C\sum_{l=1}^\infty\left(\frac{|B|}{|2^{l+1}B|}\right)^{\delta^*(1-\kappa)}\notag\\
&\leq C\sum_{l=1}^\infty\left(\frac{1}{2^{(l+1)n}}\right)^{\delta^*(1-\kappa)}\notag\\
&\leq C.
\end{align}
Therefore,
\begin{equation*}
I'_2\leq C\bigg\|\bigg(\sum_{j}\big|f_j\big|^2\bigg)^{1/2}\bigg\|_{\mathcal M^{1,\theta}(w)}.
\end{equation*}
Summing up the above estimates for $I'_1$ and $I'_2$, and then taking the supremum over all balls $B\subset\mathbb R^n$ and all $\sigma>0$, we finish the proof of Theorem \ref{mainthm:2}.
\end{proof}

\section{Proof of Theorem \ref{mainthm:3}}

Given a real-valued function $b\in BMO(\mathbb R^n)$, we will follow the idea developed in \cite{alvarez,ding} and denote $F(\xi)=e^{\xi[b(x)-b(z)]}$, $\xi\in\mathbb C$. Then by the analyticity of $F(\xi)$ on $\mathbb C$ and the Cauchy integral formula, we get
\begin{equation*}
\begin{split}
b(x)-b(z)&=F'(0)=\frac{1}{2\pi i}\int_{|\xi|=1}\frac{F(\xi)}{\xi^2}\,d\xi\\
&=\frac{1}{2\pi}\int_0^{2\pi}e^{e^{i\theta}[b(x)-b(z)]}\cdot e^{-i\theta}d\theta.
\end{split}
\end{equation*}
Thus, for any $\varphi\in{\mathcal C}_\alpha$, $0<\alpha\le1$ and $j\in\mathbb Z^+$, we obtain
\begin{equation*}
\begin{split}
&\bigg|\int_{\mathbb R^n}\big[b(x)-b(z)\big]\varphi_t(y-z)f_j(z)\,dz\bigg|\\
=&\bigg|\frac{1}{2\pi}\int_0^{2\pi}\bigg(\int_{\mathbb R^n}\varphi_t(y-z)e^{-e^{i\theta}b(z)}f_j(z)\,dz\bigg)
e^{e^{i\theta}b(x)}\cdot e^{-i\theta}d\theta\bigg|\\
\end{split}
\end{equation*}
\begin{equation*}
\begin{split}
\leq&\frac{1}{2\pi}\int_0^{2\pi}\sup_{\varphi\in{\mathcal C}_\alpha}\bigg|\int_{\mathbb R^n}\varphi_t(y-z)e^{-e^{i\theta}b(z)}f_j(z)\,dz\bigg|
e^{\cos\theta\cdot b(x)}d\theta\\
\leq&\frac{1}{2\pi}\int_0^{2\pi}A_\alpha\big(e^{-e^{i\theta}b}\cdot f_j\big)(y,t)\cdot e^{\cos\theta\cdot b(x)}d\theta.
\end{split}
\end{equation*}
So we have
\begin{equation*}
\big|\big[b,\mathcal S_\alpha\big](f_j)(x)\big|\leq\frac{1}{2\pi}\int_0^{2\pi}
\mathcal S_\alpha\big(e^{-e^{i\theta}b}\cdot f_j\big)(x)\cdot e^{\cos\theta\cdot b(x)}d\theta.
\end{equation*}
Moreover, by using standard duality argument and Cauchy--Schwarz inequality, we get
\begin{equation*}
\begin{split}
&\bigg(\sum_{j}\big|\big[b,\mathcal S_\alpha\big](f_j)(x)\big|^2\bigg)^{1/2}\\
&\leq\frac{1}{2\pi}\Bigg(\sum_{j}\left|\int_0^{2\pi}
\mathcal S_\alpha\big(e^{-e^{i\theta}b}\cdot f_j\big)(x)\cdot e^{\cos\theta\cdot b(x)}d\theta\right|^2\Bigg)^{1/2}\\
&\leq\frac{1}{2\pi}\sup_{(\sum_j|\zeta_j|^2)^{1/2}\leq1}\sum_{j}\left(\int_0^{2\pi}
\mathcal S_\alpha\big(e^{-e^{i\theta}b}\cdot f_j\big)(x)\cdot e^{\cos\theta\cdot b(x)}d\theta\cdot\zeta_j\right)\\
&\leq\frac{1}{2\pi}\int_0^{2\pi}\sup_{(\sum_j|\zeta_j|^2)^{1/2}\leq1}
\Bigg(\sum_{j}\mathcal S_\alpha\big(e^{-e^{i\theta}b}\cdot f_j\big)(x)\cdot e^{\cos\theta\cdot b(x)}\cdot\zeta_j\Bigg)d\theta\\
&\leq\frac{1}{2\pi}\int_0^{2\pi}\bigg(\sum_{j}\Big|\mathcal S_\alpha\big(e^{-e^{i\theta}b}\cdot f_j\big)(x)\Big|^2\bigg)^{1/2}
\cdot e^{\cos\theta\cdot b(x)}d\theta.
\end{split}
\end{equation*}
Therefore, by the $L^p_w$-boundedness of vector-valued intrinsic square functions (see Theorem A), and using the same arguments as in \cite{ding}, we can also show the following result.
\begin{theorem}\label{commutator thm}
Let $0<\alpha\le1$, $1<p<\infty$ and $w\in A_p$. Then there exists a constant $C>0$ independent of $\vec{f}=(f_1,f_2,\ldots)$ such that
\begin{equation*}
\bigg\|\bigg(\sum_{j}\big|\big[b,\mathcal S_\alpha\big](f_j)\big|^2\bigg)^{1/2}\bigg\|_{L^p_w}
\leq C \bigg\|\bigg(\sum_{j}\big|f_j\big|^2\bigg)^{1/2}\bigg\|_{L^p_w}
\end{equation*}
provided that $b\in BMO(\mathbb R^n)$.
\end{theorem}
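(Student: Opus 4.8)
The plan is to deduce Theorem~\ref{commutator thm} directly from the pointwise estimate
\begin{equation*}
\bigg(\sum_{j}\big|\big[b,\mathcal S_\alpha\big](f_j)(x)\big|^2\bigg)^{1/2}\leq\frac{1}{2\pi}\int_0^{2\pi}\bigg(\sum_{j}\Big|\mathcal S_\alpha\big(e^{-e^{i\theta}b}\cdot f_j\big)(x)\Big|^2\bigg)^{1/2}\cdot e^{\cos\theta\cdot b(x)}\,d\theta
\end{equation*}
obtained above, combined with the weighted $L^p$ bound of Theorem~A applied to a one-parameter family of weights. First I would take the $L^p_w$ norm of both sides and, since $p\geq 1$, invoke Minkowski's integral inequality to bring the $L^p_w$ norm inside the integral over $\theta\in[0,2\pi]$, thereby reducing matters to a bound, uniform in $\theta$, for
\begin{equation*}
\bigg\|\bigg(\sum_{j}\big|\mathcal S_\alpha\big(e^{-e^{i\theta}b}\cdot f_j\big)\big|^2\bigg)^{1/2}\cdot e^{\cos\theta\cdot b}\bigg\|_{L^p_w}.
\end{equation*}

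For each fixed $\theta$, I would raise this to the $p$-th power and absorb the factor $e^{p\cos\theta\cdot b(x)}$ into the measure, writing the quantity as $\int_{\mathbb R^n}\big(\sum_j|\mathcal S_\alpha(e^{-e^{i\theta}b}f_j)(x)|^2\big)^{p/2}\,d\nu_\theta(x)$ with $d\nu_\theta=e^{p\cos\theta\cdot b}w\,dx$. If $\nu_\theta\in A_p$, then Theorem~A (in its quantitative form, where the constant depends only on $[\nu_\theta]_{A_p}$, $n$, $p$ and $\alpha$) bounds this by $C\int_{\mathbb R^n}\big(\sum_j|e^{-e^{i\theta}b(x)}f_j(x)|^2\big)^{p/2}\,d\nu_\theta(x)$. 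Since $|e^{-e^{i\theta}b(x)}|=e^{-\cos\theta\cdot b(x)}$, the exponential weights cancel exactly and this equals $C\int_{\mathbb R^n}\big(\sum_j|f_j(x)|^2\big)^{p/2}w(x)\,dx$, which is precisely the right-hand side we want. Integrating the resulting uniform bound over the compact interval $[0,2\pi]$ and dividing by $2\pi$ would finish the argument.

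The main obstacle is exactly the requirement that $\nu_\theta=e^{p\cos\theta\cdot b}w$ lie in $A_p$ with a constant bounded uniformly in $\theta$: since $\cos\theta$ ranges over $[-1,1]$ and $b$ is only in $BMO(\mathbb R^n)$, the function $e^{p\cos\theta\cdot b}w$ need not be an $A_p$ weight when $\|b\|_*$ is large and $|\cos\theta|$ is close to $1$. The standard remedy, following \cite{alvarez,ding}, is to run the Cauchy integral representation of $b(x)-b(z)=F'(0)$ on the circle $|\xi|=\varepsilon$ rather than $|\xi|=1$, which replaces $\cos\theta$ by $\varepsilon\cos\theta$ throughout and introduces only a harmless prefactor $\varepsilon^{-1}$; one then chooses $\varepsilon=\varepsilon(n,p,[w]_{A_p},\|b\|_*)>0$ small enough that $w\,e^{s\varepsilon\,b}\in A_p$ with uniformly bounded $A_p$ constant for every $|s|\leq p$. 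This uses the John--Nirenberg inequality together with the reverse H\"older / self-improvement property of $A_p$ weights, so that $w$ has a bit of room and $e^{s\varepsilon b}$ is a small perturbation. With this adjustment the cancellation described above goes through verbatim, and the final constant $C$ depends on $n$, $p$, $\alpha$, $[w]_{A_p}$ and $\|b\|_*$. The remaining ingredients — Minkowski's integral inequality, the change of measure, and the duality together with the Cauchy--Schwarz inequality already carried out above — are entirely routine.
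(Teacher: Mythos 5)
Your proof is correct and follows exactly the approach the paper sketches: the Cauchy-integral (conjugation) representation of $b(x)-b(z)$, the resulting pointwise bound, Minkowski's integral inequality, a change of measure to $d\nu_\theta=e^{p\cos\theta\cdot b}w\,dx$, Theorem~A in the conjugate weight, and exact cancellation of the exponentials --- the paper simply derives the pointwise bound and delegates the rest to ``the same arguments as in \cite{ding}.'' Worth noting: you correctly flag that the paper's literal Cauchy integral over $|\xi|=1$ is not enough to keep $\nu_\theta\in A_p$ uniformly (unless $\|b\|_*$ is small), and that integrating over a small circle $|\xi|=\varepsilon$ with $\varepsilon$ chosen via John--Nirenberg and the self-improvement of $A_p$ is what actually makes the cited argument go through; this is a genuine gap in the paper's own presentation that your write-up closes.
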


We are now in a position to give the proof of Theorem $\ref{mainthm:3}$.

\begin{proof}
Let $\big(\sum_{j}|f_j|^2\big)^{1/2}\in\mathcal M^{p,\theta}(w)$ with $1<p<\infty$ and $w\in A_p$. Fix $x_0\in\mathbb R^n$ and let $B=B(x_0,r_B)$ be a ball centered at $x_0$ of radius $r_B$. We split $f_j$ by $f_j=f^0_j+f^\infty_j$, where $f^0_j=f_j\cdot\chi_{2B}$ and $2B=B(x_0,2r_B)\subseteq\mathbb R^n$, $j=1,2,\ldots$. Then we write
\begin{equation*}
\begin{split}
&\frac{1}{\theta(w(B))^{1/p}}\Bigg(\int_B\bigg(\sum_{j}\big|\big[b,\mathcal S_\alpha\big](f_j)(x)\big|^2\bigg)^{p/2}w(x)\,dx\Bigg)^{1/p}\\
\end{split}
\end{equation*}
\begin{equation*}
\begin{split}
\leq\ &\frac{1}{\theta(w(B))^{1/p}}\Bigg(\int_B\bigg(\sum_{j}\big|\big[b,\mathcal S_\alpha\big](f^0_j)(x)\big|^2\bigg)^{p/2}w(x)\,dx\Bigg)^{1/p}\\
&+\frac{1}{\theta(w(B))^{1/p}}\Bigg(\int_B\bigg(\sum_{j}\big|\big[b,\mathcal S_\alpha\big](f^\infty_j)(x)\big|^2\bigg)^{p/2}w(x)\,dx\Bigg)^{1/p}\\
:=\ &J_1+J_2.
\end{split}
\end{equation*}
By using Theorem \ref{commutator thm}, the $\mathcal D_\kappa$ condition (\ref{D condition}) of $\theta$ and the inequality (\ref{weights}), we obtain
\begin{equation*}
\begin{split}
J_1&\leq \frac{1}{\theta(w(B))^{1/p}}
\bigg\|\bigg(\sum_{j}\big|\big[b,\mathcal S_\alpha\big](f^0_j)\big|^2\bigg)^{1/2}\bigg\|_{L^p_w}\\
&\leq C\cdot\frac{1}{\theta(w(B))^{1/p}}
\Bigg(\int_{2B}\bigg(\sum_{j}\big|f_j(x)\big|^2\bigg)^{p/2}w(x)\,dx\Bigg)^{1/p}\\
&\leq C\bigg\|\bigg(\sum_{j}\big|f_j\big|^2\bigg)^{1/2}\bigg\|_{\mathcal M^{p,\theta}(w)}
\cdot\frac{\theta(w(2B))^{1/p}}{\theta(w(B))^{1/p}}\\
&\leq C\bigg\|\bigg(\sum_{j}\big|f_j\big|^2\bigg)^{1/2}\bigg\|_{\mathcal M^{p,\theta}(w)}\cdot\frac{w(2B)^{\kappa/p}}{w(B)^{\kappa/p}}\\
&\leq C\bigg\|\bigg(\sum_{j}\big|f_j\big|^2\bigg)^{1/2}\bigg\|_{\mathcal M^{p,\theta}(w)}.
\end{split}
\end{equation*}
Let us now turn to estimate the other term $J_2$. For any given $x\in B$, $(y,t)\in\Gamma(x)$ and for $j=1,2,\dots$, we have
\begin{equation*}
\begin{split}
\sup_{\varphi\in{\mathcal C}_\alpha}\bigg|\int_{\mathbb R^n}\big[b(x)-b(z)\big]\varphi_t(y-z)f^\infty_j(z)\,dz\bigg|&\le
\big|b(x)-b_B\big|\cdot\sup_{\varphi\in{\mathcal C}_\alpha}\bigg|\int_{\mathbb R^n}\varphi_t(y-z)f^\infty_j(z)\,dz\bigg|\\
&+\sup_{\varphi\in{\mathcal C}_\alpha}\bigg|\int_{\mathbb R^n}\big[b_B-b(z)\big]\varphi_t(y-z)f^\infty_j(z)\,dz\bigg|.
\end{split}
\end{equation*}
By definition, we thus have
\begin{equation*}
\begin{split}
\big|\big[b,\mathcal S_\alpha\big](f^\infty_j)(x)\big|&\leq\big|b(x)-b_B\big|\cdot\mathcal S_\alpha(f^\infty_j)(x)
+\mathcal S_\alpha\Big([b_{B}-b]f^\infty_j\Big)(x).
\end{split}
\end{equation*}
From this and Minkowski' inequality for series, we further obtain
\begin{equation*}
\begin{split}
\bigg(\sum_{j}\big|\big[b,\mathcal S_\alpha\big](f^\infty_j)(x)\big|^2\bigg)^{1/2}
&\leq\big|b(x)-b_B\big|\bigg(\sum_{j}\big|\mathcal S_\alpha(f^{\infty}_j)(x)\big|^2\bigg)^{1/2}\\
&+\bigg(\sum_{j}\Big|\mathcal S_\alpha\Big([b_{B}-b]f^\infty_j\Big)(x)\Big|^2\bigg)^{1/2}.
\end{split}
\end{equation*}
For any $\varphi\in{\mathcal C}_\alpha$, $0<\alpha\le1$, $j=1,2,\ldots$, and $(y,t)\in\Gamma(x)$ with $x\in B$, we have
\begin{align}\label{Key3}
&\bigg|\int_{\mathbb R^n}\big[b_B-b(z)\big]\varphi_t(y-z)f^\infty_j(z)\,dz\bigg|\notag\\
&=\bigg|\int_{(2B)^c}\big[b_B-b(z)\big]\varphi_t(y-z)f_j(z)\,dz\bigg|\notag\\
&\leq C\cdot t^{-n}\int_{(2B)^c\cap\{z:|y-z|\le t\}}\big|b(z)-b_B\big|\big|f_j(z)\big|\,dz\notag\\
&\leq C\cdot t^{-n}\sum_{l=1}^\infty\int_{(2^{l+1}B\backslash 2^{l}B)\cap\{z:|y-z|\le t\}}\big|b(z)-b_B\big|\big|f_j(z)\big|\,dz.
\end{align}
Hence, for any $x\in B$, by using the inequalities \eqref{Key3} and \eqref{Key2} together with Minkowski's inequality for integrals, we can deduce that
\begin{equation*}
\begin{split}
&\mathcal S_\alpha\Big([b_{B}-b]f^\infty_j\Big)(x)\\
&=\left(\iint_{\Gamma(x)}\sup_{\varphi\in{\mathcal C}_\alpha}\bigg|\int_{\mathbb R^n}\big[b_B-b(z)\big]\varphi_t(y-z)f^\infty_j(z)\,dz\bigg|^2\frac{dydt}{t^{n+1}}\right)^{1/2}\\
&\leq C\left(\int_{2^{l-2}r_B}^\infty\int_{|x-y|<t}\bigg|t^{-n}\sum_{l=1}^\infty\int_{2^{l+1}B\backslash 2^{l}B}\big|b(z)-b_B\big|\big|f_j(z)\big|\,dz\bigg|^2\frac{dydt}{t^{n+1}}\right)^{1/2}\\
&\le C\left(\sum_{l=1}^\infty\int_{2^{l+1}B\backslash 2^{l}B}\big|b(z)-b_B\big|\big|f_j(z)\big|\,dz\right)
\left(\int_{2^{l-2}r_B}^\infty\frac{dt}{t^{2n+1}}\right)^{1/2}\\
&\leq C\sum_{l=1}^\infty\frac{1}{|2^{l+1}B|}\int_{2^{l+1}B\backslash 2^{l}B}\big|b(z)-b_B\big|\big|f_j(z)\big|\,dz.
\end{split}
\end{equation*}
Therefore, by duality and Cauchy--Schwarz inequality, we get
\begin{align}\label{key estimate2}
&\bigg(\sum_{j}\Big|\mathcal S_\alpha\Big([b_{B}-b]f^\infty_j\Big)(x)\Big|^2\bigg)^{1/2}\notag\\
&\leq C\Bigg(\sum_{j}\bigg|\sum_{l=1}^\infty\frac{1}{|2^{l+1}B|}
\int_{2^{l+1}B\backslash 2^{l}B}\big|b(z)-b_B\big|\big|f_j(z)\big|\,dz\bigg|^2\Bigg)^{1/2}\notag\\
&\leq C\sup_{(\sum_j|\zeta_j|^2)^{1/2}\leq1}\sum_{j}\bigg(\sum_{l=1}^\infty\frac{1}{|2^{l+1}B|}
\int_{2^{l+1}B}\big|b(z)-b_B\big|\big|f_j(z)\big|\,dz\cdot\zeta_j\bigg)\notag\\
&\leq C\sum_{l=1}^\infty\frac{1}{|2^{l+1}B|}\int_{2^{l+1}B}\sup_{(\sum_j|\zeta_j|^2)^{1/2}\leq1}
\bigg(\sum_{j}\big|b(z)-b_B\big|\big|f_j(z)\big|\cdot\zeta_j\bigg)dz\notag\\
&\leq C\sum_{l=1}^\infty\frac{1}{|2^{l+1}B|}\int_{2^{l+1}B}\big|b(z)-b_B\big|\bigg(\sum_{j}\big|f_j(z)\big|^2\bigg)^{1/2}dz.
\end{align}
Consequently, from the pointwise estimates \eqref{key estimate1} and \eqref{key estimate2}, it follows that
\begin{equation*}
\begin{split}
J_2&\leq\frac{C}{\theta(w(B))^{1/p}}\bigg(\int_B\big|b(x)-b_B\big|^pw(x)\,dx\bigg)^{1/p}
\times\Bigg(\sum_{l=1}^\infty\frac{1}{|2^{l+1}B|}\int_{2^{l+1}B}\bigg(\sum_{j}\big|f_j(z)\big|^2\bigg)^{1/2}\,dz\Bigg)\\
&+C\cdot\frac{w(B)^{1/p}}{\theta(w(B))^{1/p}}
\sum_{l=1}^\infty\frac{1}{|2^{l+1}B|}\int_{2^{l+1}B}\big|b_{2^{l+1}B}-b_B\big|\bigg(\sum_{j}\big|f_j(z)\big|^2\bigg)^{1/2}\,dz\\
&+C\cdot\frac{w(B)^{1/p}}{\theta(w(B))^{1/p}}
\sum_{l=1}^\infty\frac{1}{|2^{l+1}B|}\int_{2^{l+1}B}\big|b(z)-b_{2^{l+1}B}\big|\bigg(\sum_{j}\big|f_j(z)\big|^2\bigg)^{1/2}\,dz\\
&:=J_3+J_4+J_5.
\end{split}
\end{equation*}
Recall that the following estimate holds for $w\in A_p$ and $1\leq p<\infty$:
\begin{equation}\label{BMO2}
\bigg(\int_B\big|b(x)-b_B\big|^pw(x)\,dx\bigg)^{1/p}\leq C\|b\|_*\cdot w(B)^{1/p}.
\end{equation}
Indeed, since $w\in A_p$ with $1\leq p<\infty$, we know that there exists a number $r>1$ such that $w\in RH_r$. By using H\"older's inequality and John--Nirenberg's inequality for $BMO$ functions (see \cite{duoand,john}), we find that
\begin{equation*}
\begin{split}
\bigg(\int_B\big|b(x)-b_B\big|^pw(x)\,dx\bigg)^{1/p}&\leq\left(\int_{B}\big|b(x)-b_{B}\big|^{pr'}dx\right)^{1/{(pr')}}
\left(\int_{B}w(x)^r\,dx\right)^{1/{(pr)}}\\
&\leq C\cdot w(B)^{1/p}\left(\frac{1}{|B|}\int_{B}\big|b(x)-b_{B}\big|^{pr'}dx\right)^{1/{(pr')}}\\
&\leq C\|b\|_*\cdot w(B)^{1/p}.
\end{split}
\end{equation*}
Furthermore, it follows from (\ref{BMO2}), H\"older's inequality and the $A_p$ condition on $w$ that
\begin{equation*}
\begin{split}
J_3&\leq C\|b\|_*\cdot\frac{w(B)^{1/p}}{\theta(w(B))^{1/p}}\sum_{l=1}^\infty\frac{1}{|2^{l+1}B|}\Bigg(\int_{2^{l+1}B}
\bigg(\sum_{j}\big|f_j(z)\big|^2\bigg)^{p/2}w(z)\,dz\Bigg)^{1/p}\\
&\ \times\left(\int_{2^{l+1}B}w(z)^{-{p'}/p}\,dz\right)^{1/{p'}}\\
&\leq C\bigg\|\bigg(\sum_{j}\big|f_j\big|^2\bigg)^{1/2}\bigg\|_{\mathcal M^{p,\theta}(w)}
\times\sum_{l=1}^\infty\frac{\theta(w(2^{l+1}B))^{1/p}}{\theta(w(B))^{1/p}}\cdot\frac{w(B)^{1/p}}{w(2^{l+1}B)^{1/p}}\\
&\leq C\bigg\|\bigg(\sum_{j}\big|f_j\big|^2\bigg)^{1/2}\bigg\|_{\mathcal M^{p,\theta}(w)},
\end{split}
\end{equation*}
where in the last inequality we have used the estimate \eqref{theta1}. For the term $J_4$, since $b\in BMO(\mathbb R^n)$, a simple calculation shows that for every ball $B$~(or cube $Q$)
\begin{equation}\label{BMO}
\big|b_{2^{l+1}B}-b_B\big|\leq C\cdot(l+1)\|b\|_*.
\end{equation}
We then use H\"older's inequality, (\ref{BMO}) and the $A_p$ condition on $w$ to obtain
\begin{equation*}
\begin{split}
J_4&\leq C\|b\|_*\cdot\frac{w(B)^{1/p}}{\theta(w(B))^{1/p}}\sum_{l=1}^\infty\frac{l+1}{|2^{l+1}B|}\Bigg(\int_{2^{l+1}B}
\bigg(\sum_{j}\big|f_j(z)\big|^2\bigg)^{p/2}w(z)\,dz\Bigg)^{1/p}\\
&\ \times\left(\int_{2^{l+1}B}w(z)^{-{p'}/p}\,dz\right)^{1/{p'}}\\
&\leq C\bigg\|\bigg(\sum_{j}\big|f_j\big|^2\bigg)^{1/2}\bigg\|_{\mathcal M^{p,\theta}(w)}
\times\sum_{l=1}^\infty\big(l+1\big)\cdot\frac{\theta(w(2^{l+1}B))^{1/p}}{\theta(w(B))^{1/p}}\cdot\frac{w(B)^{1/p}}{w(2^{l+1}B)^{1/p}}.
\end{split}
\end{equation*}
Applying the $\mathcal D_\kappa$ condition (\ref{D condition}) of $\theta$ and the inequality \eqref{compare} again together with the fact that $0\leq\kappa<1$, we thus have
\begin{align}\label{theta3}
\sum_{l=1}^\infty\big(l+1\big)\cdot\frac{\theta(w(2^{l+1}B))^{1/p}}{\theta(w(B))^{1/p}}\cdot\frac{w(B)^{1/p}}{w(2^{l+1}B)^{1/p}}
&\leq C\sum_{l=1}^\infty\big(l+1\big)\cdot\frac{w(B)^{{(1-\kappa)}/p}}{w(2^{l+1}B)^{{(1-\kappa)}/p}}\notag\\
&\leq C\sum_{l=1}^\infty\big(l+1\big)\cdot\left(\frac{|B|}{|2^{l+1}B|}\right)^{\delta {(1-\kappa)}/p}\notag\\
&\leq C\sum_{l=1}^\infty\big(l+1\big)\cdot\left(\frac{1}{2^{(l+1)n}}\right)^{\delta {(1-\kappa)}/p}\notag\\
&\leq C,
\end{align}
which in turn gives that
\begin{equation*}
J_4\leq C\bigg\|\bigg(\sum_{j}\big|f_j\big|^2\bigg)^{1/2}\bigg\|_{\mathcal M^{p,\theta}(w)}.
\end{equation*}
It remains to estimate the last term $J_5$. An application of H\"older's inequality gives us that
\begin{equation*}
\begin{split}
J_5&\leq C\cdot\frac{w(B)^{1/p}}{\theta(w(B))^{1/p}}\sum_{l=1}^\infty\frac{1}{|2^{l+1}B|}\Bigg(\int_{2^{l+1}B}
\bigg(\sum_{j}\big|f_j(z)\big|^2\bigg)^{p/2}w(z)\,dz\Bigg)^{1/p}\\
&\ \times\left(\int_{2^{l+1}B}\big|b(z)-b_{2^{l+1}B}\big|^{p'}w(z)^{-{p'}/p}\,dz\right)^{1/{p'}}.
\end{split}
\end{equation*}
If we set $\mu(z)=w(z)^{-{p'}/p}$, then we have $\mu\in A_{p'}$ because $w\in A_p$(see \cite{duoand,garcia}). Then it follows from the inequality (\ref{BMO2}) and the $A_p$ condition that
\begin{align}\label{BMO2}
\left(\int_{2^{l+1}B}\big|b(z)-b_{2^{l+1}B}\big|^{p'}\mu(z)\,dz\right)^{1/{p'}}&\leq C\|b\|_*\cdot \mu\big(2^{l+1}B\big)^{1/{p'}}\notag\\
&= C\|b\|_*\cdot\left(\int_{2^{l+1}B}w(z)^{-{p'}/p}dz\right)^{1/{p'}}\notag\\
&\leq C\|b\|_*\cdot\frac{|2^{l+1}B|}{w(2^{l+1}B)^{1/p}}.
\end{align}
Therefore, in view of the estimates \eqref{BMO2} and \eqref{theta1}, we conclude that
\begin{equation*}
\begin{split}
J_5&\leq C\|b\|_*\cdot\frac{w(B)^{1/p}}{\theta(w(B))^{1/p}}\sum_{l=1}^\infty\frac{1}{w(2^{l+1}B)^{1/p}}\Bigg(\int_{2^{l+1}B}
\bigg(\sum_{j}\big|f_j(z)\big|^2\bigg)^{p/2}w(z)\,dz\Bigg)^{1/p}\\
&\leq C\bigg\|\bigg(\sum_{j}\big|f_j\big|^2\bigg)^{1/2}\bigg\|_{\mathcal M^{p,\theta}(w)}
\times\sum_{l=1}^\infty\frac{\theta(w(2^{l+1}B))^{1/p}}{\theta(w(B))^{1/p}}\cdot\frac{w(B)^{1/p}}{w(2^{l+1}B)^{1/p}}\\
&\leq C\bigg\|\bigg(\sum_{j}\big|f_j\big|^2\bigg)^{1/2}\bigg\|_{\mathcal M^{p,\theta}(w)}.
\end{split}
\end{equation*}
Summarizing the above discussions, we finish the proof of the main theorem.
\end{proof}

\section{Proof of Theorem \ref{mainthm:4}}

\begin{proof}
Inspired by the works in \cite{perez2,perez4,zhang}, for any fixed $\sigma>0$, we apply the Calder\'on--Zygmund decomposition of $\vec{f}=(f_1,f_2,\ldots)$ at height $\sigma$ to obtain a collection of disjoint non-overlapping dyadic cubes $\{Q_i\}$ such that the following property holds (see \cite{stein,perez4})
\begin{equation}\label{decomposition}
\sigma<\frac{1}{|Q_i|}\int_{Q_i}\bigg(\sum_{j}\big|f_j(y)\big|^2\bigg)^{1/2}dy\leq 2^n\cdot\sigma,
\end{equation}
where $Q_i=Q(c_i,\ell_i)$ denotes the cube centered at $c_i$ with side length $\ell_i$ and all cubes are
assumed to have their sides parallel to the coordinate axes. If we set $E=\bigcup_i Q_i$, then
\begin{equation*}
\bigg(\sum_{j}\big|f_j(y)\big|^2\bigg)^{1/2}\leq\sigma, \quad \mbox{a.e. }\, x\in\mathbb R^n\backslash E.
\end{equation*}
Now we proceed to construct vector-valued version of the Calder\'on--Zygmund decomposition. Define two vector-valued functions $\vec{g}=(g_1,g_2,\ldots)$ and $\vec{h}=(h_1,h_2,\ldots)$ as follows:
\begin{equation*}
g_j(x)=
\begin{cases}
f_j(x) &  \mbox{if}\;\; x\in E^c,\\
\displaystyle\frac{1}{|Q_i|}\int_{Q_i}f_j(y)\,dy    &  \mbox{if}\;\; x\in Q_i,
\end{cases}
\end{equation*}
and
\begin{equation*}
h_j(x)=f_j(x)-g_j(x)=\sum_i h_{ij}(x),\quad j=1,2,\dots,
\end{equation*}
where $h_{ij}(x)=h_j(x)\cdot\chi_{Q_i}(x)=\big(f_j(x)-g_j(x)\big)\cdot\chi_{Q_i}(x)$. Then we have
\begin{equation}\label{pointwise estimate g}
\bigg(\sum_{j}\big|g_j(x)\big|^2\bigg)^{1/2}\leq C\cdot\sigma, \quad \mbox{a.e. }\, x\in\mathbb R^n,
\end{equation}
and
\begin{equation}\label{f=g+h}
\vec{f}=\vec{g}+\vec{h}:=(g_1+h_1,g_2+h_2,\ldots).
\end{equation}
Obviously, $h_{ij}$ is supported on $Q_i$, $i,j=1,2,\dots$,
\begin{equation*}
\int_{\mathbb R^n}h_{ij}(x)\,dx=0,\quad \mbox{and}\quad \big\|h_{ij}\big\|_{L^1}=\int_{\mathbb R^n}\big|h_{ij}(x)\big|\,dx\leq 2\int_{Q_i}\big|f_j(x)\big|\,dx
\end{equation*}
according to the above decomposition. By \eqref{f=g+h} and Minkowski's inequality,
\begin{equation*}
\bigg(\sum_{j}\big|\big[b,\mathcal S_{\alpha}\big](f_j)(x)\big|^2\bigg)^{1/2}\leq\bigg(\sum_{j}\big|\big[b,\mathcal S_{\alpha}\big](g_j)(x)\big|^2\bigg)^{1/2}+
\bigg(\sum_{j}\big|\big[b,\mathcal S_{\alpha}\big](h_j)(x)\big|^2\bigg)^{1/2}.
\end{equation*}
Then we can write
\begin{equation*}
\begin{split}
&w\bigg(\bigg\{x\in\mathbb R^n:\bigg(\sum_{j}\big|\big[b,\mathcal S_{\alpha}\big](f_j)(x)\big|^2\bigg)^{1/2}>\sigma\bigg\}\bigg)\\
&\leq
w\bigg(\bigg\{x\in\mathbb R^n:\bigg(\sum_{j}\big|\big[b,\mathcal S_{\alpha}\big](g_j)(x)\big|^2\bigg)^{1/2}>\sigma/2\bigg\}\bigg)\\
&+w\bigg(\bigg\{x\in\mathbb R^n:\bigg(\sum_{j}\big|\big[b,\mathcal S_{\alpha}\big](h_j)(x)\big|^2\bigg)^{1/2}>\sigma/2\bigg\}\bigg)\\
&:=K_1+K_2.
\end{split}
\end{equation*}
Observe that $w\in A_1\subset A_2$. Applying Chebyshev's inequality and Theorem \ref{commutator thm}, we obtain
\begin{equation*}
K_1\leq \frac{4}{\sigma^2}\cdot\bigg\|\bigg(\sum_{j}\big|\big[b,\mathcal S_{\alpha}\big](g_j)\big|^2\bigg)^{1/2}\bigg\|^2_{L^2_w}
\leq \frac{C}{\sigma^2}\cdot\bigg\|\bigg(\sum_{j}\big|g_j\big|^2\bigg)^{1/2}\bigg\|^2_{L^2_w}.
\end{equation*}
Moreover, by the inequality (\ref{pointwise estimate g}),
\begin{align*}
&\bigg\|\bigg(\sum_{j}\big|g_j\big|^2\bigg)^{1/2}\bigg\|^2_{L^2_w}\\
&\leq
C\cdot\sigma\int_{\mathbb R^n}\bigg(\sum_{j}\big|g_j(x)\big|^2\bigg)^{1/2}w(x)\,dx\\
&\leq C\cdot\sigma\left(\int_{E^c}\bigg(\sum_{j}\big|f_j(x)\big|^2\bigg)^{1/2}w(x)\,dx
+\int_{\bigcup_i Q_i}\bigg(\sum_{j}\big|g_j(x)\big|^2\bigg)^{1/2}w(x)\,dx\right).
\end{align*}
Recall that $g_j(x)=\displaystyle\frac{1}{|Q_i|}\int_{Q_i}f_j(y)\,dy$ when $x\in Q_i$. As before, by using duality and Cauchy--Schwarz inequality, we can see the following estimate is valid for all $x\in Q_i$.
\begin{equation}\label{vectorg}
\bigg(\sum_{j}\big|g_j(x)\big|^2\bigg)^{1/2}\leq\frac{1}{|Q_i|}\int_{Q_i}\bigg(\sum_{j}\big|f_j(y)\big|^2\bigg)^{1/2}dy.
\end{equation}
This estimate \eqref{vectorg} along with the $A_1$ condition yields
\begin{align}\label{g}
&\bigg\|\bigg(\sum_{j}\big|g_j\big|^2\bigg)^{1/2}\bigg\|^2_{L^2_w}\notag\\
&\le C\cdot\sigma\left(\int_{\mathbb R^n}\bigg(\sum_{j}\big|f_j(x)\big|^2\bigg)^{1/2}w(x)\,dx
+\sum_i\frac{w(Q_i)}{|Q_i|}\int_{Q_i}\bigg(\sum_{j}\big|f_j(y)\big|^2\bigg)^{1/2}dy\right)\notag\\
&\leq C\cdot\sigma\left(\int_{\mathbb R^n}\bigg(\sum_{j}\big|f_j(x)\big|^2\bigg)^{1/2}w(x)\,dx
+\sum_i\underset{y\in Q_i}{\mbox{ess\,inf}}\,w(y)\int_{Q_i}\bigg(\sum_{j}\big|f_j(y)\big|^2\bigg)^{1/2}dy\right)\notag\\
&\leq C\cdot\sigma\left(\int_{\mathbb R^n}\bigg(\sum_{j}\big|f_j(x)\big|^2\bigg)^{1/2}w(x)\,dx
+\int_{\bigcup_i Q_i}\bigg(\sum_{j}\big|f_j(y)\big|^2\bigg)^{1/2}w(y)\,dy\right)\notag\\
&\le C\cdot\sigma\int_{\mathbb R^n}\bigg(\sum_{j}\big|f_j(x)\big|^2\bigg)^{1/2}w(x)\,dx.
\end{align}
So we have
\begin{equation*}
K_1\leq C\int_{\mathbb R^n}\frac{\|\vec{f}(x)\|_{\ell^2}}{\sigma}\cdot w(x)\,dx\leq C\int_{\mathbb R^n}\Phi\bigg(\frac{\|\vec{f}(x)\|_{\ell^2}}{\sigma}\bigg)\cdot w(x)\,dx.
\end{equation*}
To deal with the other term $K_2$, let $Q_i^*=2\sqrt n Q_i$ be the cube concentric with $Q_i$ such that $\ell(Q_i^*)=(2\sqrt n)\ell(Q_i)$. Then we can further decompose $K_2$ as follows.
\begin{equation*}
\begin{split}
K_2\le&\,w\bigg(\bigg\{x\in\bigcup_i Q_i^*:\bigg(\sum_{j}\big|\big[b,\mathcal S_{\alpha}\big](h_j)(x)\big|^2\bigg)^{1/2}>\sigma/2\bigg\}\bigg)\\
&+w\bigg(\bigg\{x\notin \bigcup_i Q_i^*:\bigg(\sum_{j}\big|\big[b,\mathcal S_{\alpha}\big](h_j)(x)\big|^2\bigg)^{1/2}>\sigma/2\bigg\}\bigg)\\
:=&\,K_3+K_4.
\end{split}
\end{equation*}
Since $w\in A_1$, then by the inequality (\ref{weights}), we can get
\begin{equation*}
K_3\leq\sum_i w\big(Q_i^*\big)\le C\sum_i w(Q_i).
\end{equation*}
Furthermore, it follows from the inequality (\ref{decomposition}) and the $A_1$ condition that
\begin{equation*}
\begin{split}
K_3&\leq C\sum_i\frac{\,1\,}{\sigma}\cdot\underset{y\in Q_i}{\mbox{ess\,inf}}\,w(y)
\int_{Q_i}\bigg(\sum_{j}\big|f_j(y)\big|^2\bigg)^{1/2}dy\\
&\leq\frac{C}{\sigma}\sum_i\int_{Q_i}\bigg(\sum_{j}\big|f_j(y)\big|^2\bigg)^{1/2}w(y)\,dy\\
&\leq\frac{C}{\sigma}\int_{\bigcup_i Q_i}\bigg(\sum_{j}\big|f_j(y)\big|^2\bigg)^{1/2}w(y)\,dy\\
\end{split}
\end{equation*}
\begin{equation*}
\begin{split}
&\leq C\int_{\mathbb R^n}\frac{\|\vec{f}(y)\|_{\ell^2}}{\sigma}\cdot w(y)\,dy
\leq C\int_{\mathbb R^n}\Phi\bigg(\frac{\|\vec{f}(y)\|_{\ell^2}}{\sigma}\bigg)\cdot w(y)\,dy.
\end{split}
\end{equation*}
Arguing as in the proof of Theorem \ref{mainthm:3}, for any given $x\in \mathbb R^n$, $(y,t)\in\Gamma(x)$ and for $j=1,2,\dots$, we also find that
\begin{align*}
&\sup_{\varphi\in{\mathcal C}_\alpha}\bigg|\int_{\mathbb R^n}\big[b(x)-b(z)\big]\varphi_t(y-z)\sum_i h_{ij}(z)\,dz\bigg|\notag\\
&\leq\sup_{\varphi\in{\mathcal C}_\alpha}\bigg|\sum_i \big[b(x)-b_{Q_i}\big]\int_{\mathbb R^n}\varphi_t(y-z)h_{ij}(z)\,dz\bigg|\notag\\
&+\sup_{\varphi\in{\mathcal C}_\alpha}\bigg|\int_{\mathbb R^n}\varphi_t(y-z)\sum_i \big[b_{Q_i}-b(z)\big]h_{ij}(z)\,dz\bigg|\\
&\leq\sum_i\big|b(x)-b_{Q_i}\big|\cdot\sup_{\varphi\in{\mathcal C}_\alpha}\bigg|\int_{\mathbb R^n}\varphi_t(y-z)h_{ij}(z)\,dz\bigg|\notag\\
&+\sup_{\varphi\in{\mathcal C}_\alpha}\bigg|\int_{\mathbb R^n}\varphi_t(y-z)\sum_i \big[b_{Q_i}-b(z)\big]h_{ij}(z)\,dz\bigg|.
\end{align*}
Hence, by definition, we have that for any given $x\in \mathbb R^n$ and $j\in\mathbb Z^+$,
\begin{equation}\label{commutator estimate}
\begin{split}
\big|\big[b,\mathcal S_\alpha\big](h_j)(x)\big|&\leq\sum_i\big|b(x)-b_{Q_i}\big|\cdot\mathcal S_\alpha(h_{ij})(x)+\mathcal S_\alpha\bigg(\sum_i[b_{Q_i}-b]h_{ij}\bigg)(x).
\end{split}
\end{equation}
On the other hand, by duality argument and Cauchy--Schwarz inequality, we can see the following vector-valued form of Minkowski's inequality is true for any real numbers $\nu_{ij}\in\mathbb R$, $i,j=1,2,\dots$.
\begin{equation}\label{min}
\bigg(\sum_{j}\bigg|\sum_i\big|\nu_{ij}\big|\bigg|^2\bigg)^{1/2}\leq\sum_i\bigg(\sum_{j}\big|\nu_{ij}\big|^2\bigg)^{1/2}.
\end{equation}
Therefore, by the estimates (\ref{commutator estimate}) and (\ref{min}), we get
\begin{equation*}
\begin{split}
\bigg(\sum_{j}\big|\big[b,\mathcal S_{\alpha}\big](h_j)(x)\big|^2\bigg)^{1/2}
&\leq\bigg(\sum_{j}\bigg|\sum_i\big|b(x)-b_{Q_i}\big|\cdot\mathcal S_\alpha(h_{ij})(x)\bigg|^2\bigg)^{1/2}\\
&+\bigg(\sum_{j}\bigg|\mathcal S_\alpha\bigg(\sum_i[b_{Q_i}-b]h_{ij}\bigg)(x)\bigg|^2\bigg)^{1/2}\\
&\leq\sum_i\big|b(x)-b_{Q_i}\big|\cdot\bigg(\sum_{j}\big|\mathcal S_\alpha(h_{ij})(x)\big|^2\bigg)^{1/2}\\
&+\bigg(\sum_{j}\bigg|\mathcal S_\alpha\bigg(\sum_i[b_{Q_i}-b]h_{ij}\bigg)(x)\bigg|^2\bigg)^{1/2}.
\end{split}
\end{equation*}
Then the term $K_4$ can be divided into two parts.
\begin{equation*}
\begin{split}
K_4\leq &w\bigg(\bigg\{x\notin \bigcup_i Q_i^*:
\sum_i\big|b(x)-b_{Q_i}\big|\cdot\bigg(\sum_{j}\big|\mathcal S_\alpha(h_{ij})(x)\big|^2\bigg)^{1/2}>\sigma/4\bigg\}\bigg)\\
&+w\bigg(\bigg\{x\notin \bigcup_i Q_i^*:
\bigg(\sum_{j}\bigg|\mathcal S_\alpha\bigg(\sum_i[b_{Q_i}-b]h_{ij}\bigg)(x)\bigg|^2\bigg)^{1/2}>\sigma/4\bigg\}\bigg)\\
:=&K_5+K_6.
\end{split}
\end{equation*}
It follows directly from the Chebyshev's inequality that
\begin{equation*}
\begin{split}
K_5&\leq\frac{\,4\,}{\sigma}\int_{\mathbb R^n\backslash\bigcup_i Q_i^*}\sum_i\big|b(x)-b_{Q_i}\big|\cdot
\bigg(\sum_{j}\big|\mathcal S_\alpha(h_{ij})(x)\big|^2\bigg)^{1/2}w(x)\,dx\\
&\leq\frac{\,4\,}{\sigma}\sum_i\left(\int_{(Q_i^*)^c}\big|b(x)-b_{Q_i}\big|\cdot
\bigg(\sum_{j}\big|\mathcal S_\alpha(h_{ij})(x)\big|^2\bigg)^{1/2}w(x)\,dx\right).\\
\end{split}
\end{equation*}
Denote by $c_i$ the center of $Q_i$. For any $\varphi\in{\mathcal C}_\alpha$, $0<\alpha\le1$, by the cancellation condition of $h_{ij}$ over $Q_i$, we obtain that for any $(y,t)\in\Gamma(x)$ and for $i,j=1,2,\dots$,
\begin{align}\label{kernel estimate1}
\big|(\varphi_t*h_{ij})(y)\big|&=\left|\int_{Q_i}\big[\varphi_t(y-z)-\varphi_t(y-c_i)\big]h_{ij}(z)\,dz\right|\notag\\
&\leq\int_{Q_i\cap\{z:|z-y|\le t\}}\frac{|z-c_i|^\alpha}{t^{n+\alpha}}\big|h_{ij}(z)\big|\,dz\notag\\
&\leq C\cdot\frac{\ell(Q_i)^{\alpha}}{t^{n+\alpha}}\int_{Q_i\cap\{z:|z-y|\le t\}}\big|h_{ij}(z)\big|\,dz.
\end{align}
In addition, for any $z\in Q_i$ and $x\in (Q^*_i)^c$, we have $|z-c_i|<\frac{|x-c_i|}{2}$. Thus, for all $(y,t)\in\Gamma(x)$ and $|z-y|\le t$ with $z\in Q_i$, it is easy to see that
\begin{equation}\label{2t}
t+t\ge|x-y|+|y-z|\ge|x-z|\ge|x-c_i|-|z-c_i|\ge\frac{|x-c_i|}{2}.
\end{equation}
Hence, for any $x\in (Q^*_i)^c$, by using the above inequalities (\ref{kernel estimate1}) and (\ref{2t}) along with the fact that $ \big\|h_{ij}\big\|_{L^1}\leq 2\int_{Q_i}\big|f_j(x)\big|\,dx$, we obtain that for any $i,j=1,2,\dots$,
\begin{equation*}
\begin{split}
\big|\mathcal S_{\alpha}(h_{ij})(x)\big|&=\left(\iint_{\Gamma(x)}
\bigg(\sup_{\varphi\in{\mathcal C}_\alpha}\big|(\varphi_t*{h_{ij}})(y)\big|\bigg)^2\frac{dydt}{t^{n+1}}\right)^{1/2}\\
&\leq C\cdot\ell(Q_i)^{\alpha}\bigg(\int_{Q_i}\big|h_{ij}(z)\big|\,dz\bigg)
\left(\int_{\frac{|x-c_i|}{4}}^\infty\int_{|y-x|<t}\frac{dydt}{t^{2(n+\alpha)+n+1}}\right)^{1/2}\\
&\leq C\cdot\ell(Q_i)^{\alpha}\bigg(\int_{Q_i}\big|h_{ij}(z)\big|\,dz\bigg)
\left(\int_{\frac{|x-c_i|}{4}}^\infty\frac{dt}{t^{2(n+\alpha)+1}}\right)^{1/2}\\
\end{split}
\end{equation*}
\begin{equation*}
\begin{split}
&\leq C\cdot\frac{\ell(Q_i)^{\alpha}}{|x-c_i|^{n+\alpha}}\bigg(\int_{Q_i}\big|f_j(z)\big|\,dz\bigg).
\end{split}
\end{equation*}
Furthermore, by duality and Cauchy--Schwarz inequality again,
\begin{equation*}
\bigg(\sum_{j}\big|\mathcal S_\alpha(h_{ij})(x)\big|^2\bigg)^{1/2}\leq
C\cdot\frac{\ell(Q_i)^{\alpha}}{|x-c_i|^{n+\alpha}}\times\int_{Q_i}\bigg(\sum_{j}\big|f_j(z)\big|^2\bigg)^{1/2}dz.
\end{equation*}
Since $Q_i^*=2\sqrt n Q_i\supset 2Q_i$, then $(Q_i^*)^c\subset (2Q_i)^c$. This fact together with the above pointwise estimate yields
\begin{equation*}
\begin{split}
K_5&\leq\frac{C}{\sigma}\sum_i
\left(\ell(Q_i)^{\alpha}\int_{Q_i}\bigg(\sum_{j}\big|f_j(z)\big|^2\bigg)^{1/2}dz
\times\int_{(Q_i^*)^c}\big|b(x)-b_{Q_i}\big|\cdot\frac{w(x)}{|x-c_i|^{n+\alpha}}dx\right)\\
&\leq\frac{C}{\sigma}\sum_i
\left(\ell(Q_i)^{\alpha}\int_{Q_i}\bigg(\sum_{j}\big|f_j(z)\big|^2\bigg)^{1/2}dz
\times\int_{(2Q_i)^c}\big|b(x)-b_{Q_i}\big|\cdot\frac{w(x)}{|x-c_i|^{n+\alpha}}dx\right)\\
&\leq\frac{C}{\sigma}\sum_i
\left(\ell(Q_i)^{\alpha}\int_{Q_i}\bigg(\sum_{j}\big|f_j(z)\big|^2\bigg)^{1/2}dz
\times\sum_{l=1}^\infty\int_{2^{l+1}Q_i\backslash 2^{l}Q_i}\big|b(x)-b_{2^{l+1}Q_i}\big|\cdot\frac{w(x)}{|x-c_i|^{n+\alpha}}dx\right)\\
&+\frac{C}{\sigma}\sum_i
\left(\ell(Q_i)^{\alpha}\int_{Q_i}\bigg(\sum_{j}\big|f_j(z)\big|^2\bigg)^{1/2}dz\times\sum_{l=1}^\infty\int_{2^{l+1}Q_i\backslash 2^{l}Q_i}\big|b_{2^{l+1}Q_i}-b_{Q_i}\big|\cdot\frac{w(x)}{|x-c_i|^{n+\alpha}}dx\right)\\
&:=\mbox{\upshape I+II}.
\end{split}
\end{equation*}
For the term I, it then follows from (\ref{BMO2})(consider $2^{l+1}Q_i$ instead of $B$), (\ref{general weights}) and the fact that $w\in A_1$,
\begin{equation*}
\begin{split}
\mbox{\upshape I}&\leq\frac{C}{\sigma}\sum_i
\left(\ell(Q_i)^{\alpha}\int_{Q_i}\bigg(\sum_{j}\big|f_j(z)\big|^2\bigg)^{1/2}dz\times\sum_{l=1}^\infty\frac{1}{[2^{l-1}\ell(Q_i)]^{n+\alpha}}
\int_{2^{l+1}Q_i}\big|b(x)-b_{2^{l+1}Q_i}\big|\cdot w(x)\,dx\right)\\
&\leq\frac{C\cdot\|b\|_*}{\sigma}\sum_i\left(\int_{Q_i}\bigg(\sum_{j}\big|f_j(z)\big|^2\bigg)^{1/2}dz
\times\sum_{l=1}^\infty\frac{w\big(2^{l+1}Q_i\big)}{(2^{l-1})^{n+\alpha}|Q_i|}\right)\\
&\leq\frac{C\cdot\|b\|_*}{\sigma}\sum_i\left(\int_{Q_i}\bigg(\sum_{j}\big|f_j(z)\big|^2\bigg)^{1/2}dz
\times\sum_{l=1}^\infty\frac{(2^{l+1})^nw\big(Q_i\big)}{(2^{l-1})^{n+\alpha}|Q_i|}\right)\\
&\leq\frac{C}{\sigma}\sum_i\left(\frac{w\big(Q_i\big)}{|Q_i|}\cdot\int_{Q_i}\bigg(\sum_{j}\big|f_j(z)\big|^2\bigg)^{1/2}dz
\times\sum_{l=1}^\infty\frac{1}{2^{l\alpha}}\right)\\
&\leq\frac{C}{\sigma}\sum_i\underset{z\in Q_i}{\mbox{ess\,inf}}\,w(z)\int_{Q_i}\bigg(\sum_{j}\big|f_j(z)\big|^2\bigg)^{1/2}dz\\
\end{split}
\end{equation*}
\begin{equation*}
\begin{split}
&\leq \frac{C}{\sigma}\int_{\bigcup_i Q_i}\bigg(\sum_{j}\big|f_j(z)\big|^2\bigg)^{1/2}w(z)\,dz\\
&\leq C\int_{\mathbb R^n}\frac{\|\vec{f}(z)\|_{\ell^2}}{\sigma}\cdot w(z)\,dz
\leq C\int_{\mathbb R^n}\Phi\bigg(\frac{\|\vec{f}(z)\|_{\ell^2}}{\sigma}\bigg)\cdot w(z)\,dz.
\end{split}
\end{equation*}
For the term II, from the inequalities (\ref{BMO}) and (\ref{general weights}) along with the fact that $w\in A_1$, it then follows that
\begin{equation*}
\begin{split}
\mbox{\upshape II}&\leq\frac{C\cdot\|b\|_*}{\sigma}\sum_i\left(\ell(Q_i)^{\alpha}\int_{Q_i}\bigg(\sum_{j}\big|f_j(z)\big|^2\bigg)^{1/2}dz
\times\sum_{l=1}^\infty\big(l+1\big)\cdot\frac{w\big(2^{l+1}Q_i\big)}{[2^{l-1}\ell(Q_i)]^{n+\alpha}}\right)\\
&\leq\frac{C\cdot\|b\|_*}{\sigma}\sum_i\left(\int_{Q_i}\bigg(\sum_{j}\big|f_j(z)\big|^2\bigg)^{1/2}dz
\times\sum_{l=1}^\infty\big(l+1\big)\cdot\frac{(2^{l+1})^nw\big(Q_i\big)}{(2^{l-1})^{n+\alpha}|Q_i|}\right)\\
&\leq\frac{C}{\sigma}\sum_i\left(\frac{w\big(Q_i\big)}{|Q_i|}\cdot\int_{Q_i}\bigg(\sum_{j}\big|f_j(z)\big|^2\bigg)^{1/2}dz
\times\sum_{l=1}^\infty\frac{l+1}{2^{l\alpha}}\right)\\
&\leq\frac{C}{\sigma}\sum_i\left(\frac{w\big(Q_i\big)}{|Q_i|}\cdot\int_{Q_i}\bigg(\sum_{j}\big|f_j(z)\big|^2\bigg)^{1/2}dz\right)
\leq C\int_{\mathbb R^n}\Phi\bigg(\frac{\|\vec{f}(z)\|_{\ell^2}}{\sigma}\bigg)\cdot w(z)\,dz.
\end{split}
\end{equation*}
On the other hand, by using the weighted weak-type (1,1) estimate of vector-valued intrinsic square functions (see Theorem B) and (\ref{min}), we have
\begin{equation*}
\begin{split}
K_6&\leq\frac{C}{\sigma}\int_{\mathbb R^n}\bigg(\sum_{j}\bigg|\sum_i\big|b(x)-b_{Q_i}\big|\big|h_{ij}(x)\big|\bigg|^2\bigg)^{1/2}w(x)\,dx\\
&\leq\frac{C}{\sigma}\int_{\mathbb R^n}\sum_i\big|b(x)-b_{Q_i}\big|\bigg(\sum_{j}\big|h_{ij}(x)\big|^2\bigg)^{1/2}w(x)\,dx\\
&=\frac{C}{\sigma}\sum_i\int_{Q_i}\big|b(x)-b_{Q_i}\big| \bigg(\sum_{j}\big|h_{ij}(x)\big|^2\bigg)^{1/2}w(x)\,dx\\
&\leq\frac{C}{\sigma}\sum_i\int_{Q_i}\big|b(x)-b_{Q_i}\big|\bigg(\sum_{j}\big|f_j(x)\big|^2\bigg)^{1/2}w(x)\,dx\\
&+\frac{C}{\sigma}\sum_i\frac{1}{|Q_i|}\int_{Q_i}\bigg(\sum_{j}\big|f_j(y)\big|^2\bigg)^{1/2}dy\times\int_{Q_i}\big|b(x)-b_{Q_i}\big|w(x)\,dx\\
&:=\mbox{\upshape III+IV}.
\end{split}
\end{equation*}
For the term III, by the generalized H\"older's inequality with weight (\ref{weighted holder}), (\ref{weighted exp}) and (\ref{equiv norm with weight}), we can deduce that
\begin{equation*}
\begin{split}
\mbox{\upshape III}=&\frac{C}{\sigma}\sum_i w(Q_i)\cdot\frac{1}{w(Q_i)}\int_{Q_i}\big|b(x)-b_{Q_i}\big|\bigg(\sum_{j}\big|f_j(x)\big|^2\bigg)^{1/2}w(x)\,dx\\
\end{split}
\end{equation*}
\begin{equation*}
\begin{split}
\leq&\frac{C}{\sigma}\sum_i w(Q_i)\cdot\big\|b-b_{Q_i}\big\|_{\exp L(w),Q_i}
\bigg\|\bigg(\sum_{j}\big|f_j\big|^2\bigg)^{1/2}\bigg\|_{L\log L(w),Q_i}\\
\leq&\frac{C\cdot\|b\|_*}{\sigma}\sum_i w(Q_i)\cdot
\bigg\|\bigg(\sum_{j}\big|f_j\big|^2\bigg)^{1/2}\bigg\|_{L\log L(w),Q_i}\\
\leq&\frac{C\cdot\|b\|_*}{\sigma}\sum_i w(Q_i)\cdot
\inf_{\eta>0}\left\{\eta+\frac{\eta}{w(Q_i)}\int_{Q_i}\Phi\bigg(\frac{\|\vec{f}(y)\|_{\ell^2}}{\eta}\bigg)\cdot w(y)\,dy\right\}\\
\leq&\frac{C\cdot\|b\|_*}{\sigma}\sum_i w(Q_i)\cdot
\left\{\sigma+\frac{\sigma}{w(Q_i)}\int_{Q_i}\Phi\bigg(\frac{\|\vec{f}(y)\|_{\ell^2}}{\sigma}\bigg)\cdot w(y)\,dy\right\}\\
\leq& C\left\{\sum_i w(Q_i)+\sum_i\int_{Q_i}\Phi\bigg(\frac{\|\vec{f}(y)\|_{\ell^2}}{\sigma}\bigg)\cdot w(y)\,dy\right\}\\
\leq& C\int_{\mathbb R^n}\Phi\bigg(\frac{\|\vec{f}(y)\|_{\ell^2}}{\sigma}\bigg)\cdot w(y)\,dy.
\end{split}
\end{equation*}
For the term IV, by the inequality (\ref{BMO2})(consider $Q_i$ instead of $B$) and the fact that $w\in A_1$, we conclude that
\begin{equation*}
\begin{split}
\mbox{\upshape IV}\leq&\frac{C\cdot\|b\|_*}{\sigma}\sum_i\frac{w(Q_i)}{|Q_i|}\int_{Q_i}\bigg(\sum_{j}\big|f_j(y)\big|^2\bigg)^{1/2}dy\\
\leq&\frac{C\cdot\|b\|_*}{\sigma}\sum_i\int_{Q_i}\bigg(\sum_{j}\big|f_j(y)\big|^2\bigg)^{1/2}w(y)\,dy\\
\leq&\frac{C\cdot\|b\|_*}{\sigma}\int_{\bigcup_i Q_i}\bigg(\sum_{j}\big|f_j(y)\big|^2\bigg)^{1/2}w(y)\,dy\\
\leq& C\int_{\mathbb R^n}\frac{\|\vec{f}(y)\|_{\ell^2}}{\sigma}\cdot w(y)\,dy
\leq C\int_{\mathbb R^n}\Phi\bigg(\frac{\|\vec{f}(y)\|_{\ell^2}}{\sigma}\bigg)\cdot w(y)\,dy.
\end{split}
\end{equation*}
Summing up all the above estimates, we get the desired result.
\end{proof}

\section{Proof of Theorem \ref{mainthm:5}}

\begin{proof}
Fix a ball $B=B(x_0,r_B)\subseteq\mathbb R^n$ and decompose $f_j=f^0_j+f^\infty_j$, where $f^0_j=f_j\cdot\chi_{2B}$ and $\chi_{2B}$ denotes the characteristic function of $2B=B(x_0,2r_B)$, $j=1,2,\dots$. For any $0\leq\kappa<1$, $w\in A_1$ and any given $\sigma>0$, we then write
\begin{equation*}
\begin{split}
&\frac{1}{\theta(w(B))}\cdot w\bigg(\bigg\{x\in B:\bigg(\sum_{j}\big|\big[b,\mathcal S_{\alpha}\big](f_j)(x)\big|^2\bigg)^{1/2}>\sigma\bigg\}\bigg)\\
\leq &\frac{1}{\theta(w(B))}\cdot w\bigg(\bigg\{x\in B:\bigg(\sum_{j}\big|\big[b,\mathcal S_{\alpha}\big](f^0_j)(x)\big|^2\bigg)^{1/2}>\sigma/2\bigg\}\bigg)\\
\end{split}
\end{equation*}
\begin{equation*}
\begin{split}
&+\frac{1}{\theta(w(B))}\cdot w\bigg(\bigg\{x\in B:\bigg(\sum_{j}\big|\big[b,\mathcal S_{\alpha}\big](f^{\infty}_j)(x)\big|^2\bigg)^{1/2}>\sigma/2\bigg\}\bigg)\\
:=&K'_1+K'_2.
\end{split}
\end{equation*}
By using Theorem \ref{mainthm:4}, we get
\begin{equation*}
\begin{split}
K'_1&\leq C\cdot\frac{1}{\theta(w(B))}\int_{\mathbb R^n}
\Phi\bigg(\frac{\,1\,}{\sigma}\Big(\sum_{j}|f^0_j(x)|^2\Big)^{1/2}\bigg)\cdot w(x)\,dx\\
&= C\cdot\frac{1}{\theta(w(B))}
\int_{2B}\Phi\bigg(\frac{\,1\,}{\sigma}\Big(\sum_{j}|f_j(x)|^2\Big)^{1/2}\bigg)\cdot w(x)\,dx\\
&= C\cdot\frac{\theta(w(2B))}{\theta(w(B))}\cdot\frac{1}{\theta(w(2B))}
\int_{2B}\Phi\bigg(\frac{\,1\,}{\sigma}\Big(\sum_{j}|f_j(x)|^2\Big)^{1/2}\bigg)\cdot w(x)\,dx.
\end{split}
\end{equation*}
Observe that
\begin{equation}\label{fact}
\frac{1}{\theta(w(B))}\leq\frac{1+\log^+\big(\frac{w(B)}{\theta(w(B))}\big)}{\theta(w(B))}=\frac{\Phi\big(\frac{w(B)}{\theta(w(B))}\big)}{w(B)}.
\end{equation}
Moreover, since $0<w(B)<w(2B)<+\infty$ when $w\in A_1$, then by the $\mathcal D_\kappa$ condition (\ref{D condition}) of $\theta$, the inequality (\ref{weights}) and the fact \eqref{fact}, we have
\begin{equation*}
\begin{split}
K'_1&\leq C\cdot\frac{w(2B)^\kappa}{w(B)^\kappa}\cdot\frac{1}{\theta(w(2B))}
\int_{2B}\Phi\bigg(\frac{\|\vec{f}(x)\|_{\ell^2}}{\sigma}\bigg)\cdot w(x)\,dx\\
&\leq C\cdot\sup_B\left\{\frac{1}{\theta(w(B))}
\int_{B}\Phi\bigg(\frac{\|\vec{f}(x)\|_{\ell^2}}{\sigma}\bigg)\cdot w(x)\,dx\right\}\\
&\leq C\cdot\sup_B\left\{\frac{\Phi\Big(\frac{w(B)}{\theta(w(B))}\Big)}{w(B)}
\int_{B}\Phi\bigg(\frac{\|\vec{f}(x)\|_{\ell^2}}{\sigma}\bigg)\cdot w(x)\,dx\right\}.
\end{split}
\end{equation*}
Recall that the following pointwise estimate holds for any $x\in B$,
\begin{equation*}
\begin{split}
\bigg(\sum_{j}\big|\big[b,\mathcal S_\alpha\big](f^\infty_j)(x)\big|^2\bigg)^{1/2}
&\leq\big|b(x)-b_B\big|\bigg(\sum_{j}\big|\mathcal S_\alpha(f^{\infty}_j)(x)\big|^2\bigg)^{1/2}\\
&+\bigg(\sum_{j}\Big|\mathcal S_\alpha\Big([b_{B}-b]f^\infty_j\Big)(x)\Big|^2\bigg)^{1/2}.
\end{split}
\end{equation*}
So we can divide the term $K'_2$ into two parts.
\begin{equation*}
\begin{split}
K'_2\leq&\frac{1}{\theta(w(B))}\cdot w\bigg(\bigg\{x\in B:\big|b(x)-b_{B}\big|\cdot
\bigg(\sum_{j}\big|\mathcal S_{\alpha}(f^{\infty}_j)(x)\big|^2\bigg)^{1/2}>\sigma/4\bigg\}\bigg)\\
&+\frac{1}{\theta(w(B))}\cdot w\bigg(\bigg\{x\in B:\bigg(\sum_{j}\Big|\mathcal S_\alpha\Big([b_{B}-b]f^\infty_j\Big)(x)\Big|^2\bigg)^{1/2}>\sigma/4\bigg\}\bigg)\\
:=&K'_3+K'_4.
\end{split}
\end{equation*}
Since $w\in A_1$, then there exists a number $r>1$ such that $w\in RH_r$. Hence, by using the previous pointwise estimate (\ref{key estimate1}), Chebyshev's inequality together with H\"older's inequality and John--Nirenberg's inequality (see \cite{john}), we conclude that
\begin{equation*}
\begin{split}
K'_3&\leq\frac{1}{\theta(w(B))}\cdot\frac{\,4\,}{\sigma}\int_B\big|b(x)-b_{B}\big|\cdot
\bigg(\sum_{j}\big|\mathcal S_{\alpha}(f^{\infty}_j)(x)\big|^2\bigg)^{1/2}w(x)\,dx\\
&\leq C\sum_{l=1}^\infty\frac{1}{|2^{l+1}B|}\int_{2^{l+1}B}\frac{\,1\,}{\sigma}\bigg(\sum_{j}\big|f_j(z)\big|^2\bigg)^{1/2}dz\\
&\times\frac{1}{\theta(w(B))}\cdot\left(\int_{B}\big|b(x)-b_{B}\big|^{r'}dx\right)^{1/{r'}}\left(\int_{B}w(x)^rdx\right)^{1/r}\\
&\leq C\sum_{l=1}^\infty\frac{1}{|2^{l+1}B|}\int_{2^{l+1}B}\frac{\|\vec{f}(z)\|_{\ell^2}}{\sigma}\,dz\times\frac{w(B)}{\theta(w(B))}.\\
\end{split}
\end{equation*}
It then follows from the $A_1$ condition and the fact \eqref{fact} that
\begin{equation*}
\begin{split}
K'_3&\leq C\sum_{l=1}^\infty\frac{1}{w(2^{l+1}B)}\int_{2^{l+1}B}\frac{\|\vec{f}(z)\|_{\ell^2}}{\sigma}\cdot w(z)\,dz\times\frac{w(B)}{\theta(w(B))}\\
&=C\sum_{l=1}^\infty\frac{1}{\theta(w(2^{l+1}B))}\cdot\frac{\theta(w(2^{l+1}B))}{w(2^{l+1}B)}\int_{2^{l+1}B}\frac{\|\vec{f}(z)\|_{\ell^2}}{\sigma}\cdot w(z)\,dz\times\frac{w(B)}{\theta(w(B))}\\
&\leq C\cdot\sup_B\left\{\frac{\Phi\Big(\frac{w(B)}{\theta(w(B))}\Big)}{w(B)}
\int_{B}\Phi\bigg(\frac{\|\vec{f}(z)\|_{\ell^2}}{\sigma}\bigg)\cdot w(z)\,dz\right\}\\
&\times\sum_{l=1}^\infty\frac{\theta(w(2^{l+1}B))}{\theta(w(B))}\cdot\frac{w(B)}{w(2^{l+1}B)}.
\end{split}
\end{equation*}
Substituting the previous inequality \eqref{theta2} into the term $K'_3$, we thus obtain
\begin{equation*}
K'_3\leq C\cdot\sup_B\left\{\frac{\Phi\Big(\frac{w(B)}{\theta(w(B))}\Big)}{w(B)}
\int_{B}\Phi\bigg(\frac{\|\vec{f}(z)\|_{\ell^2}}{\sigma}\bigg)\cdot w(z)\,dz\right\}.
\end{equation*}
On the other hand, applying the previous pointwise estimate (\ref{key estimate2}) and Chebyshev's inequality, we have
\begin{equation*}
\begin{split}
K'_4&\leq\frac{1}{\theta(w(B))}\cdot\frac{\,4\,}{\sigma}\int_B
\bigg(\sum_{j}\Big|\mathcal S_\alpha\Big([b_{B}-b]f^\infty_j\Big)(x)\Big|^2\bigg)^{1/2}w(x)\,dx\\
&\leq\frac{w(B)}{\theta(w(B))}\cdot\frac{C}{\sigma}\sum_{l=1}^\infty\frac{1}{|2^{l+1}B|}\int_{2^{l+1}B}
\big|b(z)-b_{B}\big|\cdot\bigg(\sum_{j}\big|f_j(z)\big|^2\bigg)^{1/2}dz\\
\end{split}
\end{equation*}
\begin{equation*}
\begin{split}
&\leq\frac{w(B)}{\theta(w(B))}\cdot\frac{C}{\sigma}\sum_{l=1}^\infty\frac{1}{|2^{l+1}B|}\int_{2^{l+1}B}
\big|b(z)-b_{2^{l+1}B}\big|\cdot\bigg(\sum_{j}\big|f_j(z)\big|^2\bigg)^{1/2}dz\\
&+\frac{w(B)}{\theta(w(B))}\cdot\frac{C}{\sigma}\sum_{l=1}^\infty\frac{1}{|2^{l+1}B|}\int_{2^{l+1}B}
\big|b_{2^{l+1}B}-b_B\big|\cdot\bigg(\sum_{j}\big|f_j(z)\big|^2\bigg)^{1/2}dz\\
&:=K'_5+K'_6.
\end{split}
\end{equation*}
For the term $K'_5$, we first use the generalized H\"older's inequality with weight (\ref{weighted holder}), (\ref{weighted exp}) and (\ref{equiv norm with weight}) together with the $A_1$ condition to obtain
\begin{equation*}
\begin{split}
K'_5&\leq \frac{C}{\sigma}\cdot\frac{w(B)}{\theta(w(B))}\sum_{l=1}^\infty\frac{1}{w(2^{l+1}B)}\int_{2^{l+1}B}
\big|b(z)-b_{2^{l+1}B}\big|\cdot\bigg(\sum_{j}\big|f_j(z)\big|^2\bigg)^{1/2}w(z)\,dz\\
&\leq \frac{C}{\sigma}\cdot\frac{w(B)}{\theta(w(B))}\sum_{l=1}^\infty\big\|b-b_{2^{l+1}B}\big\|_{\exp L(w),2^{l+1}B}
\bigg\|\bigg(\sum_{j}\big|f_j\big|^2\bigg)^{1/2}\bigg\|_{L\log L(w),2^{l+1}B}\\
&\leq \frac{C\|b\|_*}{\sigma}\cdot\frac{w(B)}{\theta(w(B))}\sum_{l=1}^\infty
\inf_{\eta>0}\left\{\eta+\frac{\eta}{w(2^{l+1}B)}\int_{2^{l+1}B}\Phi\bigg(\frac{\|\vec{f}(z)\|_{\ell^2}}{\eta}\bigg)\cdot w(z)\,dz\right\}.\\
\end{split}
\end{equation*}
Moreover, notice that the inequality $\Phi(a\cdot b)\leq\Phi(a)\cdot\Phi(b)$ holds for any $a,b>0$, when $\Phi(t)=t\cdot(1+\log^+t)$. For $l=1,2,\dots$, we may choose $\eta=\displaystyle\frac{\sigma\cdot\theta(w(2^{l+1}B))}{w(2^{l+1}B)}$ and then use the estimate \eqref{theta2} to obtain
\begin{equation*}
\begin{split}
K'_5&\leq \frac{C\|b\|_*}{\sigma}\cdot\frac{w(B)}{\theta(w(B))}\\
\times&\sum_{l=1}^\infty
\left\{\sigma\cdot\frac{\theta(w(2^{l+1}B))}{w(2^{l+1}B)}+
\frac{\sigma}{w(2^{l+1}B)}\cdot\frac{\theta(w(2^{l+1}B))}{w(2^{l+1}B)}\cdot\Phi\bigg(\frac{w(2^{l+1}B)}{\theta(w(2^{l+1}B))}\bigg)
\int_{2^{l+1}B}\Phi\bigg(\frac{\|\vec{f}(z)\|_{\ell^2}}{\sigma}\bigg)\cdot w(z)\,dz\right\}\\
&\leq C\|b\|_*\cdot\left[1+\sup_B\left\{\frac{\Phi\Big(\frac{w(B)}{\theta(w(B))}\Big)}{w(B)}
\int_{B}\Phi\bigg(\frac{\|\vec{f}(z)\|_{\ell^2}}{\sigma}\bigg)\cdot w(z)\,dz\right\}\right]\\
&\times\sum_{l=1}^\infty\frac{\theta(w(2^{l+1}B))}{\theta(w(B))}\cdot\frac{w(B)}{w(2^{l+1}B)}\\
&\leq C\cdot\sup_B\left\{\frac{\Phi\Big(\frac{w(B)}{\theta(w(B))}\Big)}{w(B)}
\int_{B}\Phi\bigg(\frac{\|\vec{f}(z)\|_{\ell^2}}{\sigma}\bigg)\cdot w(z)\,dz\right\}.
\end{split}
\end{equation*}
For the last term $K'_6$ we proceed as follows. An application of the inequality (\ref{BMO}) leads to that
\begin{equation*}
\begin{split}
K'_6&\leq C\cdot \frac{w(B)}{\theta(w(B))}\sum_{l=1}^\infty(l+1)\|b\|_*\cdot
\frac{1}{|2^{l+1}B|}\int_{2^{l+1}B}\frac{\|\vec{f}(z)\|_{\ell^2}}{\sigma}\,dz\\
&\leq C\cdot \frac{w(B)}{\theta(w(B))}\sum_{l=1}^\infty(l+1)\|b\|_*\cdot
\frac{1}{w(2^{l+1}B)}\int_{2^{l+1}B}\frac{\|\vec{f}(z)\|_{\ell^2}}{\sigma}\cdot w(z)\,dz\\
\end{split}
\end{equation*}
\begin{equation*}
\begin{split}
&\leq C\cdot\sup_B\left\{\frac{\Phi\Big(\frac{w(B)}{\theta(w(B))}\Big)}{w(B)}
\int_{B}\Phi\bigg(\frac{\|\vec{f}(z)\|_{\ell^2}}{\sigma}\bigg)\cdot w(z)\,dz\right\}\\
&\times\sum_{l=1}^\infty(l+1)\cdot\frac{\theta(w(2^{l+1}B))}{\theta(w(B))}\cdot\frac{w(B)}{w(2^{l+1}B)}.
\end{split}
\end{equation*}
Notice that $w\in A_1\subset A_\infty$, by using the $\mathcal D_\kappa$ condition (\ref{D condition}) of $\theta$ and the inequality (\ref{compare}) again together with the fact that $0\leq\kappa<1$, we thus have
\begin{align}\label{theta4}
\sum_{l=1}^\infty(l+1)\cdot\frac{\theta(w(2^{l+1}B))}{\theta(w(B))}\cdot\frac{w(B)}{w(2^{l+1}B)}
&\leq C\sum_{l=1}^\infty(l+1)\cdot\frac{w(B)^{1-\kappa}}{w(2^{l+1}B)^{1-\kappa}}\notag\\
&\leq C\sum_{l=1}^\infty(l+1)\cdot\left(\frac{|B|}{|2^{l+1}B|}\right)^{\delta^*(1-\kappa)}\notag\\
&\leq C\sum_{l=1}^\infty(l+1)\cdot\left(\frac{1}{2^{(l+1)n}}\right)^{\delta^*(1-\kappa)}\notag\\
&\leq C.
\end{align}
Substituting the above inequality \eqref{theta4} into the term $K'_6$, we finally obtain
\begin{equation*}
K'_6\leq C\cdot\sup_B\left\{\frac{\Phi\Big(\frac{w(B)}{\theta(w(B))}\Big)}{w(B)}
\int_{B}\Phi\bigg(\frac{\|\vec{f}(z)\|_{\ell^2}}{\sigma}\bigg)\cdot w(z)\,dz\right\}.
\end{equation*}
Summing up all the above estimates, we therefore conclude the proof of the main theorem.
\end{proof}

\newtheorem*{remark}{Further remark}
\begin{remark}
Let $p=1$, $0\leq\kappa<1$, $\theta$ satisfy the $\mathcal D_\kappa$ condition $(\ref{D condition})$ and $w$ be a weight function on $\mathbb R^n$. We denote by $\mathcal M^{1,\theta}_{L\log L}(w)$ the generalized weighted Morrey space of $L\log L$ type, the space of all locally integrable functions $f$ defined on $\mathbb R^n$ with finite norm $\big\|f\big\|_{\mathcal M^{1,\theta}_{L\log L}(w)}$.
\begin{equation*}
\mathcal M^{1,\theta}_{L\log L}(w):=\left\{f\in L^1_{loc}(w):\big\|f\big\|_{\mathcal M^{1,\theta}_{L\log L}(w)}<\infty\right\},
\end{equation*}
where $\Phi(t)=t\cdot(1+\log^+t)$ and
\begin{equation*}
\big\|f\big\|_{\mathcal M^{1,\theta}_{L\log L}(w)}:=\sup_B\left\{\frac{\Phi\Big(\frac{w(B)}{\theta(w(B))}\Big)}{w(B)}\int_B\big|f(x)\big|w(x)\,dx\right\},
\end{equation*}
or
\begin{equation*}
\big\|f\big\|_{\mathcal M^{1,\theta}_{L\log L}(w)}
:=\sup_B\left\{\frac{1+\log^+\Big(\frac{w(B)}{\theta(w(B))}\Big)}{\theta(w(B))}\int_B\big|f(x)\big|w(x)\,dx\right\}.
\end{equation*}
Obviously, we have $\mathcal M^{1,\theta}(w)\supseteq\mathcal M^{1,\theta}_{L\log L}(w)$ by the definition. Then the corresponding estimate in Theorem \ref{mainthm:5} reads
\begin{equation*}
\begin{split}
&\frac{1}{\theta(w(B))}\cdot w\bigg(\bigg\{x\in B:\bigg(\sum_{j}\big|\big[b,\mathcal S_{\alpha}\big](f_j)(x)\big|^2\bigg)^{1/2}>\sigma\bigg\}\bigg)
\leq C\cdot\left\|\Phi\bigg(\frac{\|\vec{f}\|_{\ell^2}}{\sigma}\bigg)\right\|_{\mathcal M^{1,\theta}_{L\log L}(w)}.
\end{split}
\end{equation*}
Roughly speaking, we can say that the vector-valued commutator generated with BMO function is bounded from $\mathcal M^{1,\theta}_{L\log L}(w)$ to $W\mathcal M^{1,\theta}(w)$ from the above definitions. In comparison with the conclusions of Theorems \ref{mainthm:4} and \ref{mainthm:5}, it is natural to ask the question whether or not this vector-valued commutator has a more refined estimate:
\begin{equation*}
\begin{split}
&\frac{1}{\theta(w(B))}\cdot w\bigg(\bigg\{x\in B:\bigg(\sum_{j}\big|\big[b,\mathcal S_{\alpha}\big](f_j)(x)\big|^2\bigg)^{1/2}>\sigma\bigg\}\bigg)
\leq C\cdot\left\|\Phi\bigg(\frac{\|\vec{f}\|_{\ell^2}}{\sigma}\bigg)\right\|_{\mathcal M^{1,\theta}(w)}.
\end{split}
\end{equation*}
By the technique used in this article, there is an ``extra" $\log^+(\cdot)$ term in the estimation of $K'_5$. Whether this term can be removed is not known in general.
\end{remark}

\end{document}